\documentclass[11pt,reqno]{amsart}
\usepackage{tikz,a4wide}
\usetikzlibrary{arrows.meta}
\usetikzlibrary{math,calc}
\usepackage{float}
\usepackage{amsmath}
\usepackage{xfrac}
\usepackage{stmaryrd}
\SetSymbolFont{stmry}{bold}{U}{stmry}{m}{n}
\usepackage{mathrsfs,bm,amsthm,mathtools,yfonts,amssymb,color}
\usepackage{esint}
\usepackage{xcolor}
\usepackage[pagebackref,citecolor=black,linkcolor=blue,colorlinks=true]{hyperref}
\usepackage{tikz-3dplot}
\usepackage{epstopdf}
\usepackage{cleveref}
\usepackage{csquotes} 
\MakeOuterQuote{"} 

\makeatletter 
\DeclarePairedDelimiter{\@tmpabs}{\lvert}{\rvert}
\newcommand{\@absstar}[1]{{\@tmpabs*{#1}}}
\newcommand{\@absnostar}[2][]{{\@tmpabs[#1]{#2}}}
\newcommand{\abs}{\@ifstar\@absstar\@absnostar}
\newcommand{\R}{\mathbb R}
\newcommand{\Capa}{\textrm{Cap}}
\renewcommand{\H}{\mathcal{H}}
\newcommand{\spt}{{\rm spt}\,}
\newcommand{\Z}{{\mathbb Z}}
\newcommand{\T}{{\mathbb T}}
\newcommand{\E}{{\mathbb E}}
\renewcommand{\exp}{{\rm exp}\,}
\makeatother

\makeatletter 
\DeclarePairedDelimiter{\@tmpnorm}{\lVert}{\rVert}
\newcommand{\@normstar}[1]{{\@tmpnorm*{#1}}}
\newcommand{\@normnostar}[2][]{{\@tmpnorm[#1]{#2}}}
\newcommand{\norm}{\@ifstar\@normstar\@normnostar}
\makeatother

\usepackage{cite}

\begingroup
\newtheorem{theorem}{Theorem}[section]
\newtheorem{lemma}[theorem]{Lemma}
\newtheorem{proposition}[theorem]{Proposition}
\newtheorem{corollary}[theorem]{Corollary}
\endgroup

\theoremstyle{definition}
\newtheorem{definition}[theorem]{Definition}
\newtheorem{remark}[theorem]{Remark}

\numberwithin{equation}{section}

\title{Level sets of fractional Sobolev functions}

\author[C. De Lellis]{Camillo De Lellis}
\address{School of Mathematics, Institute for Advanced Study, 1 Einstein Dr., Princeton NJ 08540, USA}
\address{Gran Sasso Science Institute, viale Francesco Crispi, 7, 67100 L’Aquila, Italy} 
\email{camillo.delellis@ias.edu}
\author[M. Chang]{Ming-Yuan Chang}
\address{Department of Mathematics, Princeton University, Washington Road, Princeton, NJ, 08540, USA}
\email{mc5366@princeton.edu}
\author[S. Mayboroda]{Svitlana Mayboroda}
\address{Department of Mathematics, ETH Z\"urich, R\"amistrasse 101, 8092 Z\"urich, Switzerland}
\address{School of Mathematics, Institute for Advanced Study, 1 Einstein Dr., Princeton NJ 08540, USA}
\email{svitlana.mayboroda@math.ethz.ch}

\begin{document}

\begin{abstract}
We prove a coarea-type result for scalar functions $f$ in fractional Sobolev spaces $W^{s, p} (\Omega)$ with $\Omega\subset \mathbb R^n$, $0<s<1$, and $1\leq p < \infty$. Our theorem shows that a.e. level set has zero Hausdorff $\mathcal{H}^{n-s}$ measure, where the level set $f^{-1} (y)$ is defined as the set all points at which $y$ is between the $\liminf$ and the $\limsup$ (as $r\downarrow 0$) of the averages of $f$ over the balls $B_r (y)$. A quite general construction of random series of wavelets shows also that with probability $1$ (many) level sets have indeed dimension $n-s$.  
\end{abstract}

\maketitle

\tableofcontents

\section{Introduction} 

In this note we are concerned with the dimension of the level sets of scalar functions $f$ in fractional Sobolev spaces $W^{s,p}$ with $0<s<1$ and $1\leq p \leq \infty$. Our motivation comes from turbulence theory. In particular there are experimental and numerical evidences that certain ``fronts'', which can be modeled as level sets of scalars advected by a turbulent flow, have certain specific fractal dimensions. Moreover there seems to be a relation between the dimension and the expected regularity of the advected scalar. In ``homogeneous'' turbulence it is expected that the advected scalar is H\"older continuous. However, the typical turbulence is ``intermittent'' and the expected regularity is rather in some (fractional) Sobolev class, thus motivating our interest. 

A folklore fact in the literature is that level sets of H\"older functions $C^s$ have codimension $s$, at least if both the function and level set are ``typical'' in some appropriate sense. 
A possible rigorous statement for a dimensional upper bound is a coarea-type inequality and in fact the following statement is a simple consequence of classical tools in geometric measure theory and follows from Federer \cite{Federer} (see Appendix \ref{a:coarea}). Here $[f]_s$ denotes the H\"older seminorm of $f$ and $\mathcal{H}^{n-s}$ the usual Hausdorff measure. 

\begin{theorem}\label{t:coarea-Hoelder}
For every $0\leq s \leq 1$ and every $1\leq n$ there is a constant $C(s,n)$ with the following property. If $Q_1 = (0,1)^n\subset \mathbb R^n$ and $f\in C^s (Q_1)$, then 
\begin{equation}\label{e:coarea-Hoelder}
\int_{\mathbb R} \mathcal{H}^{n-s} (\{f=y\})\, dy \leq C [f]_s\, .
\end{equation} 
\end{theorem}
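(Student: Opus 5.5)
We prove \eqref{e:coarea-Hoelder} with the classical covering argument behind Federer's coarea (Eilenberg) inequality, applied at a fixed dyadic scale and then passed to the limit. It suffices to bound the spherical-type premeasures $\mathcal{H}^{n-s}_\delta$ uniformly in $\delta$. Then Fatou's lemma, or monotone convergence as $\delta\downarrow 0$, upgrades the bound to $\mathcal{H}^{n-s}$. We may assume $[f]_s<\infty$; then $f$ extends uniformly continuously to $\overline{Q_1}$, and the level sets in $Q_1$ are only smaller than those of the extension.

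Fix $k\in\mathbb N$ and partition $Q_1$, up to a null set of boundaries that we handle by using closed cubes, into the $2^{nk}$ closed dyadic cubes $Q$ of side $2^{-k}$. Each has diameter $\sqrt n\, 2^{-k}$. For each such $Q$ the image $f(Q)$ is contained in an interval $I_Q$ of length at most
\[
\operatorname{osc}_Q f \le [f]_s (\sqrt n\, 2^{-k})^s .
\]
For every $y$, the level set $\{f=y\}$ is covered by the cubes $Q$ with $y\in I_Q$. Each of these has diameter $\sqrt n 2^{-k}=:\delta_k$, so
\[
\mathcal{H}^{n-s}_{\delta_k}(\{f=y\}) \le \omega_{n-s}\,\delta_k^{\,n-s}\sum_Q \mathbf 1_{I_Q}(y),
\]
with $\omega_{n-s}$ the normalizing constant. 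The function on the right is a simple Borel function of $y$. Integrating in $y$ gives
\[
\int_{\mathbb R}^* \mathcal{H}^{n-s}_{\delta_k}(\{f=y\})\,dy \le \omega_{n-s}\,\delta_k^{\,n-s}\, 2^{nk}\,[f]_s\,\delta_k^{\,s} = \omega_{n-s}\, n^{n/2}\,[f]_s .
\]
Here $\int^*$ denotes the upper integral, since measurability of $y\mapsto\mathcal{H}^{n-s}(\{f=y\})$ is not a priori clear. The powers of $2^{k}$ cancel exactly because the exponents $n-s$ and $s$ add up to $n$; this is the heart of the matter.

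Letting $k\to\infty$, the quantity $\mathcal{H}^{n-s}_{\delta_k}(\{f=y\})$ increases to $\mathcal{H}^{n-s}(\{f=y\})$ for every $y$. By the monotone convergence theorem, applied to the measurable majorants or equivalently via the upper integral, which is monotone-continuous for increasing sequences of nonnegative functions, we obtain \eqref{e:coarea-Hoelder} with $C(s,n)=\omega_{n-s}n^{n/2}$.

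The main obstacle is the measure-theoretic point of measurability in $y$. There are two ways to resolve it. One is to interpret the left side as an upper integral, which the argument above handles directly. The other is to quote Federer 2.10.25 (Eilenberg's inequality), of which this computation is essentially the proof: it gives $\int^* \mathcal{H}^{n-s}(A\cap f^{-1}(y))\,dy \le C\,\mathrm{Lip}$-type bounds for Lipschitz maps. To transfer that statement we would view $f$ as $1$-Lipschitz from the metric space $(Q_1, |x-x'|^s)$, whose $\mathcal{H}^n$ in the snowflake metric is $\mathcal{H}^{n/s}$-comparable. The exponent bookkeeping there is fiddly, so the direct dyadic computation above is preferable. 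If genuine measurability is desired, the dyadic-cover bound shows the lower semicontinuous envelopes are Borel, which suffices for Fatou. The endpoint $s=0$ is trivial, since $f$ then has finite range oscillation and $\mathcal{H}^n$ of the level sets sums to at most $1$. The endpoint $s=1$ is covered by the same computation.
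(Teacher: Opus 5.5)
Your proof is correct, but it takes a different route from the paper. The paper's appendix treats the statement as a black-box instance of Federer's Eilenberg inequality (Theorem 2.10.25):
\begin{itemize}
\item it gives $Q_1$ the snowflake distance $|p-q|^s$, for which $f$ becomes Lipschitz with constant $[f]_s$;
\item it uses that snowflake Hausdorff measures are multiples of Euclidean ones, $\bar{\mathcal{H}}^k = C\,\mathcal{H}^{sk}$;
\item it then reads off the bound with exponents $a=1$, $b=n/s-1$.
\end{itemize}
You instead carry out the covering argument behind 2.10.25 directly, and the special geometry makes it clean. Because the domain is a cube, the $2^{nk}$ closed dyadic cubes serve as the near-optimal covers at scale $\delta_k=\sqrt{n}\,2^{-k}$. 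The H\"older bound puts $f(Q\cap Q_1)$ in an interval $I_Q$ of length at most $[f]_s\delta_k^s$, and the cancellation $\delta_k^{n-s}\cdot\delta_k^{s}\cdot 2^{nk}=n^{n/2}$ does the rest. The passage $\delta_k\downarrow 0$ is justified by Fatou applied to the explicit Borel majorants $g_k(y)=\omega_{n-s}\delta_k^{n-s}\sum_Q \mathbf{1}_{I_Q}(y)$. Indeed, $\liminf_k g_k$ is a Borel majorant of $y\mapsto \mathcal{H}^{n-s}(\{f=y\})$ with integral at most $C[f]_s$. So your remark about ``lower semicontinuous envelopes'' is unnecessary, as is the extension of $f$ to $\overline{Q_1}$, which also fails for $s=0$.

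Your route is self-contained and gives an explicit constant. It needs no Hausdorff measure theory in general metric spaces, and it covers the endpoint $s=0$, where the snowflake distance degenerates and $n/s$ is undefined; the paper's argument as written only handles $0<s\le 1$. The paper's route is shorter once Federer is granted, and it explains the exponent $n-s$ conceptually: the inequality is literally the Lipschitz coarea inequality in the snowflaked space.
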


Clearly this is just an extension of the classical coarea formula for Lipschitz functions. Our generalization of the above theorem to the fractional Sobolev setting yields an interesting ``twist''. Before stating it we need to define properly the level set of $f$, which does not necessarily have a continuous representative. 

\begin{definition}\label{d:level-set}
Given an $L^1$ function $f: \mathbb R^n \supset \Omega \to \mathbb R$, with $\Omega$ open, we define its level set $\{f=y\}$ as the set of points $x$ such that 
\[
\liminf_{r\downarrow 0} \frac{1}{|B_r (x)|} \int_{B_r (x)} f \leq y
\leq \limsup_{r\downarrow 0} \frac{1}{|B_r (x)|} \int_{B_r (x)} f \, .
\] 
\end{definition}

\begin{theorem}\label{t:coarea-fractional}
Assume $0<s<1$, $1\leq p < \infty$, and $1\leq n$. If $\Omega\subset \mathbb R^n$ is open and $f\in W^{s,p} (\Omega)$, then
\begin{equation}\label{e:coarea-fractional}
\mathcal{H}^{n-s} (\{f=y\}) = 0 \qquad \mbox{for a.e. $y$}\, .
\end{equation}
\end{theorem}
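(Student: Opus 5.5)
The plan is a dyadic covering argument in the spirit of the classical proof that level sets of $W^{1,1}$ functions are $\mathcal{H}^{n-1}$-null for a.e. value. The role of $|Df|$ is played by the localized Gagliardo energies.

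\emph{Setup.} By exhausting $\Omega$ with countably many cubes we may assume that $\Omega$ contains a neighbourhood of $\overline{Q_1}$ and prove the claim for $\{f=y\}\cap Q_1$. For a dyadic cube $Q\subset Q_1$ of side $r_Q=2^{-j}$, let $Q^*$ be its concentric enlargement by a fixed factor (say $5$), so that $Q^*\subset\Omega$ and $Q^*$ contains every ball $B_\rho(x)$ with $x\in Q$ and $r_Q/2\le\rho\le r_Q$. Define
\[
E_Q:=\int_{Q^*}\int_{Q^*}\frac{|f(x)-f(z)|^p}{|x-z|^{n+sp}}\,dx\,dz .
\]
The enlarged cubes at a fixed scale have bounded overlap, so $e_j:=\sum_{r_Q=2^{-j}}E_Q\le C[f]^p_{W^{s,p}}$. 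More importantly, $e_j\to 0$ as $j\to\infty$, since the relevant domain of integration shrinks to the diagonal. By Jensen's inequality and $|x-z|\lesssim r_Q$ on $Q^*\times Q^*$,
\[
\delta_Q:=\frac{1}{|Q^*|}\int_{Q^*}|f-f_{Q^*}| \;\le\; C\bigl(r_Q^{sp-n}E_Q\bigr)^{1/p} .
\]
Every ball average $\frac{1}{|B_\rho(x)|}\int_{B_\rho(x)}f$ with $x\in Q$ and $\rho\in[r_Q/2,r_Q]$ differs from $f_{Q^*}$ by at most $C\delta_Q$. Let $I_Q$ be the interval centred at $f_{Q^*}$ with radius $C(\delta_Q+\delta_{\hat Q})+|f_{Q^*}-f_{\hat Q^*}|$, where $\hat Q$ is the parent of $Q$. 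Then $|I_Q|\lesssim \delta_Q+\delta_{\hat Q}$.

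\emph{Covering.} Fix $x\in\{f=y\}\cap Q_1$ and $k$, and let $Q_j(x)$ denote the dyadic cube of side $2^{-j}$ containing $x$. The function $\rho\mapsto\frac{1}{|B_\rho(x)|}\int_{B_\rho(x)}f$ is continuous, and by assumption its $\liminf$ and $\limsup$ bracket $y$. On each dyadic band $\rho\in[2^{-j-1},2^{-j}]$ this function takes values in $I_{Q_j(x)}$. If $y\notin I_{Q_j(x)}$ for every $j\ge k$, then the values on the bands cannot jump over $y$, because consecutive intervals overlap. Hence the ball averages stay on one side of $y$ at distance bounded below along all $j\ge k$. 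This contradicts the bracketing, except in the borderline case where the averages converge to $y$; but that case is again covered by $y\in I_{Q_j(x)}$ for all large $j$. Consequently
\[
\{f=y\}\cap Q_1\subset\bigcup_{j\ge k}\ \bigcup_{r_Q=2^{-j},\ y\in I_Q} Q ,
\]
so $\mathcal{H}^{n-s}_{\sqrt n 2^{-k}}(\{f=y\}\cap Q_1)\le C\sum_{j\ge k}\sum_{r_Q=2^{-j}} r_Q^{n-s}\mathbf 1_{I_Q}(y)$. Integrating in $y$ and applying Fatou's lemma as $k\to\infty$, the theorem follows once
\[
\sum_{j\ge k}\sum_{r_Q=2^{-j}} r_Q^{n-s}|I_Q| \longrightarrow 0 \quad (k\to\infty),
\]
because then $\int \mathcal{H}^{n-s}(\{f=y\}\cap Q_1)\,dy=0$. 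For $p=1$ this is immediate: $r_Q^{n-s}|I_Q|\lesssim E_Q+E_{\hat Q}$, so the sum is controlled by the tail $\sum_{j\ge k}e_j$ of a convergent series. This requires the (easy) refinement $\sum_j e_j\lesssim[f]^p_{W^{s,p}}$, obtained by attributing to scale $j$ only the pairs with $|x-z|\sim 2^{-j}$. That is, I will use the dyadic-annulus version of $E_Q$, which still controls $\delta_Q$ through a telescoping over finer scales.

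\emph{Main obstacle: $p>1$.} Hölder's inequality over the $2^{jn}$ cubes of scale $j$ only gives $\sum_Q r_Q^{n-s}|I_Q|\lesssim e_j^{1/p}$, and $\sum_j e_j^{1/p}$ need not converge. My first attempt is to exploit that the exponent $n-s$ is critical and to avoid paying for every cube. I would introduce a stopping-time selection: for each $x$ keep only the \emph{first} scale $j\ge k$ at which $y\in I_{Q_j(x)}$, and split cubes into those with $E_Q\le\varepsilon\, r_Q^{\,n-sp}$ and those with $E_Q>\varepsilon\, r_Q^{\,n-sp}$. The bad cubes are counted by Chebyshev: their number at scale $j$ is at most $\varepsilon^{-1}2^{j(n-sp)}e_j$, so they contribute at most $\varepsilon^{-1}\sum_{j\ge k}2^{-j(n-s)}2^{j(n-sp)}e_j$ to $\mathcal{H}^{n-s}$ for \emph{every} $y$. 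This is fine when $sp\ge s$, i.e.\ always, since $p\ge1$: the factor $2^{-js(p-1)}$ is at most $1$, so the bound is $\varepsilon^{-1}\sum_{j\ge k}e_j\to0$. On the good cubes, $|I_Q|\lesssim\varepsilon^{1/p}$ is uniformly small, and I would replace the crude bound $|I_Q|$ by a genuine $p$-th power estimate. The idea is to integrate $\mathbf 1_{I_Q}(y)$ against $\sum_Q r_Q^{n-s}$ and use $|I_Q|\le |I_Q|^p\varepsilon^{(1-p)/p}\cdot(\ldots)$ in the regime where the crossing intervals are small, aiming for a bound $\lesssim\sum_{j\ge k}e_j$ up to constants depending on $\varepsilon$. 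Making this last interpolation rigorous, so that the good cubes also yield a summable tail, is the step I expect to require the real work.
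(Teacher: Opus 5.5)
For $p=1$ your scheme is viable, and it even avoids the capacity result for non-Lebesgue points. With annular energies the telescoping gives $r_Q^{n-s}\delta_Q\lesssim\sum_{i\ge j}2^{-(i-j)s}\sum_{Q'\subset Q^*,\,r_{Q'}=2^{-i}}E^{\rm ann}_{Q'}$, whose sum over $j\ge k$ is $\lesssim\sum_{i\ge k}e^{\rm ann}_i\to0$. For $p>1$, however, which is the real content of the theorem, there is a genuine gap, and the route cannot be closed as you propose.

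The sufficient condition you reduce to, $\sum_{j\ge k}\sum_{r_Q=2^{-j}}r_Q^{n-s}|I_Q|\to0$, is in general false for $p>1$. Since $|I_Q|\gtrsim\delta_Q$, it is the tail of a $B^s_{1,1}$-type quantity, and $W^{s,p}=B^s_{p,p}$ is not locally contained in $B^s_{1,1}$. For instance, take a wavelet series whose coefficients at scale $j$ all have size $j^{-1}2^{-js}$. By Theorem~\ref{t:PL-wavelets} its $W^{s,p}$-energy is $\sim\sum_j j^{-p}<\infty$. On the other hand, testing $f-f_{Q^*}$ against one wavelet of comparable scale supported in $Q^*$ gives $\delta_Q\gtrsim j^{-1}2^{-js}$ for every cube of side $2^{-j}$. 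So scale $j$ contributes $\gtrsim 1/j$, and the tail is infinite for every $k$.

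In this example every cube is good for your splitting. The Chebyshev bound on bad cubes is correct but irrelevant here. The interpolation you propose on good cubes runs the wrong way: $|I_Q|\le|I_Q|^p\varepsilon^{(1-p)/p}$ holds exactly when $|I_Q|\ge\varepsilon^{1/p}$, i.e.\ only on bad cubes. The stopping-time selection never enters an estimate, so good cubes are still charged their full $|I_Q|$. A cost that is linear in the oscillation of $f$ on each cube cannot exploit $p$-th power integrability.

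What is missing is an estimate that localizes the energy in the \emph{range} of $f$ as well as in space. Then the energy is counted once across levels, instead of every level window being charged the whole oscillation. In the paper this is a nonlinear capacitary bound. Frostman's lemma and the $B^s_{p,q}$--$B^{-s}_{p',q'}$ duality give $\mathcal{H}^{n-s}_\infty(E)\lesssim\|u\|_{\dot W^{s,p}}\|u\|_{L^p}^{p-1}$ for $u\ge0$ with $\liminf_r u_{x,r}\ge1$ on $E$. This is applied (with fractional Poincar\'e) to the truncation $u=\varphi\bigl(\frac{x-x_0}{r}\bigr)\psi\bigl(\frac{\bar f(x)-y_0}{r^s}\bigr)$. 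The result bounds $\mathcal{H}^{n-s}_\infty\bigl(\{x\in B_r(x_0)\cap Z(f):|\bar f(x)-y_0|<r^s\}\bigr)$ by $Cr^{-s}\int(|D^{s,p}_{6r}f|^p+1)$, integrated only over the set where $\bar f$ lies in the enlarged window.

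The paper then concludes in three steps. Because windows of height $r^s$ have bounded overlap, covering $\Gamma(f)$ by cylinders $B_r(x_0)\times(y_0-r^s,y_0+r^s)$ sums these energies only once. Sending the localization scale to $0$ and rescaling by $\ell f(\cdot/\ell)$ (this is where $s<1$ enters) gives $\mathcal{H}^{n+1-s}(\Gamma(f))=0$. Federer's coarea inequality for the vertical projection, together with $\mathcal{H}^{n-s}(\Omega\setminus Z(f))=0$, then yields the theorem.

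Separately, your covering step has a hole even for $p=1$. The ball averages can converge to $y$ from one side while $y\notin I_{Q_j(x)}$ for every $j\ge k$: nothing prevents $|f_{Q_j^*}-\bar f(x)|\gg\delta_{Q_{j-1}}$ at all scales at a Lebesgue point. So that case is not ``again covered''. It can be repaired as follows. Use closed intervals, since $I_Q$ may be degenerate. Then $\bigcup_{j\ge k}I_{Q_j(x)}$ is an interval with endpoint $y$, so $x$ is covered at all levels in a one-sided neighbourhood of $y$. This costs nothing extra at Lebesgue points of $y\mapsto\sum_Q r_Q^{n-s}\mathbf{1}_{I_Q}(y)$.
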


A lower bound on the dimension of the level sets requires some ``typicality'' assumption. In fact a smooth function $g\in C^\infty (B_1)$ is certainly also a H\"older function in $C^s (B_1)$ for all $s$, and for $g$ the classical coarea formula implies that the level set $\{g=y\}$ has dimension at most $n-1$ for a.e. $y$. On the other hand for appropriate random Fourier series on the torus $\mathbb T^n$, it is known since the seventies that the level sets have the expected dimension. More precisely, there is a variety of ``natural'' probability measures $\mathbb P$ over continuous functions on $\mathbb R^n$ for which
\begin{itemize}
\item[(a)] almost surely the Fourier series belongs to $C^\alpha$ for every $\alpha <s$,
\item[(b)] almost surely it does not belong to $C^\alpha$ for every $\alpha>s$.
\end{itemize}
Under mild assumptions on $\mathbb P$ the monograph \cite{Kahane} of Kahane shows then that the Hausdorff dimension of almost all level sets of random Fourier series, properly normalized, is, $\mathbb P$-almost surely, precisely $n-s$. In Section \ref{s:random} we will provide an analogous statement for suitable random series of {\em wavelets} in the fractional Sobolev case. Using wavelets seems natural if one is interested in constructing probability measures with properties (a) and (b) when we substitute $C^s$ with $W^{s,p}$. It also goes well with intermittency. For the precise statements we need to introduce a few concepts and some terminology and we refer therefore to Section \ref{s:random}. However, a direct corollary of these constructions is that the dimension $n-s$ in Theorem \ref{t:coarea-fractional} is optimal for the ``typical level set of a typical $W^{s,p}$ function''. 

\subsection{Acknowledgments} The three authors acknowledge the support of the Simons Foundation through the Simons Initiative for the Geometry of Flows s (Grant Award ID BD-
Targeted-00017375 CDeL, SM).

\section{Hausdorff measure of the graph and preliminaries}

Theorem \ref{t:coarea-fractional} will in fact be inferred from a stronger result. In order to state it, it is convenient to introduce the precise representative $\bar f$ of the function $f$ in Theorem \ref{t:coarea-fractional}.

\begin{definition}\label{d:precise-representative}
Let $f$ be as in Theorem \ref{t:coarea-fractional}.
As usual, a Lebesgue point for $f$ is a point $x$ for which there exists a real number $\bar{f} (x)$ such that
\begin{equation}\label{e:Lebesgue}
\lim_{r\downarrow 0} \frac{1}{r^n} \int_{B_r (x)} |f(y)-\bar{f} (x)|\, dy = 0\, . 
\end{equation}
The set of Lebesgue points will be called $Z(f)$. 
\end{definition}

\begin{theorem}\label{t:graph}
Let $f$ be as in Theorem \ref{t:coarea-fractional} and denote by $\Gamma (f)$ be the graph of $\bar{f}$, i.e.
\[
\Gamma (f) := \bigl\{(x, \bar{f} (x)) : x\in Z (f) \bigr\} \subset \mathbb R^{n+1}\, .
\]
Then $\mathcal{H}^{n+1-s} (\Gamma (f)) = 0$.
\end{theorem}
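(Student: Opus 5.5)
We will prove the claim by a covering argument on dyadic cubes, using the Gagliardo energy $[f]_{s,p}^p=\iint_{\Omega\times\Omega} |f(x)-f(y)|^p|x-y|^{-n-sp}\,dx\,dy$. Since $\mathcal{H}^{n+1-s}$-null sets are preserved under countable unions, we may localize. Fix a cube $Q_0\Subset\Omega$ and a bounded range of values $|\bar f|\le M$. It then suffices to show $\mathcal{H}^{n+1-s}_\delta(\Gamma_0)\le \varepsilon$ for all small $\delta,\varepsilon$, where $\Gamma_0=\Gamma(f)\cap (Q_0\times[-M,M])$.

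For a dyadic cube $Q$ of side $\ell$ write $f_Q$ for the average and define the local oscillation energy
\[
E(Q):=\int_{Q}\int_{Q}\frac{|f(x)-f(y)|^p}{|x-y|^{n+sp}}\,dx\,dy \;\gtrsim\; \ell^{-n-sp}\int_Q\int_Q |f(x)-f(y)|^p \;\gtrsim\; \ell^{n-sp}\,\mathrm{osc}_p(Q)^p ,
\]
where $\mathrm{osc}_p(Q)=\bigl(\fint_Q|f-f_Q|^p\bigr)^{1/p}$. The column $Q\times\mathbb R$ intersects $\Gamma$ only at heights $\bar f(x)$, $x\in Q\cap Z(f)$. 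By the Lebesgue point property and telescoping over the dyadic chain $Q=Q_k\supset Q_{k+1}\supset\cdots\ni x$, we get $|\bar f(x)-f_Q|\le C\sum_{j\ge k}\mathrm{osc}_1(Q_j)$. This controls the vertical extent of the graph over $Q$ by a maximal-type quantity. The graph over $Q$ can then be covered by roughly $1+h/\ell$ cubes of side $\ell$, where $h$ is the vertical spread. The resulting contribution is about $\ell^{n+1-s}(1+h/\ell)=\ell^{n+1-s}+\ell^{n-s}h$. The first term sums to $\ell^{1-s}|Q_0|\to0$ when we use a uniform grid of size $\ell$. The issue is the second term.

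The key step is a stopping-time (Calderón–Zygmund type) selection. Fix a small $\eta>0$. Call a dyadic cube $Q$ \emph{bad} if $\mathrm{osc}_1(Q)>\eta\,\ell(Q)^{s}$, and \emph{good} otherwise. From the energy bound, a bad cube satisfies $\ell^{n}\lesssim \eta^{-p}E(Q)$, using $\mathrm{osc}_1\le\mathrm{osc}_p$ and $\ell^{n-sp}\eta^p\ell^{sp}=\eta^p\ell^n$. Points $x$ with infinitely many bad cubes in their chain form a set $S$. On $S$ we use a finer statement: a point lies in $S$ only if $\limsup_{r\to0} r^{sp-n}\int_{B_r}\int_{B_r}|f(x)-f(y)|^p|x-y|^{-n-sp}\,dx\,dy\ge c\eta^p$. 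By the standard density theorem for the absolutely continuous measure $\mu(A)=\int_A\int_\Omega(\dots)$, this gives $\mathcal{H}^{n-sp}(S)=0$, and hence $\mathcal{H}^{n-s}(S)=0$ when $p\ge1$ (after the appropriate comparison). The graph above $S$ is handled separately; see below. For $x\notin S$, the chain eventually consists only of good cubes. Then the telescoping sum is geometric, $\sum_{j\ge k}\eta 2^{-js}\lesssim \eta\ell^s$, so above each maximal good cube $Q$ the graph sits in a box of height $\lesssim\eta\ell^s$. Such a box is covered by $\lesssim 1+\eta\ell^{s-1}$ cubes of side $\ell$, at cost $\lesssim \ell^{n+1-s}+\eta\ell^n$. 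Summing over disjoint maximal cubes, after first refining all cubes to side at most $\delta$, the total is $\lesssim \delta^{1-s}|Q_0|+\eta|Q_0|$, which tends to $0$.

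The main obstacle is the exceptional set $S$ and the graph above it. Knowing only that $S$ is $\mathcal{H}^{n-s}$-small in the base does not directly bound $\mathcal{H}^{n+1-s}$ of the graph over it. For this we use the energy more carefully, covering bad cubes by their own vertical spread. Fix a bad cube $Q$ that is not contained in a larger bad cube of side at most $\delta$. Cover the graph above the good part of $Q$ as before. The genuinely problematic points are those whose chains have bad cubes at all scales. For these we sum $\ell^{n-s}h_Q$ with $h_Q$ controlled by $\sum\mathrm{osc}_1$ along the chain, and we use Hölder with $\ell^{n}\lesssim\eta^{-p}E(Q)$ and the bounded overlap of the $E(Q)$ at each scale. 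This should give a bound by $\sum_{Q\text{ bad},\,\ell(Q)\le\delta}E(Q)$ up to powers of $\ell$, which tends to $0$ as $\delta\to0$ by dominated convergence, since the diagonal energy near $|x-y|\lesssim\delta$ vanishes. I expect exactly this bookkeeping, matching the exponent $n+1-s$ against the $sp$ scaling when $p>1$, to be where the argument needs most care.
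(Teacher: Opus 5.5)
\textbf{There is a genuine gap, at exactly the point you flag: the exceptional set $S$ and the graph above it.}

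First, the density claim does not follow from your definition of bad cubes. The condition $\mathrm{osc}_1(Q)>\eta\,\ell(Q)^s$ only gives $E(Q)\gtrsim\eta^p\ell(Q)^n$. So for $x\in S$ you get $\limsup_{r\downarrow0}r^{-n}\int_{B_r(x)}\int_{B_r(x)}\frac{|f(y)-f(z)|^p}{|y-z|^{n+sp}}\,dy\,dz\ge c\eta^p$. This is a density with respect to $r^n$, not $r^{n-sp}$; the latter would require $\mathrm{osc}_1(Q)>\eta$. It yields only $|S|=0$, and $S$ can indeed be large. Put wavelet coefficients of size $2^{-ks}$ on the $\sim 2^{kt}$ generation-$k$ cubes of a Cantor set $K$ of dimension $t\in(n-s,n)$. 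This gives a continuous $f\in W^{s,p}$ (its wavelet norm is $\sim\sum_k 2^{k(t-n)}$). For small $\eta$, every point of $K$ lies in bad cubes at infinitely many scales, so $\mathcal{H}^{n-s}(S)=\infty$.

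Second, for the graph over $S$ you offer only the expectation that $\sum\ell^{n-s}h_Q$ ``should'' be bounded by $\sum_{Q\ \mathrm{bad},\,\ell(Q)\le\delta}E(Q)$. Nothing in the proposal relates $h_Q$, which is a supremum over the whole dyadic subtree below $Q$, to $E(Q)$. Moreover, the target sum is not controlled by the energy. Since $E(Q)$ carries the weight $|x-y|^{-n-sp}$, a pair $(x,y)$ is counted once for every dyadic $Q\ni x,y$ with $\ell(Q)\le\delta$, i.e.\ up to about $\log_2(\delta/|x-y|)$ times. Bounded overlap holds only scale by scale, and the repetition across scales happens precisely along nested chains of bad cubes, which is what $S$ consists of. A variant of the Cantor example with generation-$k$ energy $\sim k^{-2}$ gives $\sum_{\mathrm{bad}}E(Q)\gtrsim\sum_k k^{-1}=\infty$. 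So the dominated convergence step has nothing to act on.

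\textbf{What is missing is a mechanism that never sees vertical spreads; this is the core of the paper's proof.} The paper works with anisotropic cylinders $Q(z_0,r)=B_r(x_0)\times(y_0-r^s,y_0+r^s)$. It applies the capacity estimate of Corollary~\ref{c:capacity-estimate} (Frostman's lemma plus $B^s_{p,q}$--$B^{-s}_{p',q'}$ duality) to the \emph{vertically truncated} function $u=\varphi\bigl(\frac{x-x_0}{r}\bigr)\psi\bigl(\frac{\bar f(x)-y_0}{r^s}\bigr)$. This is Lemma~\ref{l:cylinder-estimate}: for $r<r_0$, $\mathcal{H}^{n-s}_\infty(\pi_o(\Gamma(f)\cap Q(z_0,r)))\lesssim r^{-s}\sigma(Q(z_0,2r))$, where $\sigma$ is the measure carried by the graph with density $|D^{s,p}_{6r_0}f|^p+1$. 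Since $\pi_o$ is injective on $\Gamma(f)$, $\sigma$ is additive over vertically stacked cylinders, so a tall graph over a small base costs nothing beyond $\sigma$. Then \eqref{e:Hausdorff-product} and a bounded-overlap covering give $\mathcal{H}^{n+1-s}_\infty(\Gamma(f)\cap C_\rho(x))\lesssim\sigma(C_{4\rho}(x))$. Letting $r_0\downarrow0$ leaves $C\rho^n$ uniformly in $f$, and the rescaling $f\mapsto\ell f(\cdot/\ell)$ with $s<1$ concludes.

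Your treatment of the points outside $S$ can be repaired. The stopping cubes of different chains are nested rather than disjoint, but each one below the top scale has a bad parent. Also $\sum_{\mathrm{bad},\,\ell(Q)\le\delta}\ell(Q)^n\le\eta^{-p}\sum_{\ell(Q)\le\delta}\ell(Q)^{-n-sp}\int_Q\int_Q|f(x)-f(y)|^p$. This is bounded by the energy on $\{|x-y|\le\sqrt n\,\delta\}$, because these weights form a geometric series in the scale; then let $\delta\downarrow0$ with $\eta$ fixed. On $S$, however, you need a sharp, vertically localized capacity estimate of the above type, and the proposal does not supply one.
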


The latter result is stronger than Theorem \ref{t:coarea-fractional}.

\begin{proof}[Proof of Theorem \ref{t:coarea-fractional}] 
It is well known that $\mathcal{H}^{n-s} (\Omega \setminus Z(f)) =0$, cf. Theorem \ref{t:capacity} and Remark \ref{r:capacity-Hausdorff} below. On the cylinder $\Omega \times \mathbb R$ we introduce the orthogonal projection $\pi$ on the vertical axis, which is a Lipschitz map with Lipschitz constant equal $1$. Observe that 
\[
\{f=y\} \subset (\Omega\setminus Z(f)) \cup (\pi^{-1} (\{y\}) \cap \Gamma (f))\, .
\]  
It suffices therefore to show that 
\begin{equation}\label{e:intermedia}
\mathcal{H}^{n-s} (\Gamma (f) \cap \pi^{-1} (\{y\})) = 0 \qquad \mbox{for a.e. $y$.}
\end{equation}
However, we can use Federer's coarea inequality for Lipschitz maps (see \cite[Theorem 2.10.25]{Federer}) to bound
\begin{equation}\label{e:coarea-ineq-Fed}
\int_{\mathbb R} \mathcal{H}^{n-s} (\Gamma (f) \cap \pi^{-1} (\{y\}))\, dy 
\leq {\rm Lip}\, (\pi) \mathcal{H}^{n+1-s} (\Gamma (f))\, .
\end{equation}
Therefore \eqref{e:intermedia} follows immediately from Theorem \ref{t:graph}.
\end{proof}

The next sections will be dedicated to prove Theorem \ref{t:graph}. In the remaining part of this section we will recall some known facts about Sobolev-Slobodeckij and Besov spaces and the corresponding capacities.

\subsection{Sobolev-Slobodeckij and Besov spaces} We start by recalling 
the definition of the Sobolev-Slobodeckij spaces.

\begin{definition}\label{d:Sob-Slob}
Consider $s\in(0,1)$, $p\in [1,\infty)$ and an open set $\Omega\subset \mathbb R^n$. $W^{s,p} (\Omega)$ is the subset of $L^p (\Omega)$ for which the following seminorm is finite:
\[
\|f\|_{\dot{W}^{s,p}(\Omega)}=\left(\int_\Omega\int_\Omega\frac{|f(y)-f(z)|^p}{|y-z|^{sp+n}}\,dy\,dz\right)^{1/p}.
\]
On $\mathbb R^n$ it is also useful to introduce the space
$\dot{W}^{s,p}(\R^n)$, which is defined to be the closure of $C^{\infty}_c(\R^n)$ under the $\dot{W}^{s,p} (\mathbb R^n)$ norm.
\end{definition}

\begin{remark}\label{r:extension}
We recall that, if $\Omega$ is a bounded open set with sufficiently regular boundary, then $f\in W^{s,p} (\Omega)$ if and only $f\in L^p (\Omega)$ and has an extension $\tilde{f}\in \dot{W}^{s,p} (\mathbb R^n)$. 
\end{remark}

Moreover, it is convenient to introduce the functions
\begin{align}
D^{s,p}f(x) &:=\left(\int_{\R^n}\frac{|f(x+h)-f(x)|^p}{|h|^{sp+n}}\,dh\right)^{1/p},\label{eq:def of fractional deriavitve}\\
D^{s,p}_rf(x)&:=\left(\int_{|h|\le r}\frac{|f(x+h)-f(x)|^p}{|h|^{sp+n}}\,dh\right)^{1/p}\, .\label{eq:def of local fractional deriavitve}
\end{align}
The motivation for introducing the first function is that $\|f\|_{\dot{W}^{s,p} (\mathbb R^n)} = \|D^{s,p} f\|_{L^p (\mathbb R^n)}$, while clearly the second one is a ``local'' version of the first which is useful when the domain of $f$ is not the entire space. 

As $W^{s,p} (\mathbb R^n) =B^s_{p,p} (\mathbb R^n)$ is part of the Besov spaces family, we will also consider some statements for the Besov spaces $B^s_{pq}$ with $s\in \mathbb R$, $1\le p<\infty$, and $1\le q\le \infty$.

\begin{definition}
The Besov norms $B^s_{pq} (\mathbb R^n)$ and $\dot{B}^s_{pq} (\mathbb R^n)$ are defined as
\begin{align*}
\|f\|_{B^{s}_{pq} (\mathbb R^n)}&:=\left(\|P_{\le -1}f\|^q+\sum_{k=0}^\infty \|2^{sk}P_kf\|_p^q\right)^{1/q},\\
\|f\|_{\dot{B}^s_{pq} (\mathbb R^n)}&:=\left(\sum_{k=-\infty}^\infty \|2^{sk}P_kf\|_p^q\right)^{1/q},
\end{align*}
where $P_k$, $k\in \mathbb Z$, are the usual dyadic Littlewood-Paley projections, and the case $q=\infty$ is interpreted as an $l^\infty$ norm in $k$.
\end{definition}

\begin{remark}\label{r:duality}
In our considerations we will use the duality pairing inequality for $B^s_{pq}$ and 
$B^{-s}_{p^\prime q^\prime}$, and the corresponding ones for $\dot{B}^s_{pq}$ and $\dot{B}^{-s}_{p^\prime q^\prime}$ (as usual $p^\prime$ and $q^\prime$ are the dual exponents namely $\frac{1}{p}+\frac{1}{p^\prime}=1 = \frac{1}{q} +\frac{1}{q^\prime}$). In particular, if both $f$ and $g$ are Schwartz functions, we have the inequality
\begin{equation}\label{e:duality}
\int fg \leq \|f\|_{B^s_{pq}} \|g\|_{B^{-s}_{p^\prime q^\prime}}, 
\end{equation}
(and the corresponding one with $\dot{B}^s_{pq}$ and $\dot{B}^{-s}_{p^\prime q^\prime}$).
We do not delve into the functional analytic tools and the theory of distributions needed to make sense of \eqref{e:duality} when $f$ and $g$ are less regular. We will however need \eqref{e:duality} when $f$ is taken to be continuous and $g$ a Borel measure. The inequality can then be easily justified through a standard approximation procedure.
\end{remark}

When $0<s<1$, a set of equivalent norms for $\dot{B}^s_{pq}$ (which by abuse of notation we keep denoting as $\|\cdot\|_{\dot{B}^s_{pq} (\mathbb R^n)}$) is given by
\[
\|f\|_{\dot{B}^s_{pq}}=\left(\int_{\R^n}\left(\int_{\R^n}|f(x+h)-f(x)|^p\,dx\right)^{q/p}\,\frac{dh}{|h|^{sq+n}}\right)^{1/q},
\]
again with the proper interpretation when $q=\infty$.
Clearly the above  $\dot{B}^s_{pp}$-norm is exactly the $\dot{W}^{s,p}$-norm.

\subsection{Besov capacity} We next introduce the Besov capacity. 

\begin{definition} Let $s>0$, $1\le p <\infty$, and $1\leq q \leq \infty$. Then, given a bounded set $E\subset \mathbb R^n$, 
\begin{equation}\label{e:capacity}
\Capa_{spq} (E) =\inf \left\{ \|f\|^p_{B^s_{p,q}}: f\in B^s_{pq} (\mathbb R^n) \quad\mbox{s.t.} \quad E\subset\{f\ge1\}^\circ\right\}\, ,
\end{equation}
where $\Omega^\circ$ denotes the interior of a set $\Omega$. 
\end{definition}

For the following theorem we refer to \cite{Dor87} for a proof and \cite{Net89} for another proof that contains also the quasi-Banach cases.

\begin{theorem}\label{t:capacity}
Let $s>0$, $1\le p<\infty$, and $1\le q\le \infty$. Let $f\in B^s_{p,q}(\R^n)$ and let $Z(f)$ be the set of Lebesgue points of $f$.
Then $\Capa_{s,p,q} (\R^n\setminus Z(f)) = 0$. 
\end{theorem}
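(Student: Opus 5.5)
The plan is the classical capacitary weak-type argument of Wiener/Meyers type. The statement is cited from \cite{Dor87, Net89}. Still, I would prove it with three ingredients: density of smooth functions in $B^s_{pq}$, a capacitary maximal inequality, and a Borel--Cantelli-type approximation. Since Lebesgue points are local and $\Capa_{spq}$ is countably subadditive, I may reduce to $f$ with compact support. Subadditivity follows from $\max(g_1,g_2,\dots)\le(\sum g_i^p)^{1/p}$-type bounds, or simply $\sum g_i$ with a quasi-norm constant.

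\textbf{Step 1: maximal inequality.} Let $Mg$ be the Hardy--Littlewood maximal function. For $g\in B^s_{pq}$ I would show
\[
\Capa_{spq}(\{Mg>\lambda\})\le C\lambda^{-p}\|g\|^p_{B^s_{pq}}.
\]
The natural route is the following. Observe that $Mg\le M|g|$. The map $g\mapsto |g|$ is bounded on $B^s_{pq}$ when $0<s<1$ (difference characterization). For larger $s$ one replaces it by a nonlinear potential argument. Next, $\{M|g|>\lambda\}$ is open, so one needs a single test function $\ge 1$ on it. I would use a Hedberg/Wolff-type estimate to control $M|g|$ pointwise by a Bessel-potential representation $G_s*\mu$-type function. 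Alternatively, for $0<s<1$, I would use directly the fact that $M$ (or a smooth maximal substitute) is bounded on $B^s_{pq}$. Then $h=\lambda^{-1}M|g|$ works, up to taking lower semicontinuous regularizations so that $\{h>1\}\subset\{h\ge 1\}^\circ$; this is automatic if $h$ is lower semicontinuous. Boundedness of $M$ on $B^s_{pq}$ for $0<s<1$ follows from the difference norm: $|Mg(x+h)-Mg(x)|\le M(\tau_hg-g)(x)$, and then $L^p$ boundedness of $M$ for $p>1$. For $p=1$ or $s\ge1$ I would instead invoke the potential-theoretic estimate. \textbf{This step is the main obstacle}, particularly the endpoint $p=1$ and $s\ge 1$.

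\textbf{Step 2: approximation.} Pick $g_k\in C^\infty_c$ with $\|f-g_k\|_{B^s_{pq}}\le 4^{-k}$. Set $E_k=\{M(f-g_k)>2^{-k}\}$. By Step 1, $\Capa(E_k)\le C2^{-kp}$. Hence $F_j=\bigcup_{k\ge j}E_k$ has capacity $\le C2^{-jp}$, and $F=\bigcap_jF_j$ has zero capacity. Any set of zero capacity is in particular Lebesgue null, via $|\{h\ge1\}|\le\|h\|_p^p$.

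\textbf{Step 3: conclusion.} For $x\notin F_j$ and $k\ge j$, we have $\sup_r \frac{1}{|B_r|}\int_{B_r(x)}|f-g_k|\le 2^{-k}$. Hence the continuous values $g_k(x)$ form a Cauchy sequence, with limit $\bar f(x)$. Moreover,
\[
\frac1{|B_r|}\int_{B_r(x)}|f-\bar f(x)|\le 2^{-k}+\frac1{|B_r|}\int_{B_r(x)}|g_k-g_k(x)|+|g_k(x)-\bar f(x)|.
\]
Letting $r\downarrow0$ and then $k\to\infty$ shows that $x\in Z(f)$. Thus $\R^n\setminus Z(f)\subset F$, which has zero capacity.
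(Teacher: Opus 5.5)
The paper does not prove this statement; it quotes it from the literature. So the comparison is with the standard Meyers/Adams--Hedberg argument, which is what you reproduce. For $1<p<\infty$, $0<s<1$, $1\le q<\infty$ your proof is complete. $M$ is bounded on $B^s_{p,q}$ by the difference characterization. The function $\sum_{k\ge j}2^kM(f-g_k)$ is a nonnegative lower semicontinuous competitor for $F_j$ with norm $\lesssim 2^{-j}$. Step 3 is correct. But two cases of the statement are left open, and they are not cosmetic.

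\textbf{(1) $p=1$.} Step 1 rests on $L^p$-boundedness of $M$, which fails at $p=1$. In fact $Mg\notin L^1$ for every $g\neq0$, so $\lambda^{-1}Mg$ is not even an admissible competitor, and $|Mg(x+h)-Mg(x)|\le M(\tau_hg-g)(x)$ gives no $L^1$ bound. ``Invoke the potential-theoretic estimate'' leaves exactly this case unproved, and it is the one the paper needs. In Remark \ref{r:capacity-Hausdorff}, for $p>1$ the bound $\dim_H E\le n-sp<n-s$ already gives $\H^{n-s}(E)=0$. For $W^{s,1}=B^s_{1,1}$, by contrast, one needs the endpoint implication $\Capa_{s,1,1}(E)=0\Rightarrow\H^{n-s}(E)=0$ for $E=\R^n\setminus Z(f)$.

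\textbf{(2) $q=\infty$.} Step 2 uses density of $C^\infty_c$, which fails in $B^s_{p,\infty}$. If $\limsup_k 2^{sk}\|P_kf\|_p>0$, then every $g\in C^\infty_c$ has $\|f-g\|_{B^s_{p,\infty}}\ge\limsup_k 2^{sk}\|P_kf\|_p$.

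Both gaps close if you replace $M$ by a majorant adapted to the Littlewood--Paley scales. Let $u_k=P_ku$, with the low-frequency block as $k=0$. Let $u_k^*(x)=\sup_y|u_k(x-y)|(1+2^k|y|)^{-a}$ be the Peetre maximal functions with $a>n/p$; they are bounded on $L^p$ for every $p\ge1$ because $u_k$ is band-limited. Let $\varphi_j=2^{jn}\varphi(2^j\cdot)$ with $\varphi\ge\mathbf{1}_{B_1}$ smooth. Set
\[
H:=\sum_k u_k^*+\sum_{j\ge 0}\sum_{k\ge j}\varphi_j*|u_k|\, .
\]
$H$ is lower semicontinuous and dominates, up to a constant, the averages of $|u|$ on all balls of radius $<1$. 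It also satisfies $\|\tau_hH-H\|_p\lesssim\sum_j\min(1,2^j|h|)\sum_{k\ge j}\|u_k\|_p$. The cross-scale weights $2^{-(k-j)s}$ are summable, which is where $s>0$ enters. Hence $\|H\|_{B^s_{p,q}}\lesssim\|u\|_{B^s_{p,q}}$ for $0<s<1$, all $p\ge1$ and all $q\le\infty$.

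Moreover, $H(x)<\infty$ forces $x\in Z(u)$ with $\bar u(x)=\sum_k u_k(x)$. For $r\approx 2^{-K}$, the blocks $k\le K$ oscillate on $B_r(x)$ by $\lesssim 2^{k-K}u_k^*(x)$. The blocks $k>K$ contribute $\lesssim\sum_{k\ge K}\varphi_K*|u_k|(x)$, which tends to $0$. So $\R^n\setminus Z(u)\subset\{H>\lambda\}$ for every $\lambda$, and its capacity is at most $\lambda^{-p}\|H\|^p_{B^s_{p,q}}\to0$. No density step is needed.

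For $s\ge1$, your Hedberg/Wolff remark should be made precise by lifting. Write $f=G_{s-\varepsilon}*g$ with $g\in B^\varepsilon_{p,q}$ and $0<\varepsilon<1$, and use $G_{s-\varepsilon}*H_g$. Since $G_{s-\varepsilon}\ge0$, it dominates the local maximal function of $f$, and its $B^s_{p,q}$-norm is $\lesssim\|g\|_{B^\varepsilon_{p,q}}$. Again, finiteness of $G_{s-\varepsilon}*H_g$ at $x$ gives $x\in Z(f)$ by dominated convergence.
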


\begin{remark}\label{r:capacity-Hausdorff}
Theorem \ref{t:capacity} implies immediately that 
\begin{equation}\label{e:good-points}
\mathcal{H}^{n-s} (\Omega \setminus Z(f)) =0 \qquad \mbox{for every $f\in W^{s,p} (\Omega)$ with $0<s<1$ and $1\leq p \leq \infty$}.
\end{equation}
Since this can be reduced to a local statement, we can assume that $\Omega$ is bounded and smooth and, therefore (thanks to the extension theorem, cf. Remark \ref{r:extension}), that it is the whole space $\mathbb R^n$. Next recall that:
\begin{itemize}
\item if $p>1$, then $\Capa_{s,p,q} (E) =0$ implies ${\rm dim}_H (E) \leq n -sp$;
\item $\Capa_{s,1,1} (E) =0$ implies $\mathcal{H}^{n-s} (E) =0$.
\end{itemize}
These two facts are indeed (modulo technicalities) a consequence of Theorem \ref{t:capacity-estimate} below.
Given that $W^{s,p} (\mathbb R^n) = B^s_{pp} (\mathbb R^n)$, we immediately conclude \eqref{e:good-points}.
\end{remark}

\section{Capacity estimate}

The following result in potential theory is well known, see \cite{Ada89} for an excellent exposition. Since our argument relies heavily on this estimate, we will present the proof. 

\begin{theorem}\label{t:capacity-estimate}
Let $s\ge0$, $1\le p<\infty$, $1\le q \le \infty$, and $m\ge 0$. Let $f\in B^{s}_{p,q}(\R^n)$ with $f\ge0$ and $\liminf_{r\downarrow0}f_{x,r}\ge 1$ on a Borel set $E$. Then there is a geometric constant $C=C(s,p,q,n,m)$ such that:
\begin{itemize}
\item[(a)] If $m>n-sp$,
\begin{equation}\label{e:capacity-estimate-1}
\H^m_\infty(E)\le C\|f\|^p_{B^s_{p,q}}\, ;
\end{equation}
\item[(b)] If $q=1$,
\begin{equation}\label{e:capacity-estimate-2}
\H^{n-s}_{\infty}(E)\le C\|f\|^p_{\dot{B}^s_{p,1}}\, .
\end{equation}
\end{itemize}
\end{theorem}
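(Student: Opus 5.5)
The plan is a duality argument combining Frostman's lemma with a Littlewood--Paley decomposition of $f$. By inner regularity of $\H^m_\infty$ on Borel (Suslin) sets, it suffices to prove the bounds for compact $K\subset E$. By Frostman's lemma, if $\H^m_\infty(K)>0$ there is a positive Borel measure $\mu$ supported on $K$ with
\[
\mu(B_r(x))\le r^m \quad \text{for all } x,r, \qquad \mu(K)\ge c\,\H^m_\infty(K).
\]
Fix $x\in K$ and write $f_{x,r}$ for the ball average. The idea is to test $f$ against $\mu$ at all scales simultaneously. Since $\liminf_r f_{x,r}\ge1$ on $K$, Fatou's lemma gives $\mu(K)\le \int\liminf_{k} (f*\phi_{2^{-k}})\,d\mu$. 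Here $\phi_r$ is the normalized indicator of $B_r$, or a smooth positive mollifier dominating it up to constants; since $f\ge0$, averages over balls are controlled by smooth averages at a comparable scale.

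Next I decompose
\[
f*\phi_{2^{-k}}=P_{\le -1}f*\phi_{2^{-k}}+\sum_{j\ge0}P_jf*\phi_{2^{-k}}.
\]
For $j\le k$ the term $P_jf*\phi_{2^{-k}}$ is essentially $P_jf$. For $j>k$ the mollifier kills it with decay $2^{-N(j-k)}$. This gives the pointwise domination
\[
\sup_k |f*\phi_{2^{-k}}|\lesssim \sum_{j\ge -1}\mathcal M_j(x),
\]
where $\mathcal M_j$ is an average of $|P_jf|$ against a kernel of scale $2^{-j}$ with rapid decay (a Peetre-type maximal function). Integrating against $\mu$ then gives
\[
\mu(K)\lesssim\sum_j\int |P_jf|\,(\mu*\psi_{2^{-j}})\,dx .
\]

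For each $j$ I apply Hölder: $\int|P_jf|\,\mu*\psi_{2^{-j}}\le \|P_jf\|_p\,\|\mu*\psi_{2^{-j}}\|_{p'}$. The Frostman bound gives $\|\mu*\psi_{2^{-j}}\|_\infty\lesssim 2^{j(n-m)}$ and $\|\mu*\psi_{2^{-j}}\|_1\le\mu(K)$. Interpolating,
\[
\|\mu*\psi_{2^{-j}}\|_{p'}\lesssim 2^{j(n-m)/p}\,\mu(K)^{1/p'}.
\]
Hence
\[
\mu(K)\lesssim \mu(K)^{1/p'}\sum_j 2^{sj}\|P_jf\|_p\; 2^{j(n-m-sp)/p}.
\]

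In case (a), $m>n-sp$, so the factor $2^{j(n-m-sp)/p}$ is summable. By Hölder in $j$ the sum is bounded by $\|f\|_{B^s_{pq}}$ for any $q\le\infty$. In case (b), $m=n-s$, I work with the homogeneous sum over all $j\in\Z$ (with $P_{\le -1}$ replaced by the low-frequency dyadic blocks), and I restrict the Frostman measure to balls of radius $\le$ the diameter, so that for $p=1$ the exponent is exactly zero. The sum is then $\|f\|_{\dot B^s_{1,1}}$, and $p=1$ gives the claimed linear bound.

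In both cases we get $\mu(K)^{1/p}\lesssim\|f\|$, i.e. $\H^m_\infty(K)\lesssim\|f\|^p$.

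The main obstacle I expect is part (b) for general $p>1$. There the interpolation exponent $2^{j(n-m-sp)/p}$ with $m=n-s$ equals $2^{js(1-p)/p}$, which decays at high frequencies but grows at low frequencies, so the homogeneous norm must be handled with care. I would first try to treat low frequencies using $\|\mu*\psi\|_{p'}\le\|\mu*\psi\|_\infty^{1/p}\mu(K)^{1/p'}$ together with the total-mass bound $\mu(K)\le \mathrm{diam}(K)^{m}$, reducing by scaling to $\mathrm{diam}\,K\le1$. A secondary technical point is justifying the Fatou/maximal-function step rigorously; for that I would use that $\mu$ is finite and that $f\ge0$.
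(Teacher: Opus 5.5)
Your argument for (a), and for (b) when $p=1$, is correct, but it is organized differently from the paper's. The paper uses the abstract pairing $\int f\,d\mu\le\|f\|_{B^s_{pq}}\|\mu\|_{B^{-s}_{p'q'}}$ and bounds $\|P_k\mu\|_\infty\lesssim 2^{(n-m)k}$ and $\|P_k\mu\|_1\lesssim\mu(E)$ via Frostman, then interpolates. Because the pairing is only justified for continuous $f$, it then needs a separate approximation step: radial mollification (so that $\liminf_\ell f_\ell\ge1$ on $E$, using $f\ge0$), truncation of $\mu$ to $B_R$, Fatou, and $\|f_\ell\|_{B^s_{pq}}\le\|f\|_{B^s_{pq}}$.

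You effectively prove the pairing by hand, in the form needed. First you show the pointwise domination $\sup_k f*\phi_{2^{-k}}\lesssim\sum_j|P_jf|*\Psi_{2^{-j}}$. Then you apply H\"older scale by scale, with the positive majorant $\mu*\Psi_{2^{-j}}$ in place of $P_j\mu$. The exponents are identical to the paper's. The gain is that you work directly with the $\liminf$ of averages of a discontinuous $f$, so no approximation step is needed. The cost is the standard kernel estimates for $\check\phi_j*\phi_{2^{-k}}$: rapid decay in $j-k$ for $j>k$, and for $j\le k$ domination by a scale-$2^{-j}$ kernel via $P_j=\tilde P_jP_j$. This is why the smooth mollifier, not the indicator, is the right choice.

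In the homogeneous sum you also need two small facts you did not state. First, the Frostman bound at \emph{large} radii (used for $j<0$). It holds automatically, so there is nothing to restrict. Second, the pointwise identity $f*\phi_{2^{-k}}=\sum_{j\in\Z}P_jf*\phi_{2^{-k}}$, which follows from $\|P_{\le -J}f\|_\infty\lesssim 2^{-Jn/p}\|f\|_p\to0$.

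Your plan for what you call the main obstacle, (b) with $p>1$, cannot succeed, because (b) as written (with $\H^{n-s}_\infty$) is false for $p>1$. Take $f\in C^\infty_c(\R^n)$ with $f\ge0$ and $f\ge1$ on $B_1$, and set $f^\lambda=f(\cdot/\lambda)$, $E=B_\lambda$. The left side equals $\lambda^{n-s}\H^{n-s}_\infty(B_1)$, while $\|f^\lambda\|^p_{\dot B^s_{p,1}}\approx\lambda^{n-sp}\|f\|^p_{\dot B^s_{p,1}}$. The ratio grows like $\lambda^{s(p-1)}\to\infty$.

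So the step ``reduce by scaling to $\mathrm{diam}\,K\le1$'' is exactly what breaks: the inequality is not dilation invariant when $p>1$. For $\mathrm{diam}\,K\le1$ it is true, but only because then $\H^{n-s}_\infty(K)\lesssim\H^{n-sp}_\infty(K)$.

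What the paper's argument actually proves in (b) is the critical bound with $m=n-sp$, namely $\H^{n-sp}_\infty(E)\lesssim\|f\|^p_{\dot B^s_{p,1}}$. This agrees with the displayed statement only for $p=1$, which is the only case used afterwards (Corollary~\ref{c:capacity-estimate}). In your framework the critical bound is immediate. For $m=n-sp$ the factor $2^{j(n-m-sp)/p}$ equals $1$ for every $j\in\Z$, so the sum is exactly $\|f\|_{\dot B^s_{p,1}}$, with no low/high frequency splitting. So drop the $p>1$ attempt at the $\H^{n-s}$ bound and state (b) at the critical exponent.
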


The theorem has the following simple corollary.

\begin{corollary}\label{c:capacity-estimate}
If $f\in W^{s,p}(\R^n)$, $s\in(0,1)$, $1\le p<\infty$, $\liminf_{r\downarrow0}f_{x,r}\ge 1$ on $E$, then
\begin{equation}\label{e:capacity-estimate-3}
\H^{n-s}_\infty(E)\le C\|f\|_{\dot{W}^{s,p}(\R^n)}\cdot\|f\|_{L^p(\R^n)}^{p-1}\, .
\end{equation}
Moreover, if in addition $r\leq 1$, $E\subset B_r$ and $\spt f\subset B_{2r}$, then
\begin{equation}\label{e:capacity-estimate-4}
\H^{n-s}_\infty(E)\le Cr^{sp-s}\|f\|_{\dot{W}^{s,p}(B_{3r})}^p\, .
\end{equation}
\end{corollary}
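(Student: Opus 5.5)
The plan is to derive both inequalities from part (b) of Theorem \ref{t:capacity-estimate} with $p=q=1$, applied to $g:=|f|^p$ instead of $f$ itself. First I check that the hypotheses of the theorem hold for $g$. Clearly $g\geq 0$. By Jensen's inequality, $g_{x,r}=\frac{1}{|B_r(x)|}\int_{B_r(x)}|f|^p\geq (f_{x,r})^p$ whenever $f_{x,r}\geq 0$, so $\liminf_{r\downarrow 0} g_{x,r}\geq 1$ on $E$. Since $\dot{B}^s_{1,1}=\dot W^{s,1}$ for $0<s<1$ (both are given by the difference norm with $p=q=1$), Theorem \ref{t:capacity-estimate}(b) gives
\[
\H^{n-s}_\infty(E)\le C\,\|g\|_{\dot W^{s,1}(\R^n)}=C\int\!\!\int\frac{\bigl||f(x+h)|^p-|f(x)|^p\bigr|}{|h|^{n+s}}\,dx\,dh .
\]
Hence \eqref{e:capacity-estimate-3} reduces to the nonlinear estimate $\| |f|^p\|_{\dot W^{s,1}}\le C\|f\|_{\dot W^{s,p}}\|f\|^{p-1}_{L^p}$. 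This estimate has the correct scaling under $f\mapsto f(\cdot/\lambda)$: both sides scale like $\lambda^{n-s}$.

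To prove it, I would start from the elementary bound $\bigl||a|^p-|b|^p\bigr|\le p|a-b|\,(|a|^{p-1}+|b|^{p-1})$. Using the symmetry $x\leftrightarrow x+h$, this reduces matters to bounding $\int |f(x)|^{p-1}\,\Phi(x)\,dx$, where $\Phi(x):=\int \frac{|f(x+h)-f(x)|}{|h|^{n+s}}\,dh$. I would split $\Phi$ at a radius $\rho$, possibly depending on $x$. For $|h|>\rho$, the crude bound $|f(x+h)-f(x)|\le |f(x+h)|+|f(x)|$ together with integrability of $|h|^{-n-s}$ at infinity gives a contribution of order $\rho^{-s}\bigl(|f(x)|+Mf(x)\bigr)$, where $M$ is the maximal function. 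For $|h|<\rho$, I would apply Hölder in $h$ to compare with $D^{s,p}_\rho f(x)$ from \eqref{eq:def of local fractional deriavitve}.

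This small-$h$ step is the main obstacle. The naive Hölder split $|h|^{-n-s}=|h|^{-(n+sp)/p}\,|h|^{-n/p'}$ produces $\int_{|h|<\rho}|h|^{-n}\,dh$, which diverges only logarithmically. What I would try first is a dyadic decomposition in $|h|\sim 2^{-k}$, combined with a choice of $\rho(x)$ that balances $D^{s,p}f(x)$ against $Mf(x)$ pointwise (in the spirit of Hedberg's inequality). The aim is a bound like $\Phi(x)\lesssim (D^{s,p}f(x))^{a}(Mf(x))^{1-a}$ with exponents chosen so that, after multiplying by $|f|^{p-1}$ and applying Hölder in $x$ together with boundedness of $M$ on $L^p$ for $p>1$, one gets exactly $\|D^{s,p}f\|_{L^p}\,\|f\|_{L^p}^{p-1}$. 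If this pointwise interpolation loses the logarithm, the fallback is to apply Theorem \ref{t:capacity-estimate}(b) directly to a suitable function $f$ that is bounded in $\dot B^s_{p,1}$, and to run the same argument with $\|\Delta_h f\|_{L^p}$ in place of $\Phi$. There Hölder in $x$ alone gives $\|g\|_{\dot B^s_{1,1}}\le 2p\,\|f\|_{L^p}^{p-1}\int \|f(\cdot+h)-f\|_{L^p}\,|h|^{-n-s}\,dh$. For $p=1$ there is nothing to prove, since $g=|f|$ and $\||f|\|_{\dot W^{s,1}}\le\|f\|_{\dot W^{s,1}}$.

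For \eqref{e:capacity-estimate-4}, I would feed the support information into \eqref{e:capacity-estimate-3}. When $\spt f\subset B_{2r}$ with $r\le 1$, a fractional Poincaré inequality gives $\|f\|_{L^p}\le C r^{s}\|f\|_{\dot W^{s,p}(B_{3r})}$ (the region where $|x+h|$ or $|x|$ lies outside $B_{3r}$ is handled by the vanishing of $f$ outside $B_{2r}$). It also gives $\|f\|_{\dot W^{s,p}(\R^n)}\le C\|f\|_{\dot W^{s,p}(B_{3r})}$, where the terms with one point outside $B_{3r}$ are controlled by $\int_{B_{2r}}|f|^p\int_{|h|>r}|h|^{-n-sp}\lesssim r^{-sp}\|f\|^p_{L^p}$ and then by Poincaré again. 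Substituting, the factor $\|f\|_{L^p}^{p-1}$ contributes $r^{s(p-1)}=r^{sp-s}$, which yields \eqref{e:capacity-estimate-4}.
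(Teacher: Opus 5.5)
\textbf{There is a genuine gap.} The proposal fails at the reduction to $\| |f|^p\|_{\dot W^{s,1}}\le C\|f\|_{\dot W^{s,p}}\|f\|_{L^p}^{p-1}$. For $p>1$ this inequality is false, so no choice of $\rho(x)$, dyadic splitting, or Hedberg-type interpolation can prove it.

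The obstruction is the third Besov index. Where $|f|\approx 1$ we have $|f|^p\approx 1+p(f-1)$, so the left side controls the local $\dot B^s_{1,1}$ seminorm of $f$. The right side only sees $\dot B^s_{p,p}$. Concretely, take $n=1$ and $g(x)=\sum_{k\ge 1}k^{-1}2^{-sk}\cos(2^k x)$, which is bounded. Let $\chi\in C^\infty_c(\R)$ with $\chi=1$ on $[-1,1]$, and set $f=\chi(1+\varepsilon g)$ with $\varepsilon\|g\|_\infty\le\frac12$. Write $\Delta_h g=g(\cdot+h)-g$. By Zygmund's theorem on Hadamard-lacunary series, $\|\Delta_h g\|_{L^q(I)}\approx |h|^s/\log(1/|h|)$ for small $|h|$, for every $q\in[1,\infty)$ and every fixed interval $I$. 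Hence $\|f\|^p_{\dot W^{s,p}}\lesssim 1+\int_0^{1/2}\frac{dh}{h\log^p(1/h)}<\infty$. On the other hand, $\frac12\le f\le\frac32$ on $[-1,1]$, so the mean value theorem gives $\| |f|^p\|_{\dot W^{s,1}}\gtrsim\varepsilon\int_0^{h_0}\frac{dh}{h\log(1/h)}=\infty$.

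The same example defeats Theorem \ref{t:capacity-estimate}(b) applied to any $G(f)$ with $G$ bi-Lipschitz on $[\frac12,\frac32]$. Your fallback shows the loss explicitly. H\"older in $x$ produces $\|f\|_{\dot B^s_{p,1}}$, which for $p>1$ is strictly stronger than $\|f\|_{\dot W^{s,p}}=\|f\|_{\dot B^s_{p,p}}$ and is infinite for the $f$ above. So the fallback only proves a weaker statement.

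\textbf{The missing idea} is that for $p>1$ the endpoint estimate is not needed. Apply Theorem \ref{t:capacity-estimate}(a) with $q=p$ and $m=n-s$; this is admissible precisely because $n-s>n-sp$ when $p>1$. To get $f\ge 0$, replace $f$ by $f^+$, which does not increase the norms and keeps $\liminf_{r\downarrow 0} f_{x,r}\ge 1$. This gives the inhomogeneous bound $\H^{n-s}_\infty(E)\le C(\|f\|^p_{L^p}+\|f\|^p_{\dot W^{s,p}})$.

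Now homogenize. Apply the bound to $f(\cdot/\lambda)$ on $\lambda E$ and use $\H^{n-s}_\infty(\lambda E)=\lambda^{n-s}\H^{n-s}_\infty(E)$. This gives $\H^{n-s}_\infty(E)\le C(\lambda^s\|f\|^p_{L^p}+\lambda^{s-sp}\|f\|^p_{\dot W^{s,p}})$, and choosing $\lambda^s=\|f\|_{\dot W^{s,p}}/\|f\|_{L^p}$ yields \eqref{e:capacity-estimate-3}. The room $m-(n-sp)=s(p-1)>0$ is exactly what absorbs the logarithm you ran into.

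For $p=1$ you are right that (b) applies directly. Your derivation of \eqref{e:capacity-estimate-4} from \eqref{e:capacity-estimate-3}, via the fractional Poincar\'e inequality and the tail estimate, coincides with the paper's and is fine.
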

We will prove the Corollary below. For now, let us start with the proof of the Theorem.
\begin{proof}[Proof of Theorem \ref{t:capacity-estimate}]
We are going to use the famous Frostman's lemma, see \cite{Fro35} (or a standard textbook like \cite{Mat95}): there exists a Borel measure $\mu$ supported in $E$ (namely $\mu (E^c)=0$), which satisfies the bound $\mu(B_r (x))\le r^m$ for all balls and such that $\H^{m}_\infty(E)\le C\mu(E)$, where $C$ is a geometric constant. The problem reduces, therefore, to bounding the total mass of the Frostman measure $\mu$. Let us first assume that $f\in C^0\cap B^{s}_{p,q}$ and $\mu(E)<\infty$, and consider that
\[
\mu(E)\le\int f\,d\mu\le C\|f\|_{B^s_{p,q}}\|\mu\|_{B^{-s}_{p^\prime,q^\prime}}\, ,
\]
where $p^\prime$ and $q^\prime$ are the exponents dual to $p$ and $q$. 

Recalling now the Paley-Littlewood decomposition, let $\phi$ be the $C^\infty_c$ function supported in the annalus $\frac{1}{2} \leq |\xi|\leq 2$ with the property that 
\[
\widehat{P_k f} (\xi) = \hat{f} \phi (2^{-k} \xi)\, .
\]
Let $m_0$ be the inverse Fourier transform of $\phi$ and recall that 
\[
P_k \mu(x) = \int_{\mathbb{R}^n} 2^{nk} m_0(2^k(x-y)) \, d\mu(y)\, .
\]
Finally recall that, for every positive $a$ there is a constant $C$ such that $|m_0 (z)|\leq C (1+|z|)^{-a}$

Now estimate
\begin{align*}
|P_k\mu|(x) \le &\int_{|x-y|\le 2^{-k}}2^{nk}m_0(2^k(x-y))\,d\mu(y)+\sum_{\ell< k}\int_{2^{-(\ell+1)}<|x-y|\le 2^{-\ell}}2^{nk}m_0(2^k(x-y))\,d\mu(y)\\
\lesssim & 2^{nk}2^{-mk}+\sum_{\ell<k}2^{nk}2^{a(\ell-k)}2^{-m\ell}\, ,
\end{align*}
where from now on we use the symbol $\lesssim$ in case the left hand side can be bounded by the right hand side up to a geometric constant (depending only on $n,s,p,q$, and $m$.

Choose then an $a>m$ so that the latter series converges to obtain the bound
\[
\|P_k\mu\|_{L^\infty}\lesssim 2^{(n-m)k}\, .
\]
On the other hand,
\[\|P_k\mu\|_{L^1}\le \int\int 2^{nk}|m_0|(2^{k}(x-y))\,dx\,d\mu(y)=C\mu(E).\]
Notice $p^\prime>1$. We can thus bound
\[\|2^{-sk}P_k\mu\|_{p^\prime}\le 2^{-sk}\|P_k\mu\|_{\infty}^{1-1/p^\prime}\|P_k\mu\|_{1}^{1/p^\prime}\lesssim 2^{\frac{(n-sp-m)}{p}k}\mu(E)^{1/p^\prime}.\]
Let $c_k=2^{\frac{(n-sp-m)}{p}k}$. Then,
\[\|\mu\|_{B^{-s}_{p^\prime,q^\prime}}\lesssim \mu(E)^{1/p^\prime}\cdot \left\|(c_k)_{k\ge -1}\right\|_{\ell^{q^\prime}}\lesssim\mu(E)^{1/p^\prime},\]
provided $m>n-sp$, where the $\ell^{q^\prime}$ norm is taken over the sequence indexed by $k\ge -1$. Combining these we get $\H^m_\infty(E)\lesssim\mu(E)\lesssim \|f\|_{B^{s}_{p,q}}^p$.
As for the estimate in (b), notice that, if $q=1$, then $q^\prime=\infty$ and we therefore get
\[
\|\mu\|_{\dot{B}^{-s}_{p^\prime,\infty}}\lesssim \mu(E)^{1/p^\prime}\cdot\sup_{k\in\Z}\left(2^{\frac{(n-sp-m)}{p}k}\right)\, .
\]
Then the left hand side remains bounded even in the critical case $m=n-sp$.

In order to remove the assumptions that $\mu$ is bounded and $f$ continuous, consider first a standard nonnegative mollification kernel $\varphi$ and denote by $f_\ell$ the mollification of $f$ with $\varphi_\ell (x):= \ell^{-n} \varphi (\ell^{-1} x)$. Next let $B_R$ be the ball centered at $0$ with radius $\ell^{-1}$. Then we can apply the above argument to conclude
\[
\int_{B_R} f_{\ell}\,d\mu\le C\|f_{\ell}\|_{B^s_{p,q}}\mu(E\cap B_R)^{1/p^\prime}\, .
\]
We next let $\ell \downarrow 0$ and use Fatou's lemma and the inequality $\|f_{\ell}\|_{B^{s}_{p,q}}\le\|f\|_{B^{s}_{p,q}}$ to get
\[
\mu (E\cap B_R) \leq \int_{B_R} \liminf_{\ell \downarrow 0} f_{\ell}\, d\mu \leq C \|f\|_{B^s_{pq}}\mu (E\cap B_R)^{1/p^\prime}\, .
\]
In particular, since $\mu (E\cap B_R)$ is finite, we conclude
\[
\mu (E\cap B_R) \leq C \|f_\ell\|_{B^s_{pq}}^p\, .
\]
Hence letting $R\uparrow \infty$ we achieve both the estimates \eqref{e:capacity-estimate-1} and \eqref{e:capacity-estimate-2} in full generality.
\end{proof}

\begin{proof}[Proof of Corollary \ref{c:capacity-estimate}]
For $p=1$ the first inequality is the statement of Theorem \ref{t:capacity-estimate}(b). For $p>1$ we choose $m=n-s>n-sp$ and ``homogeneize'' the norms with the classical rescaling trick. More precisely, we let 
\[
\lambda E := \{\lambda x: x\in E\}
\]
and $f^\lambda (x) := f (\lambda^{-1} x)$. Clearly $\liminf_{r\downarrow 0} f^\lambda_{x,r} \geq 1$ for all $x\in \lambda E$. Thus we can estimate
\[
\mathcal{H}_\infty^{n-s} (\lambda E) \leq 
C (\|f^\lambda\|_{L^p}^p + \|f^\lambda\|_{\dot{W}^{s,p}}^p)\, 
\]
using Theorem \ref{t:capacity-estimate}(a).
Note the obvious scaling facts that 
\begin{align*}
\mathcal{H}_\infty^{n-s} (\lambda E) &= \lambda^{n-s}\mathcal{H}_\infty^{n-s} (E),\\
\|f^\lambda\|^p_{L^p} & = \lambda^n \|f\|^p_{L^p},\\
\|f^\lambda\|^p_{\dot{W}^{s,p}}&= \lambda^{n-ps}\|f^\lambda\|^p_{\dot{W}^{s,p}}\, .
\end{align*}
In particular we get 
\[
\mathcal{H}_\infty^{n-s} (\lambda E) \leq C \bigl(\lambda^s\|f\|^p_{L^p}
+ \lambda^{s-ps} \|f\|_{\dot{W}^{s,p}}^p)\, .
\]
Choosing 
\[
\lambda^s= \frac{\|f\|_{\dot{W}^{s,p}}}{\|f\|_{L^p}}\, ,
\]
we then get \eqref{e:capacity-estimate-3}. 

For the second part, first observe that \eqref{e:capacity-estimate-3} yields immediately
\begin{align*}
\mathcal{H}^\infty_{n-s} (E) \leq C \|f\|_{L^p (B_{2r})}^{p-1} \|f\|_{\dot{W}^{s,p} (\mathbb R^n)}\, .
\end{align*}
Moreover, since support of $f$ is in $B_{2r}$, the Poincar\'e inequality for fractional Sobolev spaces gives 
\[
\|f\|_{L^p (B_{2r})} \leq C r^s \|f\|_{\dot{W}^{s,p} (B_{3r})}\, .
\]
Next we estimate
\begin{align*}
\|f\|_{\dot{W}^{s,p} (\mathbb R^n)}^p
&= \int_{\mathbb R^n}\int_{\mathbb R^n} \frac{|f(y)-f(z)|^p}{|y-z|^{sp+n}}\,dy\,dz\\
&= \int_{B_{3r}} \int_{B_{3r}} \frac{|f(y)-f(z)|^p}{|y-z|^{sp+n}}\, dy\,dz + 2 \int_{\mathbb R^n\setminus B_{3r}} \int_{B_{2r}} \frac{|f(y)|^p}{|y-z|^{sp+n}}\, dy\,dz\\
& \leq \|f\|_{\dot{W}^{s,p} (B_{3r})} + 2 \int_{B_{2r}} |f(y)|^p dy \int_{\mathbb R^n \setminus B_r} \frac{dw}{|w|^{sp+n}}\\
& \leq \|f\|_{\dot{W}^{s,p} (B_{3r})} + \frac{C}{r^{sp}} \|f\|_{L^p (B_{2r})}^p 
\leq C \|f\|_{\dot{W}^{s,p} (B_{3r})}^p\, ,
\end{align*}
thus completing the proof.
\end{proof}

\section{Proof of Theorem \ref{t:coarea-fractional}}

The proof of the theorem follows closely the scheme of \cite{MSZ03}. First of all we will show the following lemma. We will use routinely the orthogonal projection of $\mathbb R^n \times \mathbb R$, for which we introduce the notation $\pi_o$. Recall the definitions \eqref{eq:def of fractional deriavitve} and \eqref{eq:def of local fractional deriavitve}.

\begin{lemma}\label{l:cylinder-estimate}
Let $0<s<1$, $1\leq p < \infty$ and $n\geq 1$. Given a point $z_0 = (x_0, y_0) \in \mathbb R^n\times \mathbb R$ and a radius $r>0$, let $Q(z_0,r)\subset \mathbb R^{n+1}$ be the cylinder $B(x_0,r)\times B(y_0,r^s)$. 
Then, for every $f\in \dot{W}^{s,p} (B_{5r} (x_0))$ we have the estimate
\[
\H^{n-s}_{\infty}(\pi_o (\Gamma (f)\cap Q(z_0,r)))\le Cr^{-s}\int_{\pi_o (\Gamma (f) \cap Q(z_0,2r))} (|D^{s,p}_{6r} f|^p+1)\, ,
\]
where the constant $C$ depends on $n$, $s$, and $p$, but not on $f$ nor on $r$.
\end{lemma}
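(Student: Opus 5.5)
The plan is to build a truncated test function $g$ that is $\geq 1$ on $E:=\pi_o(\Gamma(f)\cap Q(z_0,r))$, is supported in $B_{2r}(x_0)$, and vanishes (a.e.) off $A:=\pi_o(\Gamma(f)\cap Q(z_0,2r))$. We then feed $g$ into the localized capacity estimate \eqref{e:capacity-estimate-4} of Corollary \ref{c:capacity-estimate}. This mirrors the scheme of \cite{MSZ03}: the vertical size $r^s$ of the cylinder is exactly what converts the $r^{sp-s}$ factor in \eqref{e:capacity-estimate-4} into the $r^{-s}$ of the claim.

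\emph{Construction.} Translate so that $x_0=0$. Fix a cutoff $\varphi\in C^\infty_c(B_{2r})$ with $0\le\varphi\le1$, $\varphi\equiv1$ on $B_r$ and $|\nabla\varphi|\lesssim r^{-1}$. Fix a piecewise linear $\psi:\mathbb R\to[0,1]$ with $\psi\equiv1$ on $[-1,1]$, $\psi\equiv0$ outside $(-2^s,2^s)$, and ${\rm Lip}(\psi)\le C(s)$. Set
\[
g(x):=\varphi(x)\,\psi\bigl(r^{-s}(f(x)-y_0)\bigr).
\]
Then $0\le g\le1$ and $\spt g\subset B_{2r}$. Take $x\in E$. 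Then $x$ is a Lebesgue point of $f$ with $|\bar f(x)-y_0|<r^s$. Since $\psi$ is Lipschitz, $x$ is also a Lebesgue point of $g$ with value $\varphi(x)\psi(r^{-s}(\bar f(x)-y_0))=1$, so $\liminf_\rho g_{x,\rho}\ge1$ on $E$. Because $E\subset B_r$ and we may assume $r\le1$ (or simply rescale), \eqref{e:capacity-estimate-4} gives
\[
\mathcal H^{n-s}_\infty(E)\le C r^{sp-s}\|g\|^p_{\dot W^{s,p}(B_{3r})}.
\]

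\emph{Estimating the seminorm.} Let $A'=\{x\in B_{2r}: |f(x)-y_0|<(2r)^s\}$. By Lebesgue's differentiation theorem $A'$ and $A$ coincide up to a null set, and $g=0$ a.e. outside $A'$. So in the double integral over $B_{3r}\times B_{3r}$ at least one of the two variables lies in $A$, and by symmetry the integral is at most $2\int_A\int_{B_{3r}}\ldots$. For $x\in A$ and $z=x+h\in B_{3r}$ we have $|h|\le5r\le6r$, and
\[
|g(x)-g(z)|\le|\varphi(x)-\varphi(z)|+C r^{-s}|f(x)-f(z)|\lesssim \min\{1,|h|/r\}+r^{-s}|f(x+h)-f(x)|.
\]
Integrating in $h$ against $|h|^{-sp-n}$ over $|h|\le6r$ gives two contributions. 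The first is $\lesssim r^{-p}\int_{|h|\le 6r}|h|^{p-sp-n}\,dh\lesssim r^{-sp}$, which is finite because $s<1$. The second is $r^{-sp}|D^{s,p}_{6r}f(x)|^p$. Hence
\[
\|g\|^p_{\dot W^{s,p}(B_{3r})}\lesssim r^{-sp}\int_A\bigl(|D^{s,p}_{6r}f|^p+1\bigr),
\]
and multiplying by $r^{sp-s}$ yields the claim.

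\emph{Main obstacle.} I expect the delicate points to be measure-theoretic rather than computational:
\begin{itemize}
\item identifying $\{g\neq0\}$ with $A$ up to a null set, which is needed to replace the integral over $A'$ by the integral over $A=\pi_o(\Gamma(f)\cap Q(z_0,2r))$;
\item checking that the pointwise condition in Corollary \ref{c:capacity-estimate} holds at every point of $E$, not just almost everywhere;
\item justifying the application of \eqref{e:capacity-estimate-4} when $f$ is only in $\dot W^{s,p}(B_{5r})$.
\end{itemize}
For the last point, note that $g$ is defined only through values of $f$ in $B_{2r}$ and is extended by zero. Its global $\dot W^{s,p}$ seminorm is then controlled exactly as in the proof of the corollary. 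The $L^p$ part is harmless since $|g|\le1$ and $\spt g\subset B_{2r}$.
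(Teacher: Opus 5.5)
Your proposal is correct and is essentially the paper's proof. It uses the same test function $\varphi\,\psi\bigl(r^{-s}(f-y_0)\bigr)$ fed into \eqref{e:capacity-estimate-4}, restricts the double integral to $\{g\neq 0\}\subset \pi_o(\Gamma(f)\cap Q(z_0,2r))$ up to a null set, and applies the same product/Lipschitz splitting to get $|D^{s,p}_{6r}g|^p\lesssim r^{-sp}\bigl(|D^{s,p}_{6r}f|^p+1\bigr)$. Along the way you make explicit the Lebesgue-point check on $E$ (correctly using $|\bar f(x)-y_0|<r^s$) and the a.e.\ identification of $\{g\neq 0\}$ with a subset of $A$, which the paper only asserts.
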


\begin{proof}
Set $E:=\pi_o (\Gamma (f)\cap Q(z_0,r))$, and notice that $x\in E$ if and only if $|x-x_0|<r$, $x\in Z(f)$, and $|\bar{f}(x)-y_0|<r$. Now fix two smooth cut-off functions $\varphi, \psi$ on $\mathbb R^n$ and $\mathbb R$ respectively, with $\mathbf{1}_{B_1}\le\varphi\le \mathbf{1}_{B_2}$ and $\mathbf{1}_{[-1,1]} \le \psi \le\mathbf{1}_{[-2^s, 2^s]}$. We define 
\begin{align*}
\varphi_{x_0 r}(x) &:= \varphi \left(\frac{x-x_0}{r}\right),\\
\psi_{y_0,r} (y) &:= \psi \left(\frac{\bar{f}(x)-y_0}{r^s}\right),
\end{align*}
and 
\[
u(x) =\varphi_{x_0, r} (x)\, \psi_{y_0,r} (\bar{f} (x))\, .
\]
Observe now that:
\begin{itemize}
\item $E\subset B_r (x_0)$;
\item ${\rm spt}\, (u) \subset B_{2r} (x_0)$;
\item $\liminf_{r\downarrow 0} u_{x,r}\geq 1$ on every $x\in E$.
\end{itemize}
In particular we can apply Corollary \ref{c:capacity-estimate} and use \eqref{e:capacity-estimate-4} to conclude
\[
\H^{n-s}_\infty(E)\le Cr^{sp-s}\|u\|^p_{\dot{W}^{s,p}(B_{3r} (x_0))}\, .
\]
Next, observe that 
\begin{align*}
\|u\|^p_{\dot{W}^{s,p}(B_{3r}(x_0))} 
&= 2 \int_{B_{3r} (x_0) \cap \{u\neq 0\}} \int_{B_{3r} (x_0)} \frac{|u(x)-u(y)|^p}{|x-y|^{n+sp}}\, dx\, dy \\
&\leq 2 \int_{B_{3r} (x_0)\cap \{u\neq 0\}} |D^{s,p}_{6r} u (y)|^p\, dy 
\leq 2 \int_{\pi_o (\Gamma (f)\cap Q_{2r} (z_0))} |D^{s,p}_{6r} u|^p\, . 
\end{align*}
We next estimate
\begin{align*}
D^{s,p}_{6r}u(x) & \le \|\varphi_{x_0,r}\|_{L^\infty}D ^{s,p}_{6r} (\psi_{y_0,r}(f (x)))+\|\psi_{y_0,r}\|_{L^\infty} D^{s,p}_{6r}(\varphi_{x_0,r}(x))\\
& \le D^{s,p}_{6r}(\psi_{y_0,r}(f(x)))+D^{s,p}_{6r}(\varphi_{x_0,r}(x))\, .
\end{align*}
Observe that the Lipschitz constant of $\psi_{y_0,r}$ is bounded by $C r^{-s}$, while 
\[
D^{s,p}_{6r} \varphi_{x_0,r} \leq D^{s,p} \varphi_{x_0,r} \leq C r^{-s}\, .
\] 
Therefore the conclusion of the lemma readily follows.
\end{proof}

\begin{proof}[Proof of Theorem \ref{t:coarea-fractional}] Consider $f$ as in the statement of the theorem, and the corresponding exponents $0<s<1$ and $1\leq p < \infty$.
We fix $r_0>0$ and smaller than $1$ and introduce the measure 
\begin{equation}\label{e:sigma-measure}
\sigma(E) : =\int_{\pi_o (\Gamma (f) \cap E)}\left(|D^{s,p}_{6r_0} f|^p+1\right)
\end{equation}
which we define for every Borel set $E$ which is contained in $\Omega'\times \mathbb R$, where $\Omega'$ is the subset of $\Omega$ consisting of those points $x$ of $\Omega$ at distance at least $6r_0$ from the boundary $\partial \Omega$. Provided we show that $\pi_o (\Gamma (f) \cap E)$ is Lebesgue measurable (a routine check, the reader can argue as in \cite{MSZ03}), $\sigma$ defines a Borel measure. 

Consider now an arbitrary point $z= (x,y) \in \Omega'\times \mathbb R$ and  a cylinder $Q_r (z) = B_r (x) \times (y_0-r^s, y_0+r^s)$ for some arbitrary positive radius $r< r_0$.  Observe that 
\[
\Gamma (f) \cap Q_r (z) \subset \bigl(\pi_o (\Gamma (f)\cap Q_r (x))\bigr) \times (y_0-r^s, y_0+r^s)\, .
\]
We recall the elementary inequality 
\begin{equation}\label{e:Hausdorff-product}
\mathcal{H}^{c+1}_\infty (A \times (a,b))\leq C (b-a) \mathcal{H}^c_\infty (A)\, , 
\end{equation}
which is valid for an arbitrary interval $(a,b)\subset \mathbb R$, an arbitrary exponent $c$, and a set $A\subset \mathbb R^n$ with $\operatorname{diam} A\le b-a$ (with a constant $C$ depending only upon $n$ and $c$). To prove it, consider a cover for $A$ with sets $E_i$, so that 
\[
\omega_c \sum_{i=0}^\infty ({\rm diam}\, (E_i))^c \leq 2 \mathcal{H}^c_\infty (A) \, .
\]
We may assume $\operatorname{diam}E_i\le b-a$ after intersecting the set with $A$. Now for
each $i$, split the interval $(a,b)$ in $\lceil(b-a)/(\operatorname{diam} (E_i))\rceil$
intervals of length $\le\operatorname{diam} (E_i)$ and take all their products with
$E_i$: this makes a family $\mathcal{F}_i = \{F_{i,j}\}_j$ of sets, and their union $\mathcal{F}= \bigcup_i \mathcal{GF}_i$ is a cover of $A\times (a,b)$. Moreover, clearly 
\[
{\rm diam}\, (F_{i,j}) \leq \sqrt{2} {\rm diam}\, (E_i)\, .
\]
In particular
\begin{align*}
\mathcal{H}^{c+1}_\infty (A\times (a,b)) &\leq \omega_{c+1} \sum_{i=0}^\infty \sum_{j=0}^\infty \bigl({\rm diam}\, (F_{i,j})\bigr)^{c+1}\\
& \leq \omega_{c+1} \sum_{i=0}^\infty 2^{(c+1)/2} \frac{2 (b-a)}{{\rm diam}\, (E_i)}\bigl( {\rm diam}\, (E_i)\bigr)^{c+1} \leq \frac{2^{(c+1)/3} \omega_{c+1}}{\omega_c} \mathcal{H}^c_\infty (A)\, .
\end{align*}
Having shown \eqref{e:Hausdorff-product} we can estimate 
\begin{equation}\label{e:local}
\H^{n-s+1}_\infty(\Gamma (f) \cap Q_r (z))\le Cr^s\H^{n-s}_\infty(\pi_o(\Gamma (f) \cap Q_r (z)))\le C\sigma(Q_{2r} (z))\, ,
\end{equation}
where in the last inequality we have used Lemma \ref{l:cylinder-estimate}. 

We next introduce the infinite cylinders $C_r (x) := B_r (x) \times \mathbb R \subset \mathbb R^{n+1}$ and use \eqref{e:local} to prove that 
\begin{equation}\label{e:global}
\H^{n-s+1}_\infty (\Gamma (f) \cap C_\rho (x)) \leq C \sigma (C_{4\rho} (x))
\qquad \forall x\in \Omega', \forall \rho < \tfrac{1}{6} {\rm dist}\, (x, \partial \Omega)\, .
\end{equation}
To see this, fix a very large integer $k\in \mathbb N$ and cover $B_\rho (x)$ with balls $B_{r}^i$ of radius $r = 2^{-k} \rho$, centered at points of $B_\rho (x)$ and with the finite overlap property: each enlarged ball $B_{2\rho}(x)$ in the collection intersects at most $N = N (n)$ other enlarged balls of the collection. Consider then any cylinder $Q_{r}^{ij} = B_{r}^i \times ((j-1) r^{s}, (j+1) r^{s})$ Clearly this collection of cylinders cover $C_\rho (x)$ and each enlarged cylinder $Q_{2r}^{ij}$ in the collection intersects at most $5N+4$ other cylinders in the collection. Moreover, if $r$ is picked small, the cylinders $Q_{2r}^{ij}$ are contained in $C_{4\rho} (x)$. Using the subaddivity of the Hausdorff premeasure $\mathcal{H}^{n-s+1}_\infty$, \eqref{e:local}, the finite overlap property, and the additivity of the measure $\sigma$ we immediately get
\[
\H^{n-s+1}_\infty (\Gamma (f) \cap C_\rho (x)) 
\leq \sum_{i,j} \H^{n-s+1}_\infty (\Gamma (f) \cap Q_r^{ij})
\leq C \sum_{i,j} \sigma (Q_{2r}^{ij}) \leq C \sigma (C_{4\rho} (x))\, .
\]
Observe next that the measure $\sigma$ depends in fact upon $r_0>0$, cf. \eqref{e:sigma-measure}.  On the other hand \eqref{e:global} has been proved for any $\rho$, independently of $r_0$. We can thus fix $x$ and $\rho$ as in \eqref{e:global} and let $r_0$ go to $0$. Recalling however the definition of $D^{s,p}_{r_0} f$, we immediately see that 
\[
\lim_{r_0\downarrow 0} D^{s,p}_{r_0} f (x) = 0 \qquad \mbox{for a.e. $x$,}
\]
from the finiteness of the Sobolev-Slobodeckij norm. 
We can pick an extension $g\in \dot{W}^{s,p} (\mathbb R^n)$ of $f$ and get that $D^{s,p}_{r_0} f (x) \leq D^{s,p} g (x)$ and thus use the dominated convergence theorem to see that 
\[
\lim_{r_0\downarrow 0} \sigma (E) = |\pi_o (E\cap \Gamma (f))|
\leq |\pi_o (E)|\, 
\]
(where $|A|$ denotes the Lebesgue measure of $A$ in $\mathbb R^n$). We thus get the estimate
\begin{equation}\label{e:global-2}
\H^{n-s+1}_\infty (\Gamma (f) \cap C_\rho (x)) \leq C \rho^n\, ,
\end{equation}
where $\rho$ is a geometric constant depending only upon $s,p$, and $n$. This independence of the constant upon $f$ lets us conclude. In fact, if we fix any $\ell>0$ and consider the function $f_\ell (x) = \ell f (\ell^{-1} x)$, then clearly $f_\ell \in W^{s,p} (\ell \Omega)$ and we can apply \eqref{e:global-2} to $f_\ell$. On the other hand $\Gamma (f_\ell) = \ell \Gamma (f)$ and obvious scaling arguments yield
\begin{align}
\ell^{n-s+1} \H^{n-s+1} (\Gamma (f) \cap C_\rho (x)) 
& = \H^{n-s+1} (\ell (\Gamma (f) \cap C_\rho (x)))\nonumber\\
&= \H^{n-s+1} (\Gamma (f_\ell) \cap C_{\ell \rho} (\ell x) 
\leq C \ell^n \rho^n\, .\label{e:global-3}
\end{align} 
In particular 
\[
\H^{n-s+1} (\Gamma (f) \cap C_\rho (x)) \leq C \ell^{s-1} \rho^n\, .
\]
Letting $\ell\uparrow \infty$ we achieve 
\[
\H^{n-s+1} (\Gamma (f)\cap C_\rho (x)) = 0\, .
\]
Covering $\Gamma (f)$ with countably many such cylinders concludes the proof.
\end{proof}

\section{Intermittent random series of wavelets}\label{s:random}

In this section we consider some possible constructions of random functions in the Sobolev-Slobodeckij hierarchy. For simplicity we fix our domain to be the periodic torus $\mathbb T^n$. We then fix an orthogonal basis of wavelets $\{\psi_{k, \lambda}\}$ on the space of mean-zero elements of $L^2 (\mathbb T^n)$, where $k\in \mathbb N$ denotes the scale of the wavelet, $\Lambda_k$ denotes the subset of wavelets (in the basis) of scale $k$, and the index $\lambda$ runs in $\Lambda_k$. We will assume that this basis is $L^\infty$ normalized, that it is constructed as the standard periodization of a basis of compactly-supported wavelets in $\mathbb R^n$  (in Appendix \ref{a:multi-resolution} we give more details about this to the interested reader) and that the mother wavelets of the basis are $C^1$. This allows us to invoke the following (cf. \cite[Chapter 9]{Dau92}).

\begin{theorem}\label{t:PL-wavelets}
Let $0<\alpha <1$ and $1 < p < \infty$. A function $f\in L^2 (\mathbb T^n)$ with expansion
\[
f = \sum_k \sum_{\lambda \in \Lambda_k} c_{k,\lambda} \psi_{k,\lambda}
\]
belongs to $W^{\alpha,p}$ if and only if 
\[
\|f\|^p_{\alpha,p} := \sum_k 2^{k (p \alpha -n)} \sum_{\lambda \in \Lambda_k} |c_{k,\lambda}|^p < \infty\, .
\]
Moreover there is $C= C(n,\alpha, p)>0$ such that $C^{-1} \|f\|_{\alpha, p} \leq \|f\|_{W^{\alpha,p}} \leq C \|f\|_{\alpha, p}$ for all $f$.
\end{theorem}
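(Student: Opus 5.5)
The plan is to prove the two inequalities $\|f\|_{\alpha,p}\lesssim\|f\|_{W^{\alpha,p}}$ and $\|f\|_{W^{\alpha,p}}\lesssim\|f\|_{\alpha,p}$ separately. Both use only three properties of the basis. First, each $\psi_{k,\lambda}$ is (the periodization of) $\psi(2^k x-\lambda)$ for one of finitely many compactly supported $C^1$ mother wavelets $\psi$ with $\int\psi=0$. Second, the supports $Q_{k,\lambda}$ at scale $k$ have diameter $\lesssim 2^{-k}$ and bounded overlap, with $\#\Lambda_k\approx 2^{kn}$. Third, the $L^\infty$ normalization gives $\|\psi_{k,\lambda}\|_{L^2}^2\approx 2^{-kn}$, so that $c_{k,\lambda}=\|\psi_{k,\lambda}\|_{L^2}^{-2}\langle f,\psi_{k,\lambda}\rangle\approx 2^{kn}\langle f,\psi_{k,\lambda}\rangle$. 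Throughout I use the difference form of the seminorm from Definition \ref{d:Sob-Slob}, i.e. $\|f\|_{\dot W^{\alpha,p}}^p\approx\int\|f(\cdot+h)-f\|_p^p\,|h|^{-\alpha p-n}\,dh$ with $|h|\lesssim 1$ on the torus (large $h$ contributes $\lesssim\|f\|_p^p$).

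\emph{Lower bound on $W^{\alpha,p}$.} Since $\int\psi_{k,\lambda}=0$, I write
\[
c_{k,\lambda}\approx 2^{kn}\int_{Q_{k,\lambda}}(f-f_{Q_{k,\lambda}})\,\psi_{k,\lambda},
\]
where $f_Q$ is the average of $f$ on $Q$. By Hölder and Jensen,
\[
|c_{k,\lambda}|^p\lesssim 2^{kn}\int_{Q_{k,\lambda}}|f-f_{Q_{k,\lambda}}|^p\lesssim 2^{2kn}\int_{Q_{k,\lambda}}\int_{Q_{k,\lambda}}|f(x)-f(y)|^p\,dx\,dy .
\]
Multiplying by $2^{k(p\alpha-n)}$ and summing over $\lambda$, bounded overlap gives
\[
\sum_{\lambda\in\Lambda_k}2^{k(p\alpha-n)}|c_{k,\lambda}|^p\lesssim 2^{k(p\alpha+n)}\iint_{|x-y|\le C2^{-k}}|f(x)-f(y)|^p\,dx\,dy .
\]
Summing over $k\ge 0$ and using $\sum_{k:\,2^{-k}\ge|x-y|/C}2^{k(p\alpha+n)}\lesssim|x-y|^{-p\alpha-n}$ yields $\|f\|_{\alpha,p}^p\lesssim\|f\|^p_{\dot W^{\alpha,p}}$. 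This direction is routine.

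\emph{Upper bound on $W^{\alpha,p}$.} Set $f_k=\sum_{\lambda\in\Lambda_k}c_{k,\lambda}\psi_{k,\lambda}$.
\begin{enumerate}
\item Bounded overlap and the $L^\infty$ normalization give $a_k:=\|f_k\|_p\approx\bigl(2^{-kn}\sum_\lambda|c_{k,\lambda}|^p\bigr)^{1/p}$, so $\|f\|_{\alpha,p}^p\approx\sum_k 2^{k\alpha p}a_k^p$.
\item The $C^1$ regularity of $\psi$ gives $|\psi_{k,\lambda}(x+h)-\psi_{k,\lambda}(x)|\lesssim 2^k|h|$ on a slightly enlarged support. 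Hence $\|f_k(\cdot+h)-f_k\|_p\lesssim\min(1,2^k|h|)\,a_k$.
\item The $L^p$ part follows from $\|f\|_p\le\sum_k a_k\le\bigl(\sum 2^{k\alpha p}a_k^p\bigr)^{1/p}\bigl(\sum 2^{-k\alpha p'}\bigr)^{1/p'}$.
\item For the seminorm, split the dyadic shell $2^{-j-1}<|h|\le 2^{-j}$ and write
\[
\|\Delta_h f\|_p\lesssim\sum_{k\le j}2^{k-j}a_k+\sum_{k>j}a_k .
\]
Integrating against $|h|^{-\alpha p-n}$ over the shell gives the weight $2^{j\alpha p}$, so the seminorm is bounded by $\sum_j 2^{j\alpha p}\bigl(\sum_{k\le j}2^{k-j}a_k+\sum_{k>j}a_k\bigr)^p$.
\end{enumerate}
Step 4 reduces to two discrete Hardy inequalities with geometric kernels $2^{(k-j)(1-\alpha)}$ for $k\le j$ and $2^{(j-k)\alpha}$ for $k>j$. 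Both are summable exactly because $0<\alpha<1$, and Schur's test (or Young's inequality on $\mathbb Z$) bounds the sum by $\sum_k 2^{k\alpha p}a_k^p$.

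The main obstacle I expect is this last step: getting the Hardy-type convolution estimate cleanly for all $1<p<\infty$, together with the bookkeeping for periodization, where the supports of periodized wavelets at coarse scales $k\lesssim 1$ wrap around. The coarse scales I would handle by noting there are only finitely many of them, each with a finite-dimensional space of coefficients, so their contribution is trivially bounded. Convergence of the series in $L^p$ follows from item 3, which justifies the term-by-term manipulations.
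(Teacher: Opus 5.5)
Your proof is correct, and it takes a genuinely different route from the paper. The paper gives no argument for this statement: it imports it from \cite[Chapter 9]{Dau92}, i.e.\ from the general wavelet characterization of the Besov spaces $B^\alpha_{pp}=W^{\alpha,p}$. You instead prove the two inequalities by hand, as a direct/inverse (Jackson--Bernstein type) estimate. The bound $\|f\|_{\alpha,p}\lesssim\|f\|_{W^{\alpha,p}}$ uses only the vanishing mean of $\psi_{k,\lambda}$, H\"older and Jensen. The reverse bound uses only $\|\Delta_h f_k\|_p\lesssim\min(1,2^k|h|)\,a_k$, which comes from the $C^1$ regularity, together with two discrete Young/Hardy inequalities whose kernels $2^{-m(1-\alpha)}$ and $2^{-m\alpha}$ are summable precisely because $0<\alpha<1$.

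What your route buys: it is self-contained and works directly in the periodic setting (torus distance) in which the statement is formulated and used in Section~\ref{s:random}. It also makes transparent why one vanishing moment and $C^1$ wavelets suffice in this range of $\alpha$, and the same argument goes through for $p=1$ as well. What the citation buys is generality (higher smoothness with more regular wavelets, other Besov indices $q$), none of which is needed here.

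Two bookkeeping points only. First, in the Jensen step take $Q_{k,\lambda}$ to be a cube of side comparable to $2^{-k}$ containing the support (or all of $\mathbb T^n$ at the finitely many coarse scales), so that $|Q_{k,\lambda}|\gtrsim 2^{-kn}$ is available. Second, it is cleaner to define $a_k:=\bigl(2^{-kn}\sum_\lambda|c_{k,\lambda}|^p\bigr)^{1/p}$ directly, since only $\|f_k\|_p\lesssim a_k$ and the difference bound are used. The reverse inequality does hold, via $c_{k,\lambda}=2^{kn}\langle f_k,\psi_{k,\lambda}\rangle$ and H\"older, but it is not needed.
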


We now fix $0<s <1$ and $1<p<\infty$ and choose a number $\beta$ satisfying
\begin{align}
\beta & > - s\, ,  \label{e:constraint-1}\\
n & > p \left(s+\beta\right) =: \delta\, \label{e:constraint-2}
\end{align}
(the existence of $\beta$ is obvious).
We then build a series of wavelets
\begin{equation}\label{e:random-wavelets}
f = \sum_{k\in \mathbb N} \sum_{\lambda \in \Lambda_k} c_{k, \lambda} \psi_{k, \lambda} \, ,
\end{equation}
with random coefficient $c_{k,\lambda}$ given by
\begin{equation}\label{e:random-coefficient}
c_{k, \lambda} = W_{k, \lambda} \left( 2^{-ks} (1- B_{k, \lambda}) + 2^{k\beta} B_{k, \lambda}\right)\, ,
\end{equation}
where 
\begin{itemize}
\item[(a)] the $W_{k, \lambda}$ are independent identically distributed real random variables with mean zero, variance $1$, and finite moments;
\item[(b)] the $B_{k, \lambda}$ are independent Bernoulli random variables taking the value $1$ with probability $2^{-\delta k}$ and the value $0$ with probability $1-2^{-\delta k}$.
\end{itemize}
The probability measure constructed above (for a fixed choice of $s, p$, and $\beta$ satisfying 
the constraints) will be denoted by $\mathbb P$ and, since for every choice of the parameters we have $f\in L^1 (\mathbb T^n)$ almost surely, we will consider $\mathbb P$ as a probability on $L^1 (\mathbb T^n)$.

The following  theorem justifies the term ``intermettent random series of wavelets''. 

\begin{theorem}\label{t:random-Sobolev}
Let $0< s < 1$, $1<p<\infty$ and $\beta$ satisfying \eqref{e:constraint-1} and \eqref{e:constraint-2}. Consider random $f$ as in \eqref{e:random-wavelets}, satisfying \eqref{e:random-coefficient}, (a), and (b) above. 
Then the following holds for $\mathbb P$-a.e. $f$:
\begin{itemize}
\item[(i)] $f\in W^{\alpha,p}$ for every $\alpha< s$;
\item[(ii)] $f\not\in W^{\alpha,p}$ for any $\alpha > s$;
\item[(iii)] For every $q>p$ there is $\alpha < s$ such that $f\not\in W^{\sigma, q}$ for all $\sigma \in (\alpha, s)$.
\end{itemize}
\end{theorem}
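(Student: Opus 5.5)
The plan is to reduce everything to the wavelet characterization of Theorem \ref{t:PL-wavelets}. For an exponent $q\in(1,\infty)$ and a smoothness $\alpha\in(0,1)$ I will study the blocks
\[
S_k(\alpha,q):=2^{k(q\alpha-n)}\sum_{\lambda\in\Lambda_k}|c_{k,\lambda}|^q ,
\]
using that $\#\Lambda_k\simeq 2^{kn}$. Since $(1-B_{k,\lambda})B_{k,\lambda}=0$, we have exactly
\[
|c_{k,\lambda}|^q=|W_{k,\lambda}|^q\bigl(2^{-ksq}(1-B_{k,\lambda})+2^{k\beta q}B_{k,\lambda}\bigr).
\]
Hence
\[
\mathbb E\, S_k(\alpha,q)\simeq 2^{kq\alpha}\bigl(2^{-ksq}+2^{k(\beta q-\delta)}\bigr),
\]
with constants depending on the finite moments of $W$. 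A preliminary remark: $f\in L^2$ almost surely, because $\mathbb E\|f\|_2^2\simeq\sum_k 2^{-kn}\sum_\lambda \mathbb E|c_{k,\lambda}|^2$ and the right-hand side is finite, possibly after using (i) with $p=2$-type bounds. In any case, divergence of the Besov-type sum shows non-membership, so the characterization can be applied in both directions.

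\textbf{Step (i).} Take $q=p$. Since $\delta=p(s+\beta)$ we get $\beta p-\delta=-sp$, so $\mathbb E\, S_k(\alpha,p)\simeq 2^{kp(\alpha-s)}$. This is summable for every $\alpha<s$. Thus $\mathbb E\sum_k S_k(\alpha,p)<\infty$, and the sum is almost surely finite. I will take a sequence $\alpha_j\uparrow s$ and intersect the countably many full-measure events. Monotonicity of the $W^{\alpha,p}$ spaces in $\alpha$ (which is visible from the coefficient norm) then gives (i) for all $\alpha<s$ simultaneously.

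\textbf{Step (ii).} I only use the non-intermittent part. Set $X_k:=\sum_\lambda |W_{k,\lambda}|^p(1-B_{k,\lambda})$. This is a sum of $\simeq 2^{kn}$ independent variables whose means are bounded below by $c\,\mathbb E|W|^p>0$ for large $k$ and whose variances are bounded. By Chebyshev,
\[
\mathbb P\bigl(X_k<\tfrac c2 2^{kn}\bigr)\lesssim 2^{-kn},
\]
which is summable, so by Borel--Cantelli we have $X_k\ge c'2^{kn}$ for all large $k$ almost surely. Then $S_k(\alpha,p)\ge c'2^{kp(\alpha-s)}\to\infty$ for $\alpha>s$, and the series diverges. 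Again a countable sequence $\alpha_j\downarrow s$ and monotonicity give the statement for all $\alpha>s$.

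\textbf{Step (iii).} This step is the heart of the matter and uses the intermittent part. Since $W$ has variance $1$, there is $a>0$ with $\theta:=\mathbb P(|W|\ge a)>0$. Let $N_k$ be the number of $\lambda\in\Lambda_k$ with $B_{k,\lambda}=1$ and $|W_{k,\lambda}|\ge a$. Then $N_k$ is a binomial variable with mean $\simeq\theta\,2^{k(n-\delta)}$, which tends to $\infty$ because $n>\delta$ by \eqref{e:constraint-2}. A Chernoff bound, or just Chebyshev since the variance is at most the mean, gives
\[
\mathbb P\bigl(N_k<\tfrac12\mathbb E N_k\bigr)\lesssim 2^{-k(n-\delta)},
\]
which is summable. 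So almost surely $N_k\gtrsim 2^{k(n-\delta)}$ for large $k$. Consequently
\[
S_k(\sigma,q)\gtrsim 2^{k(q\sigma-n)}\,2^{k\beta q}\,2^{k(n-\delta)}=2^{k\,(q(\sigma+\beta)-p(s+\beta))}.
\]
Since $s+\beta>0$ by \eqref{e:constraint-1} and $q>p$, the exponent is positive at $\sigma=s$. It therefore stays positive for
\[
\sigma>\alpha:=\frac{p(s+\beta)}{q}-\beta,
\]
and this $\alpha$ is strictly less than $s$. For such $\sigma$ the terms $S_k$ do not tend to zero, so by Theorem \ref{t:PL-wavelets} (applicable since $1<q<\infty$) $f\notin W^{\sigma,q}$. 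The event $\{N_k\gtrsim 2^{k(n-\delta)}\ \text{eventually}\}$ does not depend on $\sigma$ or $q$. So one full-measure event handles all $\sigma\in(\alpha,s)$ and all $q>p$ at once.

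I expect the main obstacle to be the concentration step for the sparse Bernoulli count in (iii). The success probability $2^{-\delta k}$ degenerates, and the argument works only because the expected count $2^{k(n-\delta)}$ still grows geometrically. Beyond that, what remains is the bookkeeping that makes the almost-sure statements uniform in the continuous parameters; monotonicity in $\alpha$, $\sigma$ and the $q$-independence of the event take care of it.
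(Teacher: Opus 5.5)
Your proposal is correct and takes essentially the same route as the paper: the expected coefficient norm for (i), concentration plus Borel--Cantelli on the non-intermittent coefficients for (ii), and a Borel--Cantelli lower bound on the roughly $2^{k(n-\delta)}$ intermittent coefficients for (iii). Your single Chebyshev bounds on sums of independent variables, the truncation $|W|\ge a$, and the parameter-independent exceptional events are clean streamlinings of the paper's Chernoff-then-Chebyshev steps. The one inaccurate point is your preliminary remark that $f\in L^2$ almost surely: this fails whenever $2\beta>\delta$, which is allowed for $p<2$, since then $\sum_k 2^{-kn}\sum_\lambda |c_{k,\lambda}|^2\gtrsim\sum_k 2^{k(2\beta-\delta)}=\infty$ almost surely. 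So Theorem~\ref{t:PL-wavelets} has to be used in its general form for $L^1$ or distributional $f$, exactly as the paper tacitly does; this does not affect the rest of your argument.
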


The proof is a straightforward application of elementary facts in probability, but in any case we include it in Appendix \ref{a:random}.
The aim of this second part of the paper is to generalize Kahane's approach in \cite{Kahane} to study the dimension of the level set of a random function as in Theorem \ref{t:random-Sobolev}. In particular we will prove the following result. 

\begin{theorem}\label{t:random-level-sets}
Consider $s,p, \beta$, and $\mathbb P$ as in Theorem \ref{t:random-Sobolev}. Assume that the basis of wavelets satisfies the following separation property for some fixed constant $C>0$:
\begin{itemize}
    \item[(S)] For every $x,x'\in \mathbb T^n$ {\em distinct} there is $k\in \mathbb N$ and $\lambda \in \Lambda_k$ such that 
    \begin{equation}\label{e:separation}
    C^{-1} 2^{-k} \leq d (x,x') \leq C 2^{-k}\, , \quad |\psi_{k, \lambda} (x)|\geq C^{-1}\, , \quad \mbox{and} \quad \psi_{k,\lambda} (x')=0\, .
    \end{equation}
\end{itemize}
Assume moreover that the random variables $W_{k,\lambda}$ are all Gaussians. Then
\begin{itemize}
\item[(i)] For $\mathbb P$-a.e. $f$, there is a set $Y(f)\subset \mathbb R$ of positive Lebesgue measure such that 
\[
{\rm dim}_H (Z(f) \cap \{y=f\}) = n-s \qquad \mbox{for every $y\in Y(f)$;}
\]
\item[(ii)] For a.e. $y$ there is a set $F (y)\subset L^1 (\mathbb T^n)$ with $\mathbb P (F(y))>0$ such that 
\[
{\rm dim}_H (Z(f)\cap \{y=f\}) = n-s \qquad \mbox{for every $f\in F(y)$.}
\]
\end{itemize}
\end{theorem}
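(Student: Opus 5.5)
The upper bound ${\rm dim}_H \leq n-s$ for a.e.\ $y$ does not come from Theorem \ref{t:coarea-fractional} directly. That theorem gives it for $f\in W^{\alpha,p}$ with $\alpha<s$ only with $n-\alpha$ in place of $n-s$. Letting $\alpha\uparrow s$ along a sequence, Theorem \ref{t:random-Sobolev}(i) gives $f\in W^{\alpha,p}$ a.s., and Theorem \ref{t:coarea-fractional} gives $\mathcal H^{n-\alpha}(\{f=y\})=0$ for a.e.\ $y$. Taking the intersection over countably many $\alpha$ yields ${\rm dim}_H\le n-s$ for a.e.\ $y$, almost surely. The real work is therefore the lower bound, and I would follow Kahane's scheme: construct the occupation (local time) measure and use a Frostman energy estimate.

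Fix $f$ and define the pushforward $\nu_f=\bar f_\#(\mathcal L^n\llcorner Z(f))$ on $\mathbb R$. If $\nu_f$ has an $L^2$ density $L(y)$, the local time, then for $\nu_f$-a.e.\ $y$ we disintegrate $\mathcal L^n$ into measures $\mu_y$ supported on $Z(f)\cap\{\bar f=y\}$ with $\int \mu_y\,dy=\mathcal L^n$. It then suffices to show that $\mathbb E\int\!\!\int\!\!\int \frac{d\mu_y(x)\,d\mu_y(x')}{|x-x'|^{\gamma}}\,dy<\infty$ for every $\gamma<n-s$. Frostman's lemma then gives ${\rm dim}_H\ge\gamma$ on the set $\{L>0\}$, which has positive measure because $\int L=1$. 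This proves (i), and (ii) follows by Fubini in $(f,y)$.

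Via Fourier inversion, both quantities reduce to Gaussian computations:
\[
\mathbb E\int L^2\,dy=\frac1{2\pi}\int_{\mathbb T^n}\!\int_{\mathbb T^n}\!\int_{\mathbb R}\mathbb E\, e^{i\xi(f(x)-f(x'))}\,d\xi\,dx\,dx',
\]
and similarly with weight $|x-x'|^{-\gamma}$ for the energy. We first condition on the Bernoulli variables $B_{k,\lambda}$. Given these, $f(x)-f(x')$ is centered Gaussian with variance $\Sigma^2(x,x')=\sum c_{k,\lambda}^2\,(\psi_{k,\lambda}(x)-\psi_{k,\lambda}(x'))^2$, so the $\xi$-integral equals $C\,\Sigma(x,x')^{-1}$. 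Everything hinges on a lower bound for $\Sigma$.

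This is where the separation property (S) enters. It provides a single $(k,\lambda)$ with $2^{-k}\sim|x-x'|$ and $|\psi_{k,\lambda}(x)-\psi_{k,\lambda}(x')|\ge C^{-1}$. Since $c_{k,\lambda}^2\ge 2^{-2ks}W^2$ regardless of $B$ (when $B=1$ we use $\beta>-s$), we get $\Sigma\gtrsim |W_{k,\lambda}|\,|x-x'|^{s}$. Taking expectations, $\mathbb E|W|^{-1}<\infty$ for a standard Gaussian because $|w|^{-1}$ is integrable in one dimension. Hence the conditional expectation is $\lesssim |x-x'|^{-s}$, uniformly in $B$, and we need
\[
\int\!\!\int |x-x'|^{-s-\gamma}\,dx\,dx'<\infty,
\]
which holds if $s+\gamma<n$. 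The case $\gamma=0$ gives $L\in L^2$ almost surely.

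The main obstacle is rigor in the local-time construction rather than the estimate. We must justify the identification of $\mu_y$ (defined via $\bar f$ on Lebesgue points) with the disintegration when $f$ is not continuous. I would handle this by truncating the series at scale $K$ to get continuous $f_K$, proving the bounds uniformly in $K$, and passing to limits. We also need $f_K\to\bar f$ at $\mu_y$-a.e.\ point. I would try to get this from the energy bound together with Theorem \ref{t:capacity}: the exceptional set has $\mathcal H^{n-\alpha}$-measure zero, so it is $\mu_y$-null once $\gamma>n-\alpha$.
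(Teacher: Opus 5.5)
Your overall architecture is the paper's: disintegrate $\mathcal L^n$ along $\bar f$, bound the $y$-integrated Riesz energy in expectation through a conditionally Gaussian computation, and use (S) to bound the conditional variance from below. Your remark that the upper bound needs Theorem \ref{t:coarea-fractional} with $\alpha\uparrow s$ is also correct.

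\textbf{The core estimate is misstated.} Conditionally on the $B_{k,\lambda}$, the variance of $f(x)-f(x')$ is $\Sigma_B^2=\sum_{k,\lambda}\tilde B_{k,\lambda}^2(\psi_{k,\lambda}(x)-\psi_{k,\lambda}(x'))^2$, where $\tilde B_{k,\lambda}=2^{-ks}(1-B_{k,\lambda})+2^{k\beta}B_{k,\lambda}$. The $W_{k,\lambda}$ are what is being integrated out, so they cannot appear in $\Sigma$. Your rescue via $\E|W|^{-1}<\infty$ is also false, since $\int_{\R}|w|^{-1}e^{-w^2/2}\,dw=\infty$. The correct bound $\Sigma_B\ge C^{-1}2^{-ks}\gtrsim d(x,x')^s$ holds deterministically, which is what the paper uses.

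\textbf{Your route to rigor cannot work.} Each $f_K$ is a finite sum of $C^1$ wavelets. Away from its critical set, its level sets are $C^1$ hypersurfaces carrying $|\nabla f_K|^{-1}\mathcal H^{n-1}$, which has infinite $\gamma$-energy for every $\gamma\ge n-1$. That is exactly the range you need, because $n-s>n-1$. In your computation this appears for $d(x,x')\ll 2^{-K}$: the separating wavelet from (S) is absent from $f_K$, you only have $\Sigma_{K,B}\le C_K\,d(x,x')$, and $d(x,x')^{-1-\gamma}$ is not integrable near the diagonal. So no bound uniform in $K$ exists.

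The paper regularizes in the target variable instead. It sets $\mu^y_\varepsilon=\delta_\varepsilon(f-y)\mathcal L^n$ with the full rough $f$, and $\int\E[I_\sigma(\mu^y_\varepsilon)]\,dy$ is then bounded uniformly in $\varepsilon$ because $\hat\delta$ is bounded. The remaining steps are:
\begin{itemize}
\item Berman's criterion gives $f_*\mathcal L^n\ll\mathcal L^1$.
\item Proposition \ref{p:convergence-det}, via a Lebesgue-point argument in $y$, gives $\mu^y_\varepsilon\rightharpoonup^*\mu^y$ for a.e.\ $y$.
\item Lower semicontinuity of the energy plus Fatou conclude.
\end{itemize}
Concentration of $\mu^y$ on $Z(f)\cap\{f=y\}$ comes for free from the disintegration. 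So you need neither pointwise convergence of $f_K$ nor your capacity argument. The latter would not close as stated anyway, since it requires $\gamma>n-\alpha>n-s$.

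\textbf{Part (ii) does not follow from (i) by Fubini.} Integrating (i) in $f$ only gives $\mathbb P(F(y))>0$ for $y$ in a set of positive measure, not for a.e.\ $y$. What is missing is that, for a.e.\ $y$, the level-$y$ measure is nontrivial with positive probability.

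The paper proves $c(y)=\lim_{\varepsilon\downarrow 0}\E[\mu^y_\varepsilon(\T^n)]>0$ for every $y$. It uses $\Psi(x)=\sum_{k,\lambda}\tilde B_{k,\lambda}^2\psi_{k,\lambda}(x)^2\ge c$, so each $f(x)$ is a nondegenerate Gaussian mixture. Weak$^*$ convergence alone does not transfer this positivity to $\mu^y$. The paper therefore adds the second-moment bound $\liminf_{\varepsilon}\E[\mu^y_\varepsilon(\T^n)^2]<\infty$, which follows from \eqref{e:Kahane} for a.e.\ $y$. Together these give $\E[\min\{\mu^y(\T^n),M\}]>0$ for $M$ large.
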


Recall here that $Z(f)$ denotes the set of Lebesgue points of $f$. See Definition~\ref{d:precise-representative}. The assumption (S) is very mild and in particular it is satisfied by a number of wavelets basis, we refer to Appendix \ref{a:multi-resolution} for a discussion of this point, see in particular Theorem \ref{t:separation}. Note moreover that, by the {\em deterministic} coarea inequality, namely Theorem \ref{t:coarea-fractional}, in both (i) and (ii) it suffices in fact to prove the lower bound ${\rm dim}_H (Z(f)\cap \{y=f\}) \geq n-s$. In particular, the part of the monograph \cite{Kahane} which is relevant to us is the one in which Kahane derives lower bounds on the Hausdorff dimension of the level sets. 

\smallskip

Kahane's approach is based on the construction of a suitable measure $\mu^y$ ``concentrated'' on $\{f=y\}$ (namely such that $\mu^y (\{f\neq y\})=0$). For the probabilities $\mathbb P$ considered in  \cite{Kahane}, $f$ is continuous $\mathbb P$-almost surely, and thus $\{f=y\}$ is a closed set. Since the measure $\mu^y$ is constructed through an approximation procedure as the weak$^*$ limit of suitable absolutely continuous approximations supported in the sets $\{y-\varepsilon \leq f \leq 1+\varepsilon\}$, in Kahane's case the fact that $\mu (\{f\neq y\})=0$ is an easy consequence of weak$^*$ convergence. In our case, for a large subset of the space of our paramaters $s, \beta$, and $p$, the random function $f$ is almost surely unbounded, and in particular discontinuous. Therefore showing that the appropriately constructed measure is ``concentrated'' in the set $Z(f) \cap \{f=y\}$ is a nontrivial task compared to \cite{Kahane}: to this end we exploit a technical device which could be useful in other contexts. Incidentally, this idea also streamlines Kahane's approach taking care of some annoying technicalities in his argument.

The next two sections will be dedicated to proving Theorem \ref{t:random-level-sets}.

\section{Measures concentrated on level sets and Berman's condition}

As already mentioned, since we have Theorem \ref{t:coarea-fractional} and Theorem \ref{t:random-Sobolev}, we focus on getting the inequality ${\rm dim}_H (\{f=y\}\cap Z(f)) \geq n-s$ in the two conclusions of Theorem \ref{t:random-level-sets}. We will use the following well-known criterion, which can be found in \cite[Chapter 8]{Mat95}.

\begin{lemma}\label{l:measure-criterion}
Assume $E\subset \mathbb T^n$ is a Borel set, $\sigma>0$, and $\mu$ a {\em nontrivial} measure concentrated on $E$ (i.e. $\mu (E^c)=0$) with the property that 
\[
I_\sigma [\mu] :=\int_{\T^n}\int_{\T^n}\frac{\,d\mu (x_1)\,d\mu (x_2)}{d(x_1,x_2)^{n-\sigma}} < \infty\, .
\]
Then the set $E$ has Hausdorff dimension at least $n-\sigma$. 
\end{lemma}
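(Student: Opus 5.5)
The plan is to prove this by contradiction via Frostman's lemma, which is a special case of the mass distribution principle. Fix any $0<t<n-\sigma$ and suppose, for contradiction, that $\mathcal H^{t}(E)=0$; it suffices to show that $\mu(E)=0$, contradicting nontriviality. Since $\mu(E^c)=0$, we have $\mu(E)=\mu(\mathbb T^n)>0$. The key quantity is the upper $(n-\sigma)$-density
\[
\Theta(x):=\limsup_{r\downarrow 0}\frac{\mu(B_r(x))}{r^{n-\sigma}}\, .
\]

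The first step shows $\Theta<\infty$ $\mu$-a.e. Finiteness of $I_\sigma[\mu]$ gives, for $\mu$-a.e. $x_1$,
\[
\phi(x_1):=\int\frac{d\mu(x_2)}{d(x_1,x_2)^{n-\sigma}}<\infty .
\]
Such a point carries no atom, so $\mu(\{x_1\})=0$, and trivially
\[
\frac{\mu(B_r(x_1))}{r^{n-\sigma}}\leq \phi(x_1)\, .
\]
In fact the contribution of $B_r(x_1)\setminus\{x_1\}$ to $\phi(x_1)$ tends to $0$ as $r\downarrow 0$ by dominated convergence, so even $\Theta(x_1)=0$ for $\mu$-a.e. $x_1$.

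Next, restrict to a set of uniform density bounds. For $M,\delta>0$ let
\[
E_{M,\delta}=\{x\in E:\ \mu(B_r(x))\leq M r^{n-\sigma}\ \text{for all } r<\delta\}\, .
\]
These sets are Borel, or one can handle them via outer measure. As $M\uparrow\infty$ and $\delta\downarrow0$ they exhaust $E$ up to a $\mu$-null set, so some $E_{M,\delta}$ has positive $\mu$-measure.

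Then cover. Given $\mathcal H^{t}(E_{M,\delta})=0$ and $t<n-\sigma$ (and more so if $t=n-\sigma$), for any $\varepsilon>0$ choose a cover $\{U_i\}$ of $E_{M,\delta}$ with $\operatorname{diam}U_i<\delta/2$ and $\sum(\operatorname{diam}U_i)^{n-\sigma}\leq\sum(\operatorname{diam}U_i)^t<\varepsilon$, discarding the sets that miss $E_{M,\delta}$. Each remaining $U_i$ contains a point $x_i\in E_{M,\delta}$ and lies in $B_{2\operatorname{diam}U_i}(x_i)$. Hence
\[
\mu(E_{M,\delta})\leq \sum_i M 2^{n-\sigma}(\operatorname{diam}U_i)^{n-\sigma}< C M\varepsilon\, .
\]
Letting $\varepsilon\to0$ gives $\mu(E_{M,\delta})=0$, the desired contradiction. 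So $\mathcal H^t(E)>0$ for every $t<n-\sigma$; in fact the argument gives $\mathcal H^{n-\sigma}(E)>0$. Therefore $\dim_H E\geq n-\sigma$.

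The only delicate point is measurability of $E_{M,\delta}$. The map $x\mapsto\mu(B_r(x))$ with open balls is lower semicontinuous, and one may take $r$ rational, using closed balls and monotonicity as needed, so the sets are Borel. Everything else is routine, and the torus metric $d$ causes no difficulty since all the arguments are local.
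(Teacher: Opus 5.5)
Your argument is correct, and it is the standard proof behind the reference the paper cites (the paper gives no proof of its own and simply invokes \cite[Chapter 8]{Mat95}): finiteness of the potential $\mu$-a.e.\ forces the upper $(n-\sigma)$-density of $\mu$ to vanish, and a mass-distribution covering argument then rules out $\mathcal{H}^{t}(E)=0$. Three cosmetic points: what you use is the mass distribution principle rather than Frostman's lemma; covering sets with $\operatorname{diam}U_i=0$ need a slightly enlarged ball (or the remark that $\mu$ has no atoms on $E_{M,\delta}$), since $B_0(x_i)=\emptyset$; and the case $\sigma\geq n$ is trivial.
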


Therefore, given a level set $E_y := \{f=y\}\cap Z(f)$ for which we want to achieve the lower bound ${\rm dim}_H (\{f=y\}\cap Z(f))\geq n-s$, our goal is to show the existence of a suitable measure $\mu^y$ concentrated on $E_y$ which is nontrivial and for which the energy functional $I_\sigma [\mu^y]$ is finite for every $\sigma <s$. Kahane's idea is to produce $\mu$ as the limit of suitable ``regularizations''. More precisely, consider a function $\delta \in C^\infty_c (\mathbb R)$ with $\delta \geq 0$, $\int \delta =1$, and ${\rm spt}\, (\delta) \subset [-1,1]$. Let $\delta_\varepsilon (t) := \varepsilon^{-1} \delta (\varepsilon^{-1} t)$ and, given a measurable function $f$ on $\mathbb T^n$, define the measure
\begin{equation}\label{e:approx-mu}
\mu^y_\varepsilon := \delta_\varepsilon (f-y) \mathcal{L}^n\, ,
\end{equation}
where $\mathcal{L}^n$ denotes the Lebesgue measure. 
Observe that, since $\delta_\varepsilon (f-y)$ is a bounded function, the latter is {\em always} a finite measure. We next want to study the limit as $\varepsilon\downarrow 0$ of $\mu^y_\varepsilon$ for random functions $f$ and generic values $y\in \mathbb R$. Our analysis is split in two parts. The first is deterministic and is summarized in the following proposition.

\begin{proposition}\label{p:convergence-det}
Assume $f: \mathbb T^n \to \mathbb R$ is a measurable map such that $f_* \mathcal{L}^n$ is absolutely continuous. Then for a.e. $y\in \mathbb R$ there is a Radon measure $\mu^y$ such that
\begin{itemize}
\item[(i)] $\mu^y (\{f\neq y\})=0$; 
\item[(ii)] $\mu^y_\varepsilon \rightharpoonup^\star \mu^y$ as $\varepsilon\downarrow 0$.
\end{itemize}
Moreover the family of measures $\mu^y$ satisfies the following ``disintegration'' identity
\begin{equation}\label{e:disinteg}
\int_{\mathbb T^n} \varphi (x)\, dx = \int_{\mathbb R} \int \varphi (x) \, d\mu^y (x)\, dy
\qquad \forall \varphi \in C( \mathbb T^n)\, .
\end{equation}
\end{proposition}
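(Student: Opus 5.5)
The plan is to build $\mu^y$ by disintegrating Lebesgue measure along the fibres of $f$, and then to identify it with the weak$^\star$ limit of the regularizations $\mu^y_\varepsilon$ through Lebesgue differentiation in the variable $y$. Throughout, $\{f\neq y\}$ in (i) is read with $f$ a fixed Borel representative.

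The first step is the disintegration. For $\varphi\in C(\mathbb T^n)$ consider the signed measure $\nu_\varphi := f_*(\varphi\,\mathcal L^n)$ on $\mathbb R$. Since $|\nu_\varphi|\le \|\varphi\|_\infty f_*\mathcal L^n$ and $f_*\mathcal L^n$ is absolutely continuous, $\nu_\varphi$ has an $L^1$ density $g_\varphi(y)$. I take a countable dense subset $\{\varphi_j\}$ of $C(\mathbb T^n)$, closed under rational linear combinations. Let $g:=g_1$ be the density of $f_*\mathcal L^n$. Outside a null set of $y$ the following hold simultaneously:
\begin{itemize}
\item every $y$ is a Lebesgue point of every $g_{\varphi_j}$;
\item $|g_{\varphi_j}(y)|\le \|\varphi_j\|_\infty g(y)<\infty$;
\item $\varphi_j\mapsto g_{\varphi_j}(y)$ is $\mathbb Q$-linear and positive.
\end{itemize}
This map therefore extends to a positive bounded linear functional on $C(\mathbb T^n)$. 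By the Riesz representation theorem it is given by a Radon measure $\mu^y$ of mass $g(y)$. Identity \eqref{e:disinteg} then holds for the $\varphi_j$ by construction, since $\int g_{\varphi_j}=\int\varphi_j$, and it extends to all $\varphi\in C(\mathbb T^n)$ by density and dominated convergence.

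The second step is convergence (ii). By definition of $\nu_\varphi$,
\[
\int\varphi\,d\mu^y_\varepsilon=\int \delta_\varepsilon(f(x)-y)\varphi(x)\,dx=\int\delta_\varepsilon(t-y)g_\varphi(t)\,dt .
\]
This converges to $g_\varphi(y)$ at every Lebesgue point of $g_\varphi$, because $\delta$ is bounded and supported in $[-1,1]$. So the convergence holds for all $\varphi_j$ at good $y$. The masses $\mu^y_\varepsilon(\mathbb T^n)\to g(y)$ are bounded, so a density argument gives $\mu^y_\varepsilon\rightharpoonup^\star\mu^y$.

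The third step, (i), is the main obstacle, since $f$ need not be continuous and weak$^\star$ convergence alone does not localize $\mu^y$ on $\{f=y\}$. I would instead prove a disintegration identity for bounded Borel functions of the product type $\Phi(x,f(x))$, namely
\[
\int \Phi(x,f(x))\,dx=\int_{\mathbb R}\int\Phi(x,y)\,d\mu^y(x)\,dy .
\]
It holds for $\Phi=\varphi(x)\chi(y)$ with $\varphi,\chi$ continuous, since $f_*(\varphi\,\mathcal L^n)=g_\varphi\,dy$. It extends to all bounded Borel $\Phi$ by a monotone class argument, noting that $y\mapsto\int\Phi(\cdot,y)\,d\mu^y$ is measurable by the same argument. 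Applying it to $\Phi(x,y)=\mathbf 1_{\{f(x)\neq y\}}$, the left side vanishes, so $\mu^y(\{f\neq y\})=0$ for a.e. $y$. Measurability of $\{(x,y):f(x)\neq y\}$ uses that $f$ is Borel. This yields (i) off a further null set of $y$, which completes the plan.
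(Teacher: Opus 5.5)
Your proof is correct. Its architecture matches the paper's: disintegrate $\mathcal L^n$ along $f$, then obtain (ii) by Lebesgue differentiation in $y$ on a countable dense family $\{\varphi_j\}$, plus control of the total masses. The difference is that you build the disintegration by hand, whereas the paper takes it from the abstract disintegration theorem (Theorem~\ref{t:disintegration}).

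The paper gets probability measures $\nu^y$ concentrated on $f^{-1}(y)$ and sets $\mu^y:=\rho(y)\nu^y$, with $\rho$ the density of $f_*\mathcal L^n$. Then (i) and \eqref{e:disinteg} are immediate. It proves (ii) by plugging $\phi_j(x)\delta_\varepsilon(f(x)-y_0)$ into the disintegration identity, using fibre concentration to rewrite the left-hand side as $\int g_j(y)\delta_\varepsilon(y-y_0)\,dy$.

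In your version $\mu^y$ is defined through Riesz from the densities $g_\varphi$ of $f_*(\varphi\,\mathcal L^n)$. So $\int\varphi\,d\mu^y=g_\varphi(y)$ by construction, and (ii) follows from the change of variables formula alone, independently of (i). You then recover (i) by the monotone class extension to bounded Borel integrands $\Phi(x,f(x))$, which amounts to reproving the existence part of the disintegration theorem in this special setting.

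The paper's route is shorter. Yours is self-contained, using only Riesz, Lebesgue differentiation and a monotone class argument. Your Step 3 also makes explicit the Borel version of the identity, for instance $\int_{\mathbb R}\mu^y(N)\,dy=|N|$ for Borel $N$. This is the form the paper implicitly relies on later when it deduces $\mu^y((Z(f))^c)=0$ for a.e. $y$ from an identity stated only for continuous $\varphi$. It also shows that modifying $f$ on a null set affects (i) only for a null set of $y$, which disposes of your Borel-representative caveat.

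One detail you should spell out. To make the class $\{\varphi(x)\chi(y)\}$ closed under products, you need $\int\varphi\,d\mu^y=g_\varphi(y)$ for a.e. $y$ and every $\varphi\in C(\mathbb T^n)$, not only for the $\varphi_j$. This follows from $|g_{\varphi_j}-g_\varphi|\le\|\varphi_j-\varphi\|_\infty\,g$ a.e. together with uniform approximation of $\varphi$ by a sequence from $\{\varphi_j\}$.
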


Note that there will in general be many level sets for which the measure $\mu^y$ is in fact trivial. However \eqref{e:disinteg} guarantees that it will be nontrivial for some set $Y(f)$ of values with positive Lebesgue measure. 

The second tool is probabilistic and ensures that we can apply Proposition \ref{p:convergence-det} almost surely in the case of Theorem \ref{t:random-level-sets}.

\begin{lemma}\label{l:Berman-OK}
Consider the probability measure $\mathbb P$ of Theorem \ref{t:random-level-sets}. Then $f_* \mathcal{L}^n$ is absolutely continuous for $\mathbb P$-almost every $f$.
\end{lemma}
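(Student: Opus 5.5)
Following Kahane and the classical Berman criterion, I will show that for $\mathbb P$-a.e. $f$ the pushforward $f_*\mathcal L^n$ has a density in $L^2(\mathbb R)$. Since $\mathbb T^n$ has finite measure, the pushforward is finite. By Plancherel, it has an $L^2$ density as soon as its Fourier transform
\[
\widehat{f_*\mathcal L^n}(\xi)=\int_{\mathbb T^n} e^{i\xi f(x)}\,dx
\]
is square integrable in $\xi$. So it suffices to prove
\[
\mathbb E\int_{\mathbb R}\Bigl|\int_{\mathbb T^n} e^{i\xi f(x)}dx\Bigr|^2 d\xi
=\int_{\mathbb T^n}\int_{\mathbb T^n}\int_{\mathbb R}\mathbb E\, e^{i\xi (f(x_1)-f(x_2))}\,d\xi\,dx_1\,dx_2<\infty .
\]
Finiteness of the expectation gives finiteness almost surely, hence absolute continuity almost surely. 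Here $f(x)$ denotes the pointwise series. It converges a.s. for a.e. $x$, since the sum of the variances of the terms, computed below, is finite once $s>0$ and $\delta<n$.

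The key computation is the inner expectation, done by conditioning on the Bernoulli variables $B=\{B_{k,\lambda}\}$. Given $B$, the difference $f(x_1)-f(x_2)$ is a centered Gaussian, because the $W_{k,\lambda}$ are independent standard Gaussians. Its variance is
\[
V_B(x_1,x_2)=\sum_{k,\lambda} a_{k,\lambda}^2\,\bigl(\psi_{k,\lambda}(x_1)-\psi_{k,\lambda}(x_2)\bigr)^2,
\qquad a_{k,\lambda}=2^{-ks}(1-B_{k,\lambda})+2^{k\beta}B_{k,\lambda}.
\]
Hence $\mathbb E[e^{i\xi(f(x_1)-f(x_2))}\mid B]=e^{-\xi^2 V_B/2}$, and integrating in $\xi$ gives $\sqrt{2\pi}\,V_B^{-1/2}$. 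By Tonelli, I must bound $\int\int \mathbb E[V_B(x_1,x_2)^{-1/2}]\,dx_1dx_2$.

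I then bound $V_B$ from below using the separation property (S) of Theorem \ref{t:random-level-sets}. For $x_1\neq x_2$, (S) provides a scale $k$ with $2^{-k}\sim d(x_1,x_2)$ and a wavelet $\psi_{k,\lambda}$ with $|\psi_{k,\lambda}(x_1)-\psi_{k,\lambda}(x_2)|\ge C^{-1}$. That single term contributes at least $C^{-2}a_{k,\lambda}^2$. Since $\beta>-s$, we have $a_{k,\lambda}\ge 2^{-ks}$ deterministically, whichever value $B_{k,\lambda}$ takes. Therefore
\[
V_B(x_1,x_2)\gtrsim d(x_1,x_2)^{2s}\quad\text{for every } B .
\]
Consequently $\mathbb E[V_B^{-1/2}]\lesssim d(x_1,x_2)^{-s}$, which is integrable on $\mathbb T^n\times\mathbb T^n$ because $s<1\le n$.

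The main obstacle is measure-theoretic rather than analytic. I need to justify that $f$ is jointly measurable in $(x,\omega)$, so that Fubini and Tonelli apply, and that conditioning on $B$ is legitimate. The plan for both is to work with partial sums $f_N$. For $f_N$ the same computation holds, with the $k$ from (S) included once $N\ge k$. I will take $L^2$ limits in $\mathbb E\int_{\mathbb T^n}$ and use Fatou's lemma on the Fourier side, noting that $\widehat{(f_N)_*\mathcal L^n}\to\widehat{f_*\mathcal L^n}$ pointwise along an a.s. subsequence converging a.e. in $x$. This gives the bound for the limit. One must also check that the $\mathbb P$-a.e. identification of $f$ as an element of $L^1$ matches this pointwise series, which follows from $L^2$ convergence.
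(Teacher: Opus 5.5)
Your core argument is the paper's proof. It uses Berman's $L^2$ criterion, the conditionally Gaussian characteristic function $e^{-\xi^2V_B/2}$, the deterministic bound $a_{k,\lambda}\ge 2^{-ks}$ (from $\beta>-s$), and (S) to get $V_B\gtrsim d(x_1,x_2)^{2s}$, which gives a bound by $\int\!\!\int d(x_1,x_2)^{-s}\,dx_1\,dx_2<\infty$. You integrate in $\xi$ before bounding $V_B$ and the paper bounds the exponent first; this makes no difference, and this part is correct.

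The measure-theoretic plan in your last paragraph goes beyond what the paper writes, and it would not work as described. Fatou applied to $\mathbb E\int_{\mathbb R}|\widehat{(f_N)_*\mathcal L^n}|^2\,d\xi$ needs a bound uniform in $N$. But (S) controls $V_B^{(N)}(x_1,x_2)$ only when the scale it provides is at most $N$, i.e. when $d(x_1,x_2)\gtrsim 2^{-N}$. Nothing controls the near-diagonal pairs. For $n=1$ and $C^2$ wavelets the uniform bound is in fact false. Each $f_N$ is then a $C^2$ periodic function, so $|\{f_N>\max f_N-t\}|\gtrsim t^{1/2}$ as $t\downarrow 0$, and by Cauchy--Schwarz this excludes an $L^2$ density. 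Hence $\int_{\mathbb R}|\widehat{(f_N)_*\mathcal L^1}|^2\,d\xi=\infty$ for every $N$; the $L^2$ density only appears in the limit. The fix is to use partial sums only at fixed $(x_1,x_2,\xi)$:
\begin{itemize}
\item Given $B$, $f_N(x_i)\to f(x_i)$ almost surely, so dominated convergence gives $\mathbb E[e^{i\xi(f(x_1)-f(x_2))}\mid B]=\lim_N e^{-\xi^2V^{(N)}_B/2}=e^{-\xi^2V_B/2}$. This is what the paper's infinite product tacitly uses.
\item Then apply Fubini at fixed $\xi$ (the integrand is bounded), and Tonelli in all variables for the nonnegative $e^{-\xi^2V_B/2}$.
\item Joint measurability of $f$ holds because it is a pointwise limit of the $f_N$.
\end{itemize}

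Your $L^2$ claims also fail in part of the admissible parameter range. One has $\mathbb E\|f_N\|^2_{L^2(\mathbb T^n)}\approx\sum_{k\le N}\bigl(2^{-2ks}+2^{k(2\beta-\delta)}\bigr)$, which diverges when $2\beta>\delta$, for instance with $p=\tfrac32$, $s=\tfrac12$, $\beta=2$, $n=4$. In that range neither summable variances nor $L^2$ limits in $\mathbb E\int_{\mathbb T^n}$ are available, and the relevant condition is not $\delta<n$ anyway. Pointwise almost sure convergence, and almost sure finiteness of $V_B$, come instead from Borel--Cantelli. Since $\delta>0$, $\sum_k 2^{-\delta k}<\infty$, so at a fixed $x$ only finitely many $B_{k,\lambda}$ equal $1$ among the boundedly many wavelets per scale not vanishing at $x$. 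The identification with the $L^1$ element comes from $L^1$ convergence of the series, since $\sum_k 2^{k(\beta-\delta)}<\infty$.
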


\subsection{Proof of Proposition \ref{p:convergence-det}}
We first recall the classical disintegration formula (cf. \cite[Theorem 5.3.1]{AGS}.  

\begin{theorem}[Disintegration theorem]\label{t:disintegration}
Let $X$ be a compact metric space with a Borel probability measure $\nu$. Let $Y$ be another measurable space and $f:X\to Y$ a Borel measurable map. Then there exists a family of Borel probability measures $\{\nu^y\}_{y\in Y}$ on $X$ with the following properties
\begin{itemize}
    \item[(i)] For $f_*\nu$-a.e. $y\in Y$, $\nu^y$ is concentrated on the fiber: $\nu^y(f^{-1}(y))=1$.
    \item[(ii)] For every bounded measurable function $\phi:X\to\R$, the function $y\mapsto\int_X\phi(x)\,d\nu^y(x)$ is a measurable function on $Y$, and we have the identity
    \[
    \int_X\phi(x)\,d\nu(x)=\int_Y \left(\int_X\phi(x)\,d\nu^y(x)\right)\,df_*\nu(y)\, .
    \]
\end{itemize}
Moreover, the family $\{\nu^y\}$ is unique in the following sense:
\begin{itemize}
    \item[(iii)] If $\{\tilde{\nu}^y\}$ is a another family satisfying (i) and (ii) above, then $\nu^y=\tilde{\nu}^y$ for $f_*\nu$-a.e. $y$.
\end{itemize}
\end{theorem}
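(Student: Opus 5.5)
The plan is to build the conditional measures $\nu^y$ by Radon--Nikodym differentiation along a countable dense family of test functions, and then to identify them through the Riesz representation theorem. Throughout I will assume, as in the application where $Y=\mathbb R$, that $Y$ is countably generated and that its points are measurable. The statement as written does not make this assumption explicit, but it is needed for (i) to make sense and to hold.

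\emph{Construction of the family.} For every bounded Borel $\phi\geq 0$, the push-forward $f_*(\phi\nu)$ satisfies $f_*(\phi\nu)\leq \|\phi\|_\infty f_*\nu$. Hence it has a density $g_\phi\in L^\infty(f_*\nu)$ with $0\leq g_\phi\leq\|\phi\|_\infty$. The map $\phi\mapsto g_\phi$ extends to signed $\phi$, and it is linear modulo $f_*\nu$-null sets.

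Since $X$ is compact, $C(X)$ is separable. Let $D\subset C(X)$ be a countable $\mathbb Q$-vector space that is dense and contains $1$. We can remove a single $f_*\nu$-null set $N$ such that for every $y\notin N$ the map $L_y(\phi):=g_\phi(y)$ has the following properties on $D$:
\begin{itemize}
\item[(a)] it is $\mathbb Q$-linear;
\item[(b)] it is positive;
\item[(c)] it satisfies $L_y(1)=1$ and $|L_y(\phi)|\leq\|\phi\|_\infty$.
\end{itemize}
This costs only countably many null sets. By density, $L_y$ extends to a positive, normalized, continuous linear functional on $C(X)$. The Riesz theorem then gives a Borel probability measure $\nu^y$ with $\int\phi\,d\nu^y=g_\phi(y)$ for all $\phi\in D$. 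For $y\in N$ we set $\nu^y$ equal to an arbitrary fixed probability measure.

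\emph{Measurability and identity (ii).} For $\phi\in D$, the map $y\mapsto\int\phi\,d\nu^y$ equals $g_\phi$ almost everywhere, so it is measurable. Moreover
\[
\int_Y g_\phi\, df_*\nu=\int_X\phi\,d\nu ,
\]
which is (ii) for $\phi\in D$. Uniform limits extend this to all $\phi\in C(X)$. A monotone class argument then extends it to all bounded Borel $\phi$: the class of bounded Borel $\phi$ for which measurability and (ii) hold is a vector space that is closed under bounded monotone limits and contains $C(X)$.

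\emph{Concentration (i), the main obstacle.} Apply the previous step to $\phi\,\mathbf 1_{f^{-1}(A)}$ with $A\subset Y$ measurable. This gives, for every measurable $A$,
\[
\int_A\Bigl(\int\phi\,\mathbf 1_{f^{-1}(A)}\,d\nu^y\Bigr) df_*\nu(y)=\int_X\phi\,\mathbf 1_{f^{-1}(A)}\,d\nu=\int_A\Bigl(\int\phi\,d\nu^y\Bigr)df_*\nu(y).
\]
Replacing $A$ by $A\cap B$ for arbitrary measurable $B$, I expect to show that
\[
\int\phi\,\mathbf 1_{f^{-1}(A)}\,d\nu^y=\mathbf 1_A(y)\int\phi\,d\nu^y \qquad \mbox{for $f_*\nu$-a.e. $y$.}
\]
Taking $\phi=1$ gives $\nu^y(f^{-1}(A))=\mathbf 1_A(y)$ for $f_*\nu$-a.e. $y$.

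Now let $\{A_j\}$ be a countable algebra generating the $\sigma$-algebra of $Y$ and separating points. Off one more null set, $\nu^y(f^{-1}(A_j))=\mathbf 1_{A_j}(y)$ holds for all $j$ simultaneously. Since points are measurable and $\{y\}$ is the intersection of those $A_j$ containing $y$ and the complements of the others, we have
\[
f^{-1}(y)=\bigcap_{j:\,y\in A_j}f^{-1}(A_j)\cap\bigcap_{j:\,y\notin A_j}f^{-1}(A_j^c).
\]
This is a countable intersection of sets of full $\nu^y$-measure, so $\nu^y(f^{-1}(y))=1$. This is where the countability hypothesis on $Y$ is genuinely used, and it is the delicate point of the argument.

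\emph{Uniqueness (iii).} Suppose $\{\tilde\nu^y\}$ also satisfies (i) and (ii). Applying (ii) to $\phi\,\mathbf 1_{f^{-1}(A)}$ and using (i), both $y\mapsto\int\phi\,d\nu^y$ and $y\mapsto\int\phi\,d\tilde\nu^y$ are versions of the Radon--Nikodym derivative $g_\phi$. Hence they agree almost everywhere for each $\phi\in D$. Since $D$ is countable, they agree for all $\phi\in D$ off a single null set. A Borel probability measure on compact $X$ is determined by its integrals against a dense subset of $C(X)$, so $\nu^y=\tilde\nu^y$ for $f_*\nu$-a.e. $y$.
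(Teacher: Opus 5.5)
Your proof is correct, but there is no proof in the paper to compare it with. The paper quotes this theorem as a black box from \cite[Theorem 5.3.1]{AGS}, where it is stated for separable metric spaces. You give instead the standard self-contained argument:
\begin{itemize}
\item Radon--Nikodym densities $g_\phi$ of $f_*(\phi\nu)$ with respect to $f_*\nu$, along a countable dense $\mathbb Q$-subspace $D\subset C(X)$;
\item Riesz representation off a single null set;
\item a monotone class extension;
\item concentration on fibres via a countable separating algebra.
\end{itemize}
This covers the only case the paper uses, $X=\mathbb T^n$ and $Y=\mathbb R$. The citation gives more generality in $X$ for free. Your route is elementary and also exposes a hypothesis missing from the printed statement, so your caveat is justified.

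For an arbitrary measurable space $Y$ the theorem is false. Take $f=\mathrm{id}:[0,1]\to[0,1]$, with Lebesgue measure on the source and the countable--cocountable $\sigma$-algebra on the target. Then (i) forces $\nu^y=\delta_y$ off a countable set, and $y\mapsto\nu^y([0,\tfrac12])$ cannot be measurable. Countable generation with measurable points is exactly what the separable metric setting of the cited source supplies.

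One step should be made explicit. The first display in your concentration argument is not (ii) applied to $\phi\,\mathbf 1_{f^{-1}(A)}$, because (ii) integrates over all of $Y$. What you are actually using is the localized identity
\[
\int_{f^{-1}(B)}\phi\,d\nu=\int_B\Bigl(\int\phi\,d\nu^y\Bigr)\,df_*\nu(y)\qquad\mbox{for all measurable $B\subset Y$ and bounded Borel $\phi$.}
\]
In other words, $y\mapsto\int\phi\,d\nu^y$ must be a version of $g_\phi$ for every bounded Borel $\phi$, not only for $\phi\in D$. This holds on $D$ by construction. It extends to $C(X)$ and then to all bounded Borel $\phi$ by the same uniform-limit and monotone class argument you used for (ii), since for fixed $B$ the admissible class is again a monotone vector space containing $C(X)$.

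Applying the localized identity to $\phi\,\mathbf 1_{f^{-1}(A)}$ with arbitrary $B$ gives
\[
\int_B\int\phi\,\mathbf 1_{f^{-1}(A)}\,d\nu^y\,df_*\nu(y)=\int_{A\cap B}\int\phi\,d\nu^y\,df_*\nu(y).
\]
Since $B$ is arbitrary, this yields the a.e.\ identity you say you expect. Note that (ii) by itself could not give this without already knowing (i).
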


We now apply Theorem \ref{t:disintegration} to the case of Proposition \ref{p:convergence-det}: $X= \mathbb T^n$, $Y= \mathbb R$, $\nu= \mathcal{L}^n$, $f: \mathbb T^n \to \mathbb R$, and $f_* \nu = \rho \mathcal{L}^1$ for some nonnegative integrable function $\rho$. Set therefore $\mu^y := \rho (y) \nu^y$. It is clear that $\mu^y$ satisfies both Proposition \ref{p:convergence-det}(i) and the identity \eqref{e:disinteg}. It suffices, therefore, to show Proposition \ref{p:convergence-det}(ii). To that end consider a countable dense set $\{\phi_j\}_{j=1}^\infty$ in $C^0(\mathbb T^n)$ and let also $\phi_0\equiv 1$. 

We plug the functions $\phi(x)=\phi_j(x)\delta_{\varepsilon}(f(x)-y_0)$ into the disintegration identity to obtain
\begin{equation}\label{e:disintegration-2}
\int_{\mathbb T^n} \phi_j(x)\,d\mu_{\varepsilon}^{y_0}=\int_\R (\int_{\mathbb T^n} \phi_j(x)\,d\mu^y(x))\delta_{\varepsilon}(y-y_0)\,dy\, .
\end{equation}
Let 
\[
g_j(y)=\int_{\mathbb T^n} \phi_j(x)\,d\mu^y(x)\, .
\]
Then $g_j\in L^1_y(\R)$ and the right hand side of \eqref{e:disintegration-2} is in fact the integral of the convolution $g_j*\delta_{\varepsilon}(\cdot - y_0)$. Now consider the set of points $y_0$ which are Lebesgue points for all $g_j$. These form a set of full (Lebesgue) measure and for any fixed one of them $y_0$ we have 
\[
\lim_{\varepsilon\to0}\int \phi_j(x)\,d\mu^{y_0}_{\varepsilon}=\int \phi_j(x)\,d\mu^{y_0}\, \qquad 
\mbox{for all $j$.}
\]
Finally, since $\phi_1=1$, we know $\mu_{\varepsilon}^{y_0}(\mathbb T^n)\to \mu^{y_0}(\mathbb T^n)$, so that the total mass of the measures is under control. In particular Proposition \ref{p:convergence-det}(ii) follows from the density of $\{\phi_j\}_{j\geq 1}$ and the weak$^\star$ compactness of bounded subsets in the space of Borel measures. 

\subsection{Proof of Lemma \ref{l:Berman-OK}} The argument is based on the following criterion introduced and proved by Berman, cf. \cite[Lemma 2.1]{Ber69}, in his study of local times of Gaussian processes.

\begin{theorem}[Berman's criterion]\label{t:Berman-crit}
Let $X$ be a metric space equipped with a finite (possibly signed) Borel measure $\mu$ and $f:X\to\R$ a Borel map satisfying the condition
\begin{equation}\label{e:Berman}
\int_\R\int_{X}\int_{X}e^{i\eta(f(x_1)-f(x_2))}\,d\mu(x_1)\,d\mu(x_2)\,d\eta<\infty\, .
\end{equation}
Then $f_*\mu\ll\mathcal{L}^1$ with a density $\rho\in L^1\cap L^2$. 
\end{theorem}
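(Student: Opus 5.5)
The plan is to prove Berman's criterion by a Fourier/Plancherel argument applied to the pushforward measure $\nu := f_*\mu$, which is a finite signed Borel measure on $\mathbb R$. Its Fourier transform is
\[
\hat\nu(\eta) = \int_{\mathbb R} e^{i\eta t}\, d\nu(t) = \int_X e^{i\eta f(x)}\, d\mu(x)\, ,
\]
a bounded continuous function of $\eta$. For real $\mu$, Fubini on the finite measure $\mu\otimes\mu$ gives
\[
|\hat\nu(\eta)|^2 = \hat\nu(\eta)\overline{\hat\nu(\eta)} = \int_X\int_X e^{i\eta(f(x_1)-f(x_2))}\, d\mu(x_1)\, d\mu(x_2)\, .
\]
So the inner double integral in \eqref{e:Berman} is nonnegative, and \eqref{e:Berman} states exactly that $\hat\nu\in L^2(\mathbb R)$. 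We read the hypothesis as convergence of an integral of a nonnegative function.

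By Plancherel there is $\rho\in L^2(\mathbb R)$ with $\hat\rho = \hat\nu$ in the $L^2$ sense. The key step is to show $\nu = \rho\,\mathcal L^1$ as distributions. For a Schwartz test function $\varphi$, Parseval for measures (Fubini again, since $\nu$ is finite and $\hat\varphi$ is integrable) gives
\[
\int \varphi\, d\nu = \frac{1}{2\pi}\int \hat\nu(\eta)\overline{\hat{\bar\varphi}(\eta)}\, d\eta ,
\]
up to the chosen normalization. The $L^2$ Parseval identity gives the same expression with $\int\varphi\rho$ on the left. Hence $\int\varphi\,d\nu=\int\varphi\rho$ for all Schwartz $\varphi$. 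Since finite measures, and $L^2$ functions viewed as distributions, are determined by their action on $C^\infty_c$, we conclude $\nu=\rho\,\mathcal L^1$. In particular $f_*\mu\ll \mathcal L^1$.

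It remains to see $\rho\in L^1$. Since $\rho\,\mathcal L^1=\nu$ is a finite measure, its total variation $\int|\rho|$ equals $|\nu|(\mathbb R)\le |\mu|(X)<\infty$. To check this, test against $C_c$ functions approximating $\operatorname{sgn}\rho$ on compacts, which is legitimate because $\rho\in L^1_{loc}$ as an $L^2$ function. Thus $\rho\in L^1\cap L^2$.

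The main obstacle is the bookkeeping for \emph{signed} $\mu$ and the meaning of \eqref{e:Berman}. The identity $|\hat\nu|^2=$ double integral needs $\mu$ real-valued, which "signed" ensures. The Parseval step for a measure against a Schwartz function needs only finiteness of $|\mu|$. The remaining parts are standard Fourier analysis.
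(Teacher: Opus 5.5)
Your proposal is correct and follows essentially the same route as the paper: you identify the inner double integral with $|\hat\nu(\eta)|^2$ for $\nu=f_*\mu$, use Plancherel to obtain $\rho\in L^2$ with $\hat\rho=\hat\nu$, and then match $\int\varphi\,d\nu$ with $\int\varphi\rho$ on Schwartz test functions via Fubini and Parseval. Your total-variation argument for $\rho\in L^1$ (namely $\int_{[-R,R]}|\rho| = |\nu|([-R,R])\le|\mu|(X)$) justifies the last step more cleanly than the paper's brief appeal to the density of Schwartz functions in $L^1$.
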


Since the proof is very concise and elegant, we include it for the reader's convenience. 

\begin{proof}
Fix any Schwarz function $\psi$ on $\R$, integrate it with respect to $f_*\mu$, and use the inversion formula on its Fourier transform $\hat\psi$ to write
\begin{equation}\label{e:we-will-Fubini-it}
\int_\R \psi(y)\,df_*\mu(y)=\int_X\psi(f(x))\,d\mu(x)=\frac{1}{\sqrt{2\pi}}\int_X \int_{\R}\hat{\psi}(\eta)e^{i\eta f(x)}\,d\eta\,d\mu(x)
\end{equation}
If we introduce the function 
\[
g (\eta) := \int_X e^{i\eta f(x)}\,d\mu(x)\, ,
\]
it is immediate to check that the left hand side of \eqref{e:Berman} is in fact $\|g\|_{L^2}^2$. Since this is finite, there is $\rho \in L^2(\R)$ such that $\hat{\rho}(\eta)= \frac{1}{\sqrt{2\pi}}g$. In particular, using Fubini and then Plancherel we can rewrite \eqref{e:we-will-Fubini-it} as 
\[
\int_\R \psi(y)\,df_*\mu(y)=\int_\R\psi(y)\rho(y)\,dy
\]
Since this holds for all Schwartz functions $\psi$, which are dense in $L^1 (\mathbb R)$, the conclusion of the lemma follows easily.
\end{proof}

\begin{proof}[Proof of Lemma \ref{l:Berman-OK}]
By Theorem \ref{t:Berman-crit}, it suffices to show
\begin{equation}\label{e:Berman-OK}
E:= \int \int_{\R}\int_{\T^n}\int_{\T^n} \exp \bigl(i\eta(f(x_1)-f(x_2))\bigr)\,dx_1\,dx_2\,d\eta\, d\mathbb P (f) <\infty
\end{equation}
First observe that 
\[
\int_{\T^n}\int_{\T^n} \exp \bigl(i\eta(f(x_1)-f(x_2))\bigr)\,dx_1\,dx_2=\left|\int_{\T^n} \exp\bigl(i\eta f(x)\bigr)\,dx\right|^2\ge0\, ,
\]
so we can exchange the integration in $f$ and $\eta$. Note moreover that the integrand is a complex function with modulus $1$ (given that $f$ is real-valued), and hence for every fixed $\eta$ it is absolutely integrable in the product measure $\mathcal{L}^n \times \mathcal{L}^n \times \mathbb P$. In particular we can again apply Fubini's theorem and compute first the expectation in $\mathbb P$, then integrate in $x_1$, $x_2$, and $\eta$.

Concerning the probability $\mathbb P$, given the independence of the random variables $B_{k, \lambda}$ and $W_{k,\lambda}$, we can represent it as a probability on the product space $\prod_{k,\lambda}\Omega^W_{k,\lambda}\times \Omega^B_{k,\lambda}$. Moreover, in order to simplify our notation, it is convenient to introduce the random variable $\tilde{B}_{k,\lambda} = 2^{-ks} (1- B_{k,\lambda}) + 2^{k\beta} B_{k, \lambda}$. Following this notation, and recalling that the $W_{k,\lambda}$ are normally distributed, we compute the expectation as
\begin{align*}
 &\E\bigr[\exp \bigl(i\eta(f(x_1)-f(x_2))\bigl)\bigr]\\
= &\prod_{k,\lambda} \int_{\Omega^B_{k,\lambda}} \int_{\Omega^W_{k,\lambda}}\exp \bigl(i\eta \tilde{B}_{k,\lambda}W_{k,\lambda}(\psi_{k,\lambda}(x_1)-\psi_{k,\lambda}(x_2))\bigr)\,dW_{k,\lambda}\,d\tilde{B}_{k, \lambda}\\
= &\prod_{k,\lambda} \int_{\Omega^B_{k,\lambda}} \exp \bigl(-\tfrac{1}{2}|\eta|^2 (\psi_{k,\lambda}(x_1)-\psi_{k,\lambda}(x_2))^2(\tilde{B}_{k,\lambda})^2\bigr)\,d\tilde{B}_{k,\lambda}\\
&\le  \exp \Big(-\tfrac{1}{2}|\eta|^2\sum_{k,\lambda}2^{-2sk}(\psi_{k,\lambda}(x_1)-\psi_{k,\lambda}(x_2))^2\Big)\, ,
\end{align*}
where we have used that $\tilde{B}_{k,\lambda} \geq 2^{-ks}$ almost surely.
Recall the separation property (S) in Theorem \ref{t:random-level-sets}: for every fixed $x_1\neq x_2$ there exists a pair $k,\lambda$ with
\begin{itemize}
\item $C^{-1} 2^{-k}\leq  d(x_1,x_2) \leq C 2^{-k}$,
\item $\psi_{k,\lambda}(x_2)=0$,
\item and $|\psi_{k,\lambda}(x_1)|\ge c$.
\end{itemize}
We see therefore that $\sum 2^{-2sk}(\psi_{k,\lambda}(x_1)-\psi_{k,\lambda}(x_2))^2\ge c\cdot d(x_1,x_2)^{2s}$ for all $x_1,x_2$, so we can obtain the bound
\[
E \leq \int \E\bigl[\exp \bigl(i\eta(f(x_1)-f(x_2))\bigr)\bigr]\, dx_1\, dx_2\, d\eta
\le \int \exp \bigl(-c|\eta|^2d(x_1,x_2)^{2s}\bigr)\, dx_1\, dx_2\, d\eta\, .
\]
Finally, we exchange the integrals once more and integrate in $\eta$ first to bound 
\[
E \leq C\int_{\T^n}\int_{\T^n}\frac{dx_1\,dx_2}{d(x_1,x_2)^s}<\infty
\]
This shows \eqref{e:Berman-OK} and thus completes the proof.
\end{proof}

\section{Kahane's estimate and proof of Theorem \ref{t:random-level-sets}}

Following Kahane's stragey, the last ingredient to prove Theorem \ref{t:random-level-sets} is given by the following estimate that we will prove later.

\begin{lemma}\label{l:Kahane}
Consider the probability measure $\mathbb P$ in Theorem \ref{t:random-level-sets} and define the measure $\mu^y_\varepsilon$ as in \eqref{e:approx-mu}. Then for every $\sigma >s$ there is a constant $C(\sigma)$, independent of $\varepsilon$, such that 
\begin{equation}\label{e:Kahane}
\int \mathbb E \left[I_\sigma (\mu^y_\varepsilon)\right]\, dy \leq C(\sigma)\, .
\end{equation}
Moreover, for every $y$ the limit
\begin{equation}\label{e:positive-limit}
\lim_{\varepsilon \downarrow 0} \mathbb E \left[\mu^y_\varepsilon (\mathbb T^n) \right]
\end{equation}
exists and it is a positive number $c(y)$. 
\end{lemma}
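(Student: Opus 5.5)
Everything runs through the Fourier representation $\delta_\varepsilon(t)=\frac{1}{2\pi}\int_{\mathbb R}\hat\delta(\varepsilon\eta)e^{i\eta t}\,d\eta$, where $\hat\delta$ is a Schwartz function with $|\hat\delta|\le 1$ and $\hat\delta(0)=1$. This turns expectations of products of $\delta_\varepsilon$'s into Gaussian characteristic functions. For the energy I write
\[
\mathbb E\, I_\sigma(\mu^y_\varepsilon)=\int\!\!\int \frac{\mathbb E\bigl[\delta_\varepsilon(f(x_1)-y)\,\delta_\varepsilon(f(x_2)-y)\bigr]}{d(x_1,x_2)^{n-\sigma}}\,dx_1\,dx_2 ,
\]
which is justified by Tonelli since everything is nonnegative.

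Integrating in $y$ first gives $\int\delta_\varepsilon(a-y)\delta_\varepsilon(b-y)\,dy=(\delta_\varepsilon*\check\delta_\varepsilon)(a-b)=\frac{1}{2\pi}\int|\hat\delta(\varepsilon\eta)|^2e^{i\eta(a-b)}\,d\eta$. Taking expectations and using Fubini (the integrand is bounded by $|\hat\delta(\varepsilon\eta)|^2\in L^1$) yields
\[
\int\mathbb E\bigl[\delta_\varepsilon(f(x_1)-y)\delta_\varepsilon(f(x_2)-y)\bigr]dy=\frac{1}{2\pi}\int|\hat\delta(\varepsilon\eta)|^2\,\mathbb E\bigl[e^{i\eta(f(x_1)-f(x_2))}\bigr]\,d\eta .
\]
Next I take the computation from the proof of Lemma \ref{l:Berman-OK}. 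It uses Gaussianity of $W_{k,\lambda}$, the bound $\tilde B_{k,\lambda}\ge 2^{-ks}$ and the separation property (S), and gives $0\le\mathbb E[e^{i\eta(f(x_1)-f(x_2))}]\le\exp(-c|\eta|^2d(x_1,x_2)^{2s})$. The positivity matters: it lets me drop $|\hat\delta|^2\le1$ uniformly in $\varepsilon$. Integrating in $\eta$ bounds the inner quantity by $C\,d(x_1,x_2)^{-s}$. Hence
\[
\int \mathbb E\, I_\sigma(\mu^y_\varepsilon)\,dy\le C\int_{\mathbb T^n}\!\int_{\mathbb T^n} d(x_1,x_2)^{-(n-\sigma)-s}\,dx_1\,dx_2 ,
\]
and this is finite exactly when $n-\sigma+s<n$, i.e. $\sigma>s$. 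That proves \eqref{e:Kahane}.

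For \eqref{e:positive-limit} I write $\mathbb E[\mu^y_\varepsilon(\mathbb T^n)]=\int_{\mathbb T^n}\mathbb E[\delta_\varepsilon(f(x)-y)]\,dx=\int_{\mathbb T^n}(\delta_\varepsilon*p_x)(y)\,dx$, where $p_x$ is the law of $f(x)$. I expect this to be the main obstacle. Since $f$ is not Gaussian but a Bernoulli mixture of Gaussians, $p_x$ is a mixture of centred normals with random variance $V(x)=\sum_{k,\lambda}\tilde B_{k,\lambda}^2\psi_{k,\lambda}(x)^2$. Conditioning on the $B$'s makes $f(x)$ centred Gaussian with variance $V(x)$, and I need $V(x)$ to be bounded below and finite a.s. for a.e. $x$. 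The lower bound $V(x)\ge v_0(x)>0$ comes from the deterministic part $\sum 2^{-2ks}\psi_{k,\lambda}(x)^2$, and the upper bound $\mathbb E V<\infty$ holds for a.e. $x$ by constraint \eqref{e:constraint-2} on $\delta$.

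Then $\mathbb E[\delta_\varepsilon(f(x)-y)]=\mathbb E_B\bigl[(\delta_\varepsilon*\gamma_{V(x)})(y)\bigr]$, with $\gamma_V$ the centred normal density of variance $V$. Each term is bounded by $(2\pi v_0(x))^{-1/2}$ and converges to $\gamma_{V(x)}(y)$. If $v_0$ is bounded below uniformly on $\mathbb T^n$, which I expect follows from the finite overlap and normalization of the wavelets together with (S), dominated convergence in $(x,B)$ gives the limit
\[
c(y)=\int_{\mathbb T^n}\mathbb E_B\bigl[(2\pi V(x))^{-1/2}e^{-y^2/2V(x)}\bigr]\,dx .
\]
This is strictly positive because $V(x)<\infty$ a.s. If the uniform lower bound fails, I would instead use the Fourier formula with the bound $|\mathbb E e^{i\eta f(x)}|\le e^{-c\eta^2 v_0(x)}$ and integrability of $v_0^{-1/2}$.
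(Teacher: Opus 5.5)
Your proposal follows the paper's strategy: Fourier-represent $\delta_\varepsilon$, condition on the Bernoulli variables so that only Gaussian characteristic functions appear, and use $\tilde B_{k,\lambda}\ge 2^{-ks}$ together with (S). The main difference is in the energy bound.

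\begin{itemize}
\item The paper computes the joint characteristic function of $(f(x_1),f(x_2))$ and evaluates the Gaussian integral with the quadratic form $P,Q,R$. Only then does it integrate in $y$ to reach $\mathbb E_{\tilde B}[P^{-1/2}]$.
\item You integrate in $y$ first. This collapses everything onto the characteristic function of $f(x_1)-f(x_2)$, which is already bounded in the proof of Lemma \ref{l:Berman-OK}.
\end{itemize}

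Both routes end with the same bound $C\,d(x_1,x_2)^{-s}$. Your ordering is shorter. It also makes rigorous the step where the paper bounds $\hat\delta$ by a constant in front of a complex, $y$-dependent integrand. For \eqref{e:positive-limit}, working with the Gaussian-mixture density in physical space rather than in Fourier variables is equivalent to the paper's argument.

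Two points in your second part need correcting.

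First, the claim that $\mathbb E\,V(x)<\infty$ by \eqref{e:constraint-2} is false in part of the parameter range. We have $\mathbb E[\tilde B_{k,\lambda}^2]\ge 2^{k(2\beta-\delta)}=2^{k((2-p)\beta-ps)}$. This diverges when $p<2$ and $(2-p)\beta>ps$, for example $n=3$, $s=0.1$, $p=1.1$, $\beta=2.5$. By (S), every $x$ sees some $|\psi_{k,\lambda}(x)|\ge C^{-1}$ at infinitely many scales, so in that regime $\mathbb E\,V(x)=\infty$ for every $x$.

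What you actually need is only $V(x)<\infty$ almost surely, and that is true:
\begin{itemize}
\item At each scale only boundedly many $\psi_{k,\lambda}$ are nonzero at $x$.
\item Since $\delta>0$, $\sum_k 2^{-\delta k}<\infty$, so by Borel--Cantelli only finitely many indices with $B_{k,\lambda}=1$ contribute to $V(x)$.
\item The remaining part $\sum_k 2^{-2ks}\sum_\lambda\psi_{k,\lambda}(x)^2$ converges.
\end{itemize}

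Second, the uniform lower bound on $v_0$ is immediate rather than something to \emph{expect}. Given $x$, apply (S) to $x'=x+(\tfrac12,0,\dots,0)$. This gives $(k,\lambda)$ with $2^{-k}\ge (2C)^{-1}$ and $|\psi_{k,\lambda}(x)|\ge C^{-1}$, hence $v_0(x)\ge C^{-2}(2C)^{-2s}$. This is exactly the paper's bound $\Psi\ge c$, so your fallback is unnecessary.

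With these two fixes, the dominated convergence argument and the positivity of $c(y)$ go through as you wrote them.
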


\subsection{Proof of Theorem \ref{t:random-level-sets}} By Theorem \ref{t:coarea-fractional} and Fubini, we already know that 
\[
{\rm dim}_H (\{f=y\}\cap Z(f)) \leq n-s \qquad \mbox{for $\mathcal{L} \times \mathbb P$-a.e. $(y,f)$.}
\]
We thus focus on the opposite inequality.

\medskip

Consider the first statement. Thanks to Lemma \ref{l:Berman-OK}, for $\mathbb P$-almost all $f$ we can apply Theorem \ref{t:Berman-crit} and Proposition \ref{p:convergence-det} conclude that there is a set $\bar{Y} (f)$ of full measure such that $\mu^y_\varepsilon \rightharpoonup^\star \mu^y$, where $\mu^y$ are the measures introduced in Proposition \ref{p:convergence-det}. Fix now any $\sigma>s$. Observe that, by lower semicontinuity of the energy $I [\mu]$ and Fatou we have 
\[
\int I_\sigma \left[\mu^y\right]\, dy
\leq \int \liminf_{\varepsilon\downarrow 0} I_\sigma \left[\mu^y_\varepsilon\right]\, dy
\leq \liminf_{\varepsilon\downarrow 0} \int I_\sigma \left[\mu^y_\varepsilon\right]\, dy\, ,
\]
while again by Fatou and in particular using Kahane's estimate \eqref{e:Kahane}, we conclude that 
\[
\int I_\sigma \left[\mu^y\right]\, dy < \infty
\]
for $\mathbb P$-a.e. $f$. Consider now the set $\tilde{Y} (f) \subset \bar{Y} (f)$ for which the measure $\mu^y$ is nontrivial and $I_\sigma [\mu^y]$. Since we can test the identity \eqref{e:disinteg} with $\varphi \equiv 1$, we conclude that $\tilde{Y} (f)$ must have positive measure. Observe also that, thanks again to \eqref{e:disinteg} $\mu^y ((Z(f))^c) =0$ for a.e. $y$, and thus we can also assume that $\mu^y$ is concentrated on $\{y=f\}\cap Z(f)$. Hence, applying Lemma \ref{l:measure-criterion} we conclude that the Hausdorff dimension of $\{y=f\}\cap Z(f)$ must be at least $n-\sigma$. We can now apply this argument for a sequence $\sigma_k\downarrow s$ to conclude that, for $\mathbb P$-almost every $f$ there is a set $Y(f)$ of positive measure such that ${\rm dinm}_H (Z(f)\cap \{y=f\})\geq n-s$.

\medskip

As for the second statement using Fubini and the argument above we know that, for a.e. $y$ there is a set $F_1(y)$ of probability $1$ with the properties that, for any $f\in F_1(y)$ the following three properties hold at at the same time:
\[
I_{s+n^{-1}} \bigl[\mu^y\bigr] < \infty \qquad \forall n \in \mathbb N\setminus \{0\}\, ,
\]
$\mu^y_\varepsilon \rightharpoonup^* \mu^y$, and $\mu^y$ is concentrated on $\{f=y\}\cap Z(f)$. Roughly speaking, we collect functions $f$ with a well-behaved measure $\mu^y$ that can be approximated by the measures $\mu^y_\epsilon$, and this happens almost surely.
Observe moreover that, since $(\mu^y_\varepsilon (\mathbb R^n))^2 \leq  I_{2s} \bigl[\mu^y_\varepsilon \bigr]$, we can also use Fatou and \eqref{e:Kahane} to conclude
\[
L:= \liminf_\varepsilon \mathbb E \left[ (\mu^y_\varepsilon (\mathbb R^n))^2\right]
< \infty\, .
\]
Consider now the random variable $\min\{\mu^y_{\epsilon}(\T^n),M\}$ for some large $M>0$ determined later. Notice that
\[
\E \left[\min\{\mu^y_\epsilon(\T^n),M\}\right]\ge \int_{\{\mu^y_\epsilon(\T^n)<M\}}\mu^y_\epsilon(\T^n)\,d\mathbb P\ge \E[\mu^y_\epsilon(\T^n)]-\frac{1}{M}\E\bigl[(\mu^y_\epsilon (\mathbb T^n))^2\bigr]\, .
\]
We now use again Fatou and the convergence of $\mu^y_\varepsilon (\mathbb T^n)$ to $\mu^y (\mathbb T^n)$ to get
\begin{align*}
\E \left[\min\{\mu^y(\T^n),M\}\right]
& =\int \limsup_{\epsilon\downarrow0}\min\{\mu^y_\epsilon(\T^n),M\}\, d\mathbb P (f)\\
&\geq \limsup_{\varepsilon \downarrow 0} \left(\E[\mu^y_\epsilon(\T^n)]-\frac{1}{M}\E\bigl[(\mu^y_\epsilon (\mathbb T^n))^2\bigr]\right)\\
& = \lim_{\varepsilon\downarrow 0} \E[\mu^y_\epsilon(\T^n)] -
\frac{1}{M}  \liminf_\varepsilon \mathbb E \left[ (\mu^y_\varepsilon (\mathbb R^n))^2\right]\\
&= c(y) - \frac{L}{M}\, ,
\end{align*}
where we used \eqref{e:positive-limit}. Since $c(y)$ is positive and we can choose $M$ large, we conclude that $\E \left[\min\{\mu^y(\T^n),M\}\right] >0$ for $M$ large enough. In other words, there is a set $F_2(y)$ of positive probability for which the measure $\mu^y$ is nontrivial. But then for every $f$ in the set $F(y) = F_1(y) \cap F_2(y)$ (which has the same probability as $F_2(y)$), we can apply Lemma \ref{l:measure-criterion} to conclude that the Hausdorff dimension of $\{f=y\}\cap Z(f)$ is at least $n-s$. 

\subsection{Proof of Lemma \ref{l:Kahane}} We use Fubini's Theorem and the inversion formula for the Fourier transform to write
\begin{align*}
E (y) := &\E \left [\int_{\T^n}\int_{\T^n}\frac{\,d\mu_\epsilon^y(x_1)\,d\mu_\epsilon^y(x_2)}{d(x_1,x_2)^{n-\sigma}}\right]\\
 = &C\int_{\T^n}\int_{\T^n}\Big(\int_\R\int_\R \E \bigl[e^{i\eta_1f(x_1)+i\eta_2f(x_2)}\bigr]e^{-i\eta_1y-i\eta_2y}\hat{\delta}(\epsilon\eta_1)\hat{\delta}(\epsilon\eta_2)\,d\eta_1\,d\eta_2\Big)\frac{\,dx_1\,dx_2}{d(x_1,x_2)^{n-\sigma}}.
\end{align*}
We now use the same setup as in the proof of Lemma \ref{l:Berman-OK} to compute further
\begin{align*}
& \E \bigl[ \exp \bigl(i\eta_1f(x_1)+i\eta_2f(x_2)\bigr)\bigr]\\
= & \prod_{k, \lambda} \int_{\Omega^B_{k,\lambda}} \exp \bigl(-\tfrac{1}{2} (\tilde{B}_{k,\lambda})^2(\eta_1\psi_{k,\lambda}(x_1)+\eta_2\psi_{k,\lambda}(x_2))^2\bigr) d\tilde{B}_{k,\lambda}\, .
\end{align*}
Thus, if we denote by $d \tilde{B}$ the probability in the product space $\Omega^B := \prod \Omega^B_{k, \lambda}$, 
the expectation $E$ can be written as
\[
E (y) =C\E_{\tilde{B}} \left[\int_{\T^n}\int_{\T^n}\left(\int_\R\int_\R e^{-\Phi }\hat{\delta}(\epsilon\eta_1)\hat{\delta}(\epsilon\eta_2)\,d\eta_1\,d\tilde{\eta}_2\right)\frac{\,dx_1\,dx_2}{d(x_1,x_2)^{n-\sigma}}\right]\]
where $\tilde{\eta}_2=\eta_1+\eta_2$, and
\begin{align*}
\Phi
= &\frac{1}{2}\left(\sum (\tilde{B}_{k,\lambda})^2(\psi_{k,\lambda}(x_1)-\psi_{k,\lambda}(x_2))^2\right)\eta_1^2\\
&+\left(\sum (\tilde{B}_{k,\lambda})^2\psi_{k,\lambda}(x_2)(\psi_{k,\lambda}(x_1)-\psi_{k,\lambda}(x_2))\right)\eta_1\tilde{\eta}_2\\
&+\frac{1}{2}\left(\sum (\tilde{B}_{k,\lambda})^2\psi_{k,\lambda}(x_2)^2\right)\tilde{\eta}_2^2\\
&+iy\tilde{\eta}_2\\
=: &\frac{1}{2}P\eta_1^2+Q\eta_1\tilde{\eta}_2+\frac{1}{2}R\tilde{\eta}_2^2+iy\tilde{\eta}_2.
\end{align*}
Recall the formula
\begin{equation}\label{e:domination}
\int_{\R}\int_{\R} e^{-\frac{1}{2}P\eta_1^2-Q\eta_1\tilde{\eta}_2-\frac{1}{2}R\tilde{\eta}_2^2-iy\tilde{\eta}_2}\,d\eta_1\,d\tilde{\eta}_2=\frac{2\pi}{\sqrt{PR-Q^2}}\exp \left(\frac{-Py^2}{2(PR-Q^2)}\right)\, .
\end{equation}
Then we integrate in $y$ and use Fubini and the bound $\|\hat{\delta}\|_\infty \leq C$ to estimate
\begin{align*}
\int \mathbb E \left[I_\sigma (\mu^y_\varepsilon)\right]\, dy &
\leq C \mathbb E_{\tilde B} \left[\int_{\mathbb T^n} \int_{\mathbb T^n} \int_{\mathbb R} \frac{2\pi}{\sqrt{PR-Q^2}}\exp \left(\frac{-Py^2}{2(PR-Q^2)}\right) \, dy\,  d(x_1, x_2)^{s-n}\, dx_1, dx_2
\right]\\
&= C \mathbb E_{\tilde B} \left[ \int_{\mathbb T^n} \int_{\mathbb T^n} \frac{d(x_1, x_2)^{\sigma-n}}{\sqrt{P}} \, dx_1\, dx_2 \right]\, .
\end{align*}
Recall now the separation property (S) of Theorem \ref{t:random-level-sets}: for every $x_1\neq x_2$ there exists a $k,\lambda$ with 
\begin{itemize}
    \item $C^{-1} d (x_1, x_2) \leq 2^{-k} \leq C d(x_1,x_2)$,
    \item $\psi_{k,\lambda}(x_2)=0$,
    and $|\psi_{k,\lambda}(x_1)|\ge c$. 
\end{itemize}
Using also $\tilde{B}_{k,\lambda}\ge 2^{-ks}$ a.s., we see that $P \ge c\cdot d(x_1,x_2)^{2s}$ for some positive constant $c>0$. Hence we can estimate
\[
\int \mathbb E \left[I_\sigma (\mu^y_\varepsilon)\right]\, dy
\leq  C \int_{\mathbb T^n} \int_{\mathbb T^n} d(x_1, x_2)^{\sigma-s-n}\, dx_1\, dx_2\, .
\]
Using that $\sigma>s$, we have therefore proved \eqref{e:Kahane}.

\medskip

For the second part of the lemma we use an entirely analogous strategy. Let us write 
\[
\mathbb E \left[\mu^y_\varepsilon (\mathbb T^n)\right]
= \int_{\T^n} \int_{\mathbb R} \mathbb E \left[ e^{i\eta f(x)}\right]\, e^{-i\eta y}\hat{\delta} (\varepsilon \eta) \, d\eta\, dx\, .
\]
We then can follow the same strategy as above splitting the expectation and computing the one with respect to the Gaussian random variables. We can thus introduce the function
$\Psi(x)=\sum (\tilde{B}_{k,\lambda})^2(\psi_{k,\lambda})^2(x)$, and use the separation property to show that $\Psi (x) \geq c$ almost surely. Using the dominated convergence theorem, $\hat{\delta} (0) =1$, and Fubini we arriving at computing  
\[\
\lim_{\varepsilon\downarrow 0}\E[\mu^y_\varepsilon(\T^n)] = C \int_{\T^n}\E_{\tilde{B}}\left[\frac{1}{\sqrt{\Psi(x)}}e^{-\frac{y^2}{2\Psi(x)}}\right]\,dx\, \, ,
\]
For the limit to be zero the function $\Psi (x)$ would have to be almost surely $\infty$, which is obviously not the case. 

\appendix

\section{Coarea inequality for H\"older functions}\label{a:coarea}

We present here an argument which uses \cite[Theorem 2.10.25]{Federer} as a black box and was pointed out to the first author by Bernd Kirchheim. \cite[Theorem 2.10.25]{Federer} implies the following (the statement is more general, but it is not needed for our purposes. If $(X,d)$ and $(Y, \delta)$ are two metric spaces, $f:X\to Y$ is a Lipschitz map and $Y$ is $\mathcal{H}^a$ $\sigma$-finite, then
\begin{equation}\label{e:general-metric}
\int \mathcal{H}^b (f^{-1} (\{y\}))\, d\mathcal{H}^a (y) \leq C (a,b) \bigl({\rm Lip\,} (f)\bigr)^a \mathcal{H}^{a+b} (X)\, .
\end{equation}
We now consider $Y= \mathbb R$ and we endow it with $\delta=e$ the standard Euclidean distance $e(t,\tau)=|t-\tau|$. Then we consider $X=Q_1\subset \mathbb R^n$ and endow it with the snowflake distance $d (p,q) := |p-q|^s$. On $\mathbb R^n$ as well we denote by $e$ the usual Euclidean distance $e(p,q)=|p-q|$. For subsets in $\mathbb R^n$ we let $\bar{\mathcal{H}}^k$ denote the Hausdorff measure relative to the metric $d$ and by $\mathcal{H}^k$ the Hausdorff measure with respect to the Euclidean distance. On $Y=\R$, the measure $\mathcal{H}^1$ coincides with the usual Lebesgue measure. 

From the definition we immediately see that 
\begin{equation}\label{e:metric-trick}
    \bar{\mathcal{H}}^k (A) = C (s, k) \mathcal{H}^{s k} (A) \qquad \forall A\subset \mathbb R^n\, .
\end{equation} 
In particular $\bar{\mathcal{H}}^{n/s} (Q_1)$ is finite. Moreover, if $f: Q_1 \to \mathbb R$ is $s$-H\"older continuous,
namely
\[
[f]_s := \sup_{x\neq y} \frac{|f(x)-f(y)|}{|x-y|^s}\, ,
\]
then $f: X \to Y$ is Lipschitz when we endow $X$ with the snowflake distance, and the Lipschitz constant $L$ of $f$ is exactly $[f]_s$. We can thus apply \eqref{e:general-metric} to get 
\[
\int \bar{\mathcal{H}}^{n/s -1} (f^{-1} (\{y\}))\, dy \leq C (n, s) [f]_s \bar{\mathcal{H}}^{n/s} (Y)\, .
\]
But then \eqref{e:metric-trick} implies \eqref{e:coarea-Hoelder}. 

\section{Proof of Theorem \texorpdfstring{\ref{t:random-Sobolev}}{random}}\label{a:random}

Throughout the proof the parameters $s,p$, and $\beta$ are fixed as in the statement. 
In order to ease our notation, we let $W$ be a random variable which is distributed according to the same law as all the $W_{k,\lambda}$. We use, moreover, the shorthand notation ${\rm Bin}\, (N, \gamma)$ for the binomial distribution, namely the law for the number of heads in $N$ independent coin tosses where probability of getting head in a single coin toss is $\gamma$. We recall moreover the Cherhoff bounds: if $X$ is the random variable counting the number of heads, then the probability $P$ of getting more than $(1+\sigma)\gamma N$ heads is estimated by
\begin{equation}\label{e:Chernoff}
P (\{X\geq (1+\sigma) \gamma N\}) \leq \left(\frac{e^\sigma}{(1+\sigma)^{1+\sigma}}\right)^{\gamma N}\, .
\end{equation}

\subsection{Proof of (i)} Fix $\alpha <s$. We will show
\begin{equation}\label{e:expectation-Wap}
\mathbb E \left[\|f\|_{W^{\alpha, p}}^p\right] < \infty\, ,
\end{equation}
from which (i) follows immediately.
Using Theorem \ref{t:PL-wavelets}, the linearity of the expectation, and the independence of the two random variables $W_{k,\lambda}$ and $B_{k, \lambda}$ for each pair $(k, \lambda)$, we can estimate 
\begin{align*}
\mathbb E \left[\|f\|_{W^{\alpha, p}}^p\right] &\leq C \mathbb E \left[ \|f\|^p_{\alpha, p} \right]
= C \sum_{k\in \mathbb N} \sum_{\lambda\in \Lambda_k} \mathbb E [2^{k (p \alpha -n)} |c_{k, \lambda}|^p]\\
&= C_p \sum_{k\in \mathbb N} 2^{k (p \alpha -n)} \sum_{\lambda \in \Lambda_k} [ 2^{-kp s} (1-2^{-\delta k}) + 2^{\beta k p} 2^{-\delta k}]\, ,
\end{align*}
where the constant $C_p$ depends on the expectation of $|W|^p$. 
Substituting the formula for $\delta$ (note in particular that $\delta$ is chosen so that $\beta p - \delta = -p s$), bounding $1-2^{-\delta k} \leq 1$, and considering that $|\Lambda_k|=2^{kn}$, we get 
\[
\mathbb E \left[\|f\|_{W^{\alpha, p}}^p\right] 
\leq C_p \sum_{k\in \mathbb N} 2^{kp (\alpha - s) + kn} |\Lambda_k|= C_p \sum_{k\in \mathbb N} 2^{-kp (s-\alpha)} < \infty\, .
\]

\subsection{Proof of (ii)} Fix $\alpha > s$. Consider a random function $f$, and for each $k$ let $S(k, f)$ be the set of $\lambda \in \Lambda_k$ such that $B_{k, \lambda} (f)=0$. Consider independent coin tosses where the probability of each toss being head is $2^{-k\delta}$. Then 
\[
p (k) := \mathbb P \left(\{f: |S(f,k)|< \tfrac{1}{2} |\Lambda_k|\}\right)
\]
is the probability that among $2^{kn}$ independent such coin tosses more than half of them gives head. The law of the cardinality of the complement set $L (f,k) = \Lambda_k \setminus S(f,k)$ follows then the binomial distribution ${\rm Bin} (2^{kn}, 2^{-\delta})$. In particular, using the Chernoff bound \eqref{e:Chernoff}, we infer that $p(k)$ decays at least exponentially in $k$. 
Therefore, by Borel-Cantelli, almost surely the set $S(k,f)$ has, for sufficiently large $k$, cardinality at least $2^{nk-1} = \frac{|\Lambda_k|}{2}$. Consider now 
\begin{equation}\label{e:lower-bound}
J (k,f) := \sum_{\lambda \in S(k,f)} 2^{k(p \alpha-n)} |c_{k,\lambda}|^p = \sum_{\lambda\in S(k,f)} 2^{kp (\alpha-s) -kn} |W_{k,\lambda}|^p\, .
\end{equation}
Recall that the $W_{k,\lambda}$ are independent identically distributed random variables with average $0$, variance $1$ and finite moments. Consider now the set 
\[
F (k_0) := \{f: |S(k,f)|\geq 2^{kn-1} \quad \forall k \geq k_0\}\, . 
\]
Using the independence of $B_{k,\lambda}$ and $W_{k, \lambda}$, Chebyshev's inequality gives  
\begin{align}
\mathbb P \left(  \left\{ f\in F(k_0) : \sum_{\lambda \in S (k,f)} |W_{k,\lambda}|^p < \frac{C_p}{2} |S (k,f)| \right\} \right) &\leq \frac{|S (k,f)| {\rm Var}\, (|W|^p)}{(C_p|S (k,f)|/2)^2} \label{e:Chebyshev}\\
&\leq \frac{\bar{C}}{|S (k,f)|} \leq \bar{C} 2^{-kn}\nonumber
\end{align}
(for an appropriate constant $\bar{C}$ depending only on $p$). 
In particular, again by the Borel-Cantelli Lemma, for $\mathbb P$-almost every $f$ the inequality
\[
 \sum_{\lambda \in S (k,f)} |W_{k,\lambda}|^p \geq C_p 2^{-(n-2)k}
\]
holds for all sufficiently large $k$. But, since $\bigcup_{k\geq k_0} F(k_0)$ is $\mathbb P$-almost all $L^1 (\mathbb T^n)$, the same holds for $\mathbb P$-a.e. $f\in L^1 (\mathbb T^n)$. Substituting in \eqref{e:lower-bound} we conclude that, almost surely, $J (k,f)$ is, for sufficiently large $k$'s, bounded from below by $c_p 2^{k (\alpha -s)}$ (where $c_p$ is a positive constant). Since by Theorem \ref{t:PL-wavelets} we have 
\[
\infty = \lim_{k\uparrow \infty} c_p 2^{k(\alpha-s)} \leq \lim_{k\uparrow \infty} J (k,f) \leq \|f\|_{\alpha, p} \leq C \|f\|_{W^{\alpha, p}}\, 
\]
for $\mathbb P$-almost every $f$.

\subsection{Proof of (iii)} Since $\mathbb T^n$ is bounded, $W^{\sigma, p} (\mathbb T^n) \subset W^{\sigma', p} (\mathbb T^n)$ for every $\sigma'< \sigma$. Therefore, it suffices to show that there is a function $\alpha\mapsto q(\alpha)$ defined on a suitable nonempty interval $(s_*,s)$ such that 
\begin{itemize}
\item[(a)] $\lim_{\alpha \uparrow s} q(s) = p$;
\item[(b)] for every $\alpha \in (s_*,s)$ $f\not\in W^{\alpha, q(\alpha)}$ almost surely.
\end{itemize}
We set $s_* = \max \{0, -\beta\}$ and 
\begin{equation}\label{e:choice-of-q}
q(\alpha) := p \, \frac{s + \beta}{\alpha + \beta} = \frac{\delta}{\alpha + \beta}\, .
\end{equation}
(a) is obvious, we therefore focus on $b$. Fix $\alpha\in (s_*,s)$ and $q=q(s)$. As in (ii), for every random $f$ we consider the set $L(k, f)= \Lambda_k\setminus S(k,f)$ of large outliers in $\lambda \in \Lambda_k$ to which the random variable $B_{k, \lambda}$ assigns the value $1$. Arguing as in (ii) and in particular using Chernoff bounds for the binomial distribution ${\rm Bin}\, (2^{kn}, 1- 2^{-\delta k})$ and Borel-Cantelli, we conclude that, almost surely, $|L (k, f)|\geq \frac{1}{2} 2^{(n-\delta) k}$ for all sufficiently large $k$. Consider now that 
\[
I (k, f) := \sum_{k\in L (k,f)} 2^{k(q\alpha-n)} |c_{k, \lambda}|^q 
\]
simplifies, thanks to our choice of $q$ and to the definition of $L(k,f)$, to
\[
I (k,f) = \sum_{k\in L (k,f)} 2^{k(\delta -n)} \sum_{k\in L (k,f)} |W_{k, \lambda}|^q\, .
\]
The same argument used for (ii), namely \eqref{e:Chebyshev} and Borel-Cantelli, implies that, if $C_q$ is the $q$-th moment of the random variable $W$, then, almost surely,  
\[
I (k,f) \geq 2^{k(\delta-n)} \frac{C_q}{2} |L (k,f)| \geq \frac{C_q}{4} 
\]
for all sufficiently large $k$. Again as for (ii), we invoke Theorem \ref{t:PL-wavelets} to conclude 
\[
\infty = \sum_k I (k,f) \leq C \|f\|_{W^{\alpha, q}}\, 
\] 
for $\mathbb P$-almost every $f$.

\section{Wavelets: multiresolution analysis and separation property}\label{a:multi-resolution}

In this section we will first give a digest of the construction of wavelets using multiresolution analysis (abbreviated as MRA). All the results are stated without proof and can be found in \cite{Dau92}. Then we will show, in Theorem \ref{t:separation}, that any basis of wavelets of mean-zero periodic $L^2$ functions, constructed as a tensorization of a one-dimensional MRA, satisfies the separation property (S) needed in Theorem \ref{t:random-level-sets}: for this claim we will provide a proof.

\subsection{MRA}\label{s:MRA}
We start by defining what an MRA is. To that respect, given any function $f$ over the real line, it is convenient to introduce the notation $f_{k,m}$ for the function
\[
f_{k,m} (x) = f (2^k x-m)\, .
\]
Even though the definition makes perfect sense for every pair of real numbers $k$ and $m$, in what follows both will always be taken among integers. 

\begin{definition}\label{d:MRA}
A MRA consists of a sequence of a countable family of closed subspaces $V_k\subset L^2(\R)$, indexed by $k\in \mathbb Z$, with the following properties:
\begin{itemize}
    \item[(i)] $V_{k-1}\subset V_k$ for every $k\in \mathbb Z$;
    \item[(ii)] $\overline{\cup_{k\in\Z}V_k}=L^2(\R)$;
    \item[(iii)] $\cap_{k\in\Z} V_k=\{0\}$;
    \item[(iv)] The spaces are related by dyadic scaling, namely $f\in V_k$ iff $f_{-k,0} \in V_0$;
    \item[(v)] $V_0$ is invariant under integer translations, namely if $f\in V_0$ then $f_{0,m}$ for all $m\in\Z$;
    \item[(vi)] There is a $\phi\in V_0$ such that $\{\phi_{0,m}\}_{m\in\Z}$ is an orthonormal basis for $V_0$. 
\end{itemize}
$\phi$ will be called the {\em scaling function}.
\end{definition}

With the following procedure it is possible to produce an ($L^\infty$ normalized) orthogonal wavelet basis using an MRA. Since $V_k\subset V_{k+1}$ is a closed subspace, there is an $L^2$ orthogonal complement $W_k$ of $V_k$ in $V_{k+1}$ and we can write $V_{k+1}=V_k\oplus W_k$.
The spaces $W_k$ are called the detailed spaces and it is clear from the definition that 
\[
L^2(\R)=\bigoplus_{k\in\Z}W_k
\]
and that 
\[
f\in W_k \quad \iff \quad f_{-k,0}\in W_0\, . 
\]
In fact, we can always find a $\chi$ such that $\{\chi_{0,m}\}_{m\in\Z}$ constitutes an orthonormal basis for $W_0$. This function $\chi$ will be called the {\em mother wavelet}. Thus, the set $\{\psi_{k,m};k\in\Z,m\in\Z\}$ forms an orthogonal basis for $L^2(\R)$.

Now, we use these functions to construct an orthogonal basis for $L^2(\mathbb T)$. Given a function $f$ on $\mathbb R$ we consider its periodization 
\begin{equation}\label{e:periodization}
f^{per}(x)=\sum_{\ell\in\Z}f(x-\ell)\, .
\end{equation}
If $f$ has sufficient decay at infinity, we can make sense of the right hand side \eqref{e:periodization} as an infinite series, but in fact we will periodize $\phi$ and $\chi$ and we will assume that their support is bounded, so that the corresponding sums in the right hand side of \eqref{e:periodization} will always have at most $N$ nonzero summands. An MRA with compactly supported functions $\phi$ and $\chi$ having $C^M$ regularity (for an arbitrary fixed $M\in \mathbb N$) is constructed in \cite{Dau88}.

It turns out that $\{\phi^{per}\}\cup\{\chi^{per}_{k,m}:k\ge 0, 0\le m\le 2^k-1\}$ forms an orthogonal basis for $L^2(\mathbb T)$. Moreover, one can show that $\phi^{per}\equiv 1$, so the family $\{\chi^{per}_{k,m}\}$ constitutes a basis for mean-zero functions on $\mathbb T$.

An higher dimension MRA can be obtained from a one-dimensional MRA using a suitable tensorization procedure. More precisely, for any $k$ consider the set $\lambda \in \Lambda_k$ which consists of any choice of a nonempty ordered $j$-tuple of integers $I :=\{1\leq i_1 < i_2 < \ldots < i_j \leq n\}$ and any choice of (not necessarily distinct) integers $M = \{m_1, \ldots , m_n\}$ with $m_\alpha \in \{0, \ldots, 2^k-1\}$. For any such $\lambda = (I, M)$ we define the function
\[
\psi_{k, \lambda} (x) = \prod_{i \in I} \chi^{per}_{k,m_i} (x_i) \prod_{\ell\not\in I} \phi^{per}_{k, m_\ell} (x_\ell)\, .
\]
Then the set $\{\psi_{k, \lambda}\}_{k\in \mathbb N, \lambda\in \Lambda_k}$ is an ($L^\infty$ normalized) orthogonal basis of wavelets for the subspace of mean zero functions of $L^2 (\mathbb T^n)$. 

\subsection{Separation property} We are now ready to show that wavelets constructed using tensorizations of MRAs satisfy the separation property (S) in Theorem \ref{t:random-level-sets}. 

\begin{theorem}\label{t:separation}
Consider an MRA for $L^2 (\mathbb R)$ with scaling function and mother wavelet which are both compactly supported and continuous. Let $\{\psi_{k, \lambda}\}$ be the orthogonal basis of wavelets of mean zero $L^2 (\mathbb T^n)$ obtained through the procedure described in Section \ref{s:MRA}. Then the basis satisfies the separation property (S) of Theorem \ref{t:random-level-sets}.
\end{theorem}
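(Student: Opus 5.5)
The plan is to reduce (S) to a one-dimensional statement about the mother wavelet $\chi$, use the tensor structure to handle the other coordinates, and exploit compact support to get the vanishing at $x'$. Assume $\operatorname{spt}\phi,\operatorname{spt}\chi\subset[0,L]$. Then, for scales with $L2^{-k}<\tfrac12$, each periodized function $\chi^{per}_{k,m}$ is supported in an arc of $\mathbb T$ of length $L2^{-k}$.

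\emph{Step 1: uniform lower bounds in one dimension.} For $\phi$, the identity $\phi^{per}\equiv 1$ (recalled in Section \ref{s:MRA}), applied after rescaling, says $\sum_m\phi(u-m)=1$. At most $L+1$ terms are nonzero, so $\max_m|\phi(u-m)|\geq (L+1)^{-1}$ for every $u$. For $\chi$ I would prove the following: there exist $K\in\mathbb N$ and $c>0$ such that for every $u\in\mathbb R$ there are $0\le j\le K$ and $m\in\mathbb Z$ with $|\chi(2^ju-m)|\ge c$. The function $u\mapsto \max_{j\le K}\max_m|\chi(2^ju-m)|$ is continuous and $1$-periodic. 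So, by compactness of $[0,1]$ and monotonicity in $K$, it suffices to show that no $u_0$ satisfies $\chi(2^ju_0-m)=0$ for all $j\ge0$ and $m$. Suppose such a $u_0$ existed. Let $P_J$ be the orthogonal projection onto $V_J$, with kernel $2^J\sum_m\phi(2^Ju-m)\phi(2^Jv-m)$. Since $P_J=P_0+\sum_{0\le j<J}Q_j$, where $Q_j$ has kernel built from the $\chi_{j,m}$, we would get $P_Jf(u_0)=P_0f(u_0)$ for every $J$ and every $f\in C_c$. On the other hand the kernel of $P_J$ is an approximate identity concentrated at scale $2^{-J}$ with $\int 2^J\sum_m\phi(2^Ju-m)\phi(2^Jv-m)\,dv=\sum_m\phi(2^Ju-m)\int\phi=1$, using $\int\phi=1$, which follows from the partition of unity. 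Hence $P_Jf(u_0)\to f(u_0)$. Choose $f$ continuous with $f(u_0)=1$ and $\|f\|_{L^1}$ tiny, so that $|P_0f(u_0)|\le C\|f\|_{L^1}<1$. This is a contradiction.

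\emph{Step 2: choosing scale and index.} Let $x\neq x'$, put $d=d(x,x')$, and pick a coordinate $i$ with torus distance $|x_i-x'_i|\ge d/\sqrt n$. Choose $k_0$ with $L2^{-k_0}<d/\sqrt n\le 2L2^{-k_0}$; if $d$ is of order $1$, finitely many small scales are handled by shifting $k_0$ by a bounded amount. Apply Step 1 to $u=2^{k_0}x_i$ to get $j\le K$ and $m_i$ with $|\chi^{per}_{k,m_i}(x_i)|\ge c$, where $k=k_0+j$. For each $\ell\ne i$, use the bound on $\phi$ at scale $k$ to choose $m_\ell$ with $|\phi^{per}_{k,m_\ell}(x_\ell)|\ge (L+1)^{-1}$. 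With $I=\{i\}$ and $\lambda=(I,M)$ we get $|\psi_{k,\lambda}(x)|\ge c(L+1)^{1-n}$. Moreover $\chi^{per}_{k,m_i}$ is supported in an arc of length $L2^{-k}\le L2^{-k_0}<|x_i-x'_i|$ that contains $x_i$, so $\chi^{per}_{k,m_i}(x_i')=0$ and therefore $\psi_{k,\lambda}(x')=0$. Finally $2^{-k}$ is comparable to $d$ up to the constant $2^K\cdot 2L\sqrt n$, which gives \eqref{e:separation}.

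The main obstacle is Step 1 for $\chi$. A single scale need not work, because a continuous compactly supported wavelet could in principle have all its integer translates vanish at some point; this is why the argument must allow the bounded range of scales $j\le K$ and needs the reproducing-kernel argument. The delicate point is justifying $P_Jf(u_0)\to f(u_0)$ pointwise at the specific point $u_0$. This follows from uniform boundedness and compact support of the kernels together with continuity of $f$, and I expect it to be routine once written out.
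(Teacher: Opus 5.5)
Your proof is correct. The second half is the paper's argument:
\begin{itemize}
\item the bound $\max_m|\phi(u-m)|\ge (L+1)^{-1}$ from $\sum_m\phi(u-m)=1$;
\item a coordinate $i$ with $d(x_i,x_i')\ge d/\sqrt n$;
\item a scale $2^{-k_0}$ comparable to $d$;
\item vanishing at $x'$, because the support of $\chi^{per}_{k,m_i}$ is an arc of length $L2^{-k}<d(x_i,x_i')$.
\end{itemize}
The routes differ only in the one-dimensional lemma that $|\chi(2^ju-m)|\ge c$ for some $j\le K$ and $m\in\mathbb Z$.

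The paper proves this lemma directly. It expands $\sin 2\pi x$ and $\cos 2\pi x$ in the periodic wavelet basis. These are mean-zero, so only the $\chi^{per}_{k,m}$ enter, and they have no common zero. It then invokes a result of Daubechies on uniform convergence of wavelet expansions of continuous functions. It truncates at a scale where one of the two partial sums is $\ge\sqrt2/4$ everywhere, and reads off $c$ from the finitely many coefficients involved.

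You argue instead by contradiction. Compactness and monotonicity in $K$ reduce the lemma to excluding a single point $u_0$ at which all $\chi(2^ju_0-m)$, $j\ge0$, vanish. At such a point $P_Jf(u_0)=P_0f(u_0)$ for all $J$. You play this against the approximate-identity property of the explicit kernel of $P_J$: support of width $\lesssim 2^{-J}$, size $\lesssim 2^J$, integral $1$ since $\int\phi=1$. A bump $f$ with $f(u_0)=1$ and small $L^1$ norm then forces $|P_0f(u_0)|<1$, which is the contradiction.

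In effect you prove, at one point and from scratch, the kernel estimate behind the cited uniform convergence theorem. You also handle the constant mode by working on $\mathbb R$ and bounding $P_0f(u_0)$ by $\|f\|_{L^1}$, rather than by restricting to mean-zero test functions. Your argument is more self-contained but non-constructive in $K$ and $c$. The paper's argument is direct, and its constants are in principle computable from the expansions of $\sin$ and $\cos$.

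Finally, your caveat about $d$ of order $1$ is unnecessary. The identity $\sum_m\phi(\cdot-m)\equiv1$ forces $L\ge1$, and $d\le\sqrt n/2$ on $\mathbb T^n$. So your $k_0$ automatically satisfies $k_0\ge2$ and $L2^{-k}<\tfrac12$.
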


First of all we record the following lemma.

\begin{lemma}\label{l:separation}
Consider an MRA for $L^2 (\mathbb R)$ with compactly supported continuous mother wavelet $\chi$. Then there exists constants $c>0$ and $k_0\in \mathbb N$ with the following property. For every $x\in\R$, there exists a $0\le k\le k_0$ and an integer $j\in\Z$ such that $|\chi (2^kx-j)|\ge c$.
\end{lemma}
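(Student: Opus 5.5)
We reduce to a covering of one period. Since $\chi$ is continuous and not identically zero (it spans, together with its integer translates, the nontrivial space $W_0$), there is a point $t_0$ and a constant $c>0$ with $|\chi|\geq 2c$ on an open interval $J=(t_0-\eta,t_0+\eta)$. The conclusion says that for each $x$ there are $0\le k\le k_0$ and $j\in\Z$ with $2^kx-j\in\{|\chi|\ge c\}$. This condition is invariant under $x\mapsto x+1$: replace $j$ by $j+2^k$. Hence it suffices to treat $x\in[0,1]$.

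For each fixed $k$ we define the open set
\[
U_k:=\bigl\{x\in\R:\ \exists j\in\Z,\ 2^kx-j\in J\bigr\}=\bigcup_{j\in\Z}2^{-k}(J+j).
\]
It is $2^{-k}$-periodic. Each of its components has length $2^{-k}\cdot 2\eta$, and these components are spaced $2^{-k}$ apart.

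The key step is to show that $\bigcup_{k\ge0}U_k\supset\R$. Equivalently, for each $x$ we need some $k\ge 0$ such that the fractional part $\{2^kx\}$ lies in $\{J\}$ modulo $1$, that is, $2^kx$ mod $1$ lies in the projection of $J$ to $\R/\Z$. This is exactly where a naive argument fails. The doubling orbit of a point such as $x=0$ stays at $0$, and the orbit of a point of period two in the doubling map never enters a small arc.

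To avoid this, we use more than one interval. Orthonormality of $\{\chi_{0,m}\}$ together with compact support says that $\chi$ cannot vanish on all of the relevant positions. The cleanest route is to show that the set $N:=\{t:\chi(t+j)=0\ \forall j\in\Z\}$ is empty, i.e.\ that $\sum_j|\chi(t+j)|^2>0$ for every $t$. Given that, continuity and periodicity of $t\mapsto\max_j|\chi(t+j)|$ (a finite max on compact sets, by compact support) give a uniform lower bound $2c$. In that case we may even take $k_0=0$.

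If $N$ could be nonempty, we would instead work with the function
\[
F(x):=\max_{0\le k\le k_0}\max_j|\chi(2^kx-j)|.
\]
It is continuous and $1$-periodic, so by compactness it suffices to show that for each $x\in[0,1]$ some $k$ gives a nonzero value. Then finitely many $k$ suffice locally, and a finite subcover of $[0,1]$ yields a uniform $k_0$ and $c$.

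The main obstacle is ruling out a point $x$ with $\chi(2^kx-j)=0$ for all $k\ge0$ and all $j$. For this we would use the refinement relation
\[
\phi=\sum_m h_m\phi(2\cdot-m)
\]
together with the partition-of-unity property $\sum_j\phi(t-j)\equiv1$ (which holds since $\phi^{per}\equiv1$). The aim is to show that such an $x$ forces every element of $\bigoplus_{k\geq 0}W_k$, restricted near $x$, to vanish, contradicting density, through
\[
\overline{V_0\oplus\bigoplus_{k\ge0}W_k}=L^2(\R).
\]
Concretely, on the set of such points all functions in the closed span would have constrained values. We would first attempt the direct contradiction via the pointwise identity $\sum_j|\chi(t-j)|^2$ and its Fourier expression $\sum_\ell|\hat\chi(\xi+2\pi\ell)|^2=1$, which we expect to be the delicate point.
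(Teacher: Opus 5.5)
\textbf{There is a genuine gap.} Your reduction is correct. The condition is invariant under $x\mapsto x+1$, each map $x\mapsto\max_j|\chi(2^kx-j)|$ is continuous, and a finite subcover of $[0,1]$ upgrades a pointwise statement to a uniform one. But the pointwise statement itself, namely that for every $x$ there are $k\ge 0$ and $j\in\Z$ with $\chi(2^kx-j)\neq 0$, is the entire content of the lemma, and you leave it unproved.

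Your first route ($N=\emptyset$, hence $k_0=0$) has no justification and need not hold. Orthonormality of the translates, i.e. $\sum_\ell|\hat\chi(\xi+2\pi\ell)|^2=1$, encodes only the correlations $\int\chi(t)\overline{\chi(t-j)}\,dt=\delta_{j0}$. It tells you only that the periodic function $t\mapsto\sum_j|\chi(t+j)|^2$ has integral $1$ over a period; it gives no pointwise lower bound and does not exclude zeros.

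Your second route also fails as sketched. A point $x$ with $\chi(2^kx-j)=0$ for all $k\ge 0$, $j\in\Z$, makes the wavelets of scale $\ge 0$ vanish at the single point $x$, not in a neighbourhood of it. Such a constraint can never contradict the $L^2$ density $\overline{V_0\oplus\bigoplus_{k\ge0}W_k}=L^2(\R)$, because point evaluation is not continuous in $L^2$. The refinement equation and the partition of unity do not change this.

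The missing ingredient is a convergence statement stronger than $L^2$: the wavelet expansion of a continuous periodic function converges uniformly. This is the result from Daubechies' book that the paper invokes; for compactly supported continuous $\phi$ it also follows because the kernels of the projections onto $V_K$ form an approximate identity.

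With this, the paper proves the lemma directly. Expand the mean-zero functions $\sin 2\pi x$ and $\cos 2\pi x$, for which $\max\{|\sin 2\pi x|,|\cos 2\pi x|\}\ge\sqrt2/2$, in the periodized basis $\chi^{per}_{k,m}$. Uniform convergence gives a level $k_0$ such that, for every $x$, one of the two truncated sums has modulus at least $\sqrt2/4$. Hence some $|\chi^{per}_{k,m}(x)|$ with $k\le k_0$ is bounded below by a fixed constant. Since each periodization has at most $N$ nonzero summands, this passes to $|\chi(2^kx-j)|\ge c$. This yields $k_0$ and $c$ at once, with no separate compactness step.

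Alternatively, you can keep your structure and use the same ingredient only for the pointwise claim. If all $\chi(2^kx-j)$ with $k\ge 0$ vanished, every periodized wavelet would vanish at $x$. Then the uniformly convergent expansion of the mean-zero function $\cos(2\pi(\cdot-x))$, evaluated at $x$, would give $1=0$. Your compactness argument then finishes the proof.
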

\begin{proof}
We know that the periodized functions $\psi_{k,m} := \chi^{per}_{k,m}$, $k\in \mathbb N$ and $0\le m\le 2^k-1$
form an orthogonal basis of the space of mean-zero 1-periodic $L^2$ functions. Choose the functions $f_1=\sin 2\pi x$ and $f_2=\cos 2\pi x$. Since $f_1^2+f_2^2=1$ we must have
$\max \{|f_1|, |f_2|\} \ge \frac{\sqrt{2}}{2}$. We next expand each in series of wavelets (which is possible because both functions have mean zero)
\[
f_i := \sum_{k=0}^\infty \sum_{0\leq m \leq 2^k-1} a_{k,m , i} \psi_{k, m}\, .
\]
Given the smoothness of $\sin$ and $\cos$, it is immediate to see that their wavelet expansions converge uniformly (in fact the uniform convergence holds for expansions of merely continuous functions, cf. \cite[Theorem 9.3.1]{Dau92}).
In particular we can find a $k_0\in \mathbb N$ with the property that, for every $x\in \mathbb T$, the inequality
\[
\sum_{k=0}^{k_0} \sum_{0\leq m \leq 2^k-1} |a_{k, m , i}| |\psi_{k, m} (x) |
\geq \left|\sum_{k=0}^{k_0} \sum_{0\leq m \leq 2^k-1} a_{k,m, i} \psi_{k, m} (x)\right| 
\geq \frac{\sqrt{2}}{4}\, 
\]
holds for at least one $i\in \{1,2\}$. Since this is the sum of $2^{k_0+1}-1$ numbers, if we denote by $M_0$ the maximum of the coefficients $|a_{k,m}|$ appearing in the finite sum and set $M= (2^{k_0+1} -1) M_0$, we conclude that, for every $x$, there is a $k\in \{0, \ldots , k_0\}$ and an $m\in \{0, \ldots, 2^{k-1}\}$ such that 
\[
|\psi_{k,m} (x)| \geq \frac{\sqrt{2}}{4M}\, .
\]
Recall that 
\[
\psi_{k,m} = \chi^{per}_{k,m} (x) = \sum_{\ell \in \mathbb Z} \chi (2^k (x-m) -\ell)\, ,
\]
and that for each $x$, $k$, and $m$ the number of $\ell$'s which are nonzero in the above summation is bounded by a constant $N$ independent of $x$, $k$, and $m$. This means that the inequality 
$|\chi (2^k x - (\ell + 2^k m))| \geq \frac{\sqrt{2}}{4MN}$ must hold for some choice of $\ell \in \mathbb Z$. Setting $c:= \frac{\sqrt{2}}{4MN}$ the claim of the lemma follows readily. 
\end{proof}

We are now ready to prove Theorem \ref{t:separation}. 

\begin{proof}[Proof of Theorem \ref{t:separation}]
Consider $x,y\in \mathbb T^d$ distinct. Identifying $\mathbb T^n$ with $\mathbb R^n/\mathbb Z^n$ we fix points $x_i, y_i\in [0,1)$ with the property that $x = (x_1, \ldots, x_n) + \mathbb Z^n$ and $y = (y_1, \ldots , y_n) + \mathbb Z^n$. Let $L$ be the maximum of the diameter of the support of $\chi$ and $\phi$ (the scaling and mother wavelet). Choose the integer $\bar{k}$ with the property that 
\begin{equation}\label{e:pick-max}
\frac{d(x,y)}{2\sqrt{n}} \leq 2^{-{\bar{k}}} L < \frac{d (x,y)}{\sqrt{n}}\, .
\end{equation}
We set $d(x_j, y_j) := \min \{ |x_j-y_j-m|: m \in \mathbb Z\}$ and 
pick a coordinate $i$ with the property that 
\[
d(x_i, y_i) = \max \{ d(x_j, y_j) : j\in \{1, \ldots , n\}\} \geq \frac{d(x,y)}{\sqrt{n}}
\]
We apply Lemma \ref{l:separation} to the point $\bar{x} = 2^{\bar k} x_i$ and we find, in particular, a $k\in \{0, k_0\}$ and an integer $\ell$ with the property that $|\chi (2^k \bar{x} - \ell)| \geq c$. Observe that, therefore
\begin{equation}\label{e:comparison}
\left|\chi \left(2^{\bar{k}+k} x_i - \ell\right)\right|\geq c\, .
\end{equation}
However \eqref{e:pick-max} implies also that $2^{-\bar{k}-k} L \leq 2^{-\bar{k}} L <1$. This means that, if we let $m_i \in \{0, \ldots, 2^{\bar k+k}-1\}$ be such that $m_i \equiv \ell \; mod \; (2^{\bar k+k})$, then  
\begin{equation}\label{e:first-bound}
|\chi^{per}_{\bar{k}+k, m_i} (x_i)| = |\chi (2^{\bar{k}+k} x_i - \ell)|\geq c\, .
\end{equation}
If we consider $\chi^{per}_{\bar{k}+k, m_i}$ as a function on the torus $\mathbb T$, this is $\psi_{\bar{k}+k, m_i}$ and the fact that $2^{-\bar{k}-k} L \leq 2^{-\bar{k}} L <1$ implies that the support of $\psi_{\bar{k}+k, m_i}$ (in $\mathbb T$) has the same diameter as the support of the function $\chi_{\bar{k}+k, m_i}$, which is at most $2^{-(k+\bar{k})} L$. Since the latter is strictly smaller than $d (x_i, y_i)$, we conclude then that 
\begin{equation}\label{e:vanishes}
|\chi^{per}_{\bar{k}+k, m_i} (y_i)|=0\, .
\end{equation}
We now claim, for each $j\neq i$, we can choose an apprioriate $m_j\in \{0, \ldots, 2^{\bar{k}+k}-1\}$ with the property that 
\begin{equation}\label{e:second-bound}
|\phi^{per}_{\bar{k}+k, m_j} (x_j)|\geq \bar{c}\, ,
\end{equation}
where $\bar{c}$ is a universal constant. Given that also the support of $\phi_{\bar{k}+k,0}$ has diameter smaller than $1$, this is equivalent to find an integer $\mu_j$ with the property that 
\[
|\phi (2^{\bar{k}+k} x_j + \mu_j)|\geq \bar{c}\, .
\]
But then the statement is equivalent to 
\begin{itemize}
\item[(P)] there is a constant $\bar{c}$ with the property that, for every $z\in \mathbb R$ there is an integer $\mu$ with $|\phi (z+\mu)|\geq \bar{c}$.  
\end{itemize}
Recall that 
\[
\sum_{\mu \in \mathbb Z} |\phi (z+\mu)| \geq \sum_{\mu \in \mathbb Z} \phi (z+\mu) = 1\, .
\]
On the other hand, given that the support of $\phi$ has diameter no larger than $L$, the number of summands in the left hand side which are positive is an integer smaller than $L+1$. In particular this shows that (P) holds if we choose $\bar{c} := \frac{1}{L+1}$. 

Consider now the wavelet $\psi_{\bar{k}+k, \lambda}$ given by 
\[
\psi_{\bar{k}+k, \lambda} (z) := \chi^{per}_{\bar{k}+k, m_i} (z_i) \prod_{j\neq i} \phi^{per}_{\bar{k}+k, m_j} (z_j)\, .
\]
Clearly \eqref{e:first-bound} and \eqref{e:second-bound} give $|\psi_{\bar{k}+k, m} (x)| \geq \bar{c} c^{n-1} >0$. Moreover \eqref{e:vanishes} gives $\psi_{\bar{k}+k, m} (y)= 0$. Finally, consider that $k\leq k_0$, where $k_0$ is the constant in Lemma \ref{l:separation}. The latter fact and \eqref{e:comparison} imply that 
\[
C^{-1} d(x,y) \leq 2^{-\bar{k}-k} \leq C d (x,y)\, ,
\]
for a constant $C$ which is independent of $x$ and $y$. 
\end{proof}

\providecommand{\bysame}{\leavevmode\hbox to3em{\hrulefill}\thinspace}
\providecommand{\MR}{\relax\ifhmode\unskip\space\fi MR }
\providecommand{\MRhref}[2]{%
  \href{http://www.ams.org/mathscinet-getitem?mr=#1}{#2}
}
\providecommand{\href}[2]{#2}


\end{document}